\newtheorem{theorem}{Theorem}[section]
\newtheorem{lemma}[theorem]{Lemma}
\newtheorem{proposition}[theorem]{Proposition}
\newtheorem{corollary}[theorem]{Corollary}
\newtheorem*{theoremA}{Theorem A}
\newtheorem*{theoremB}{Theorem B}
\newtheorem*{theoremC}{Theorem C}
\theoremstyle{definition}
\newtheorem*{notation}{Notation}
\theoremstyle{remark}
\newcommand{\PSL}{{\mathrm {PSL}}}
\newcommand{\PSU}{{\mathrm {PSU}}}
\newcommand{\Sp}{{\mathrm {Sp}}}
\newcommand{\PSp}{{\mathrm {PSp}}}
\newcommand{\Schur}{{\mathrm {Schur}}}
\newcommand{\Mult}{{\mathrm {Mult}}}
\newcommand{\Irr}{{\mathrm {Irr}}}
\newcommand{\St}{{\mathrm {St}}}
\newcommand{\CC}{{\mathbb C}}
\newcommand{\NN}{{\mathbb N}}
\newcommand{\ta}{\hspace{0.5mm}^{2}\hspace*{-0.2mm}}
\newcommand{\AAA}{ \hat{\textup{\textsf{A}}}_n}
\newcommand{\SSS}{\hat{\textup{\textsf{S}}}_n}
\newcommand{\Al}{\textup{\textsf{A}}}
\newcommand{\Sy}{\textup{\textsf{S}}}
\newcommand{\cd}{{\mathrm {cd}}}
\newcommand{\bC}{{\mathbf{C}}}
\newcommand{\bZ}{{\mathbf{Z}}}
\newcommand{\la}{\lambda}
\newcommand{\fch}{\operatorname{fch}}
\newcommand{\frb}{\operatorname{frb}}
\begin{document}

\title[The double covers of the symmetric and alternating groups]
{Complex group algebras of the double covers\\ of the symmetric and
alternating groups}

\author[C.\ Bessenrodt]{Christine Bessenrodt}
\address{Institut f\"ur Algebra, Zahlentheorie und Diskrete Mathematik,
Leibniz Universit\"at Hannover,
D-30167 Hannover, Germany} \email{bessen@math.uni-hannover.de}

\author[H.\,N. Nguyen]{Hung Ngoc Nguyen}
\address{Department of Mathematics, The University of Akron, Akron,
Ohio 44325, United States} \email{hungnguyen@uakron.edu}

\author[J.\,B. Olsson]{J{\o}rn B. Olsson}
\address{Department of Mathematical Sciences, University of Copenhagen, DK-2100 Copenhagen \O,
Denmark} \email{olsson@math.ku.dk}

\author[H.\,P. Tong-Viet]{Hung P. Tong-Viet}
\address{Department of Mathematics and Applied Mathematics,
University of Pretoria,
Private Bag X20, Hatfield, Pretoria 0028,
South Africa}
\email{Hung.Tong-Viet@up.ac.za}

\thanks{H.\,N.~Nguyen was partially supported by the NSA Young
  Investigator Grant \#H98230-14-1-0293 and a Faculty Scholarship
  Award from Buchtel College of Arts and Sciences, The University
  of Akron. Tong-Viet's work is based on the research supported in part by the National Research Foundation of South Africa (Grant Number 93408).}

\subjclass[2010]{Primary 20C30, 20C15; Secondary 20C33}

\keywords{Symmetric groups, alternating groups, complex group
algebras, Schur covers, double covers, irreducible representations,
character degrees}

\date{February 24, 2015}

\begin{abstract} We prove that the double covers of the alternating and symmetric
groups are determined by their complex group algebras. To be more
precise, let $n\geq 5$ be an integer, $G$ a finite group, and let
$\AAA$ and $\SSS^\pm$ denote the double covers of $\Al_n$ and
$\Sy_n$, respectively. We prove that $\CC G\cong \CC \AAA$ if and
only if $G\cong \AAA$, and $\CC G\cong \CC \SSS^+\cong\CC\SSS^-$ if
and only if $G\cong \SSS^+$ or $\SSS^-$. This in particular
completes the proof of a conjecture proposed by the second and
fourth authors that every finite quasi-simple group is determined uniquely
up to isomorphism by the structure of its complex group algebra. The
known results on prime power degrees and relatively small degrees of
irreducible (linear and projective) representations of the symmetric
and alternating groups together with the classification of finite
simple groups play an essential role in the proofs.
\end{abstract}

\maketitle


\section{Introduction}
The complex group algebra of a finite group $G$, denoted by $\CC G$,
is the set of formal sums $\{\sum_{g\in G}a_gg\mid a_g\in \CC\}$
equipped with natural rules for addition, multiplication, and scalar
multiplication. Wedderburn's theorem implies that $\CC G$ is
isomorphic to the direct sum of matrix algebras over $\CC$ whose
dimensions are exactly the degrees of the (non-isomorphic)
irreducible complex representations of $G$. Therefore, the study of
complex group algebras and the relation to their base groups is
important in group representation theory.

In an attempt to understand the connection between the structure of
a finite group and its complex group algebra, R.~Brauer asked in his
landmark paper~\cite[Question~2]{Brauer}: \emph{when do
non-isomorphic groups have isomorphic complex group algebras}? Since
this question might be too general to be solved completely, it is
more feasible to study more explicit questions/problems whose
solutions will provide a partial answer to Brauer's question. For
instance, if two finite groups have isomorphic complex group
algebras and one of them is solvable, is it true that the other is
also solvable? Or, if two finite groups have isomorphic complex
group algebras and one of them has a normal Sylow $p$-subgroup, can
we conclude the same with the other group? We refer the reader to
Brauer's paper~\cite{Brauer} or Section~9 of the survey
paper~\cite{Navarro} by G.~Navarro for more discussions on complex
group algebras.

A natural problem that arises from Brauer's question is the
following: given a finite group $G$, determine all finite groups (up
to isomorphism) such that their complex group algebra is isomorphic to that of~$G$.
This problem is easy for abelian groups but difficult for
solvable groups in general. If $G$ is any finite abelian group of
order $n$, then $\CC G$ is isomorphic to a direct sum of $n$ copies
of $\CC$, so that the complex group algebras of any two abelian
groups are isomorphic if and only if the two groups have the same
order. For solvable groups, the probability that two groups have
isomorphic complex group algebra is often fairly `high'. For
instance, B.~Huppert pointed out in~\cite{Huppert1} that among 2328
groups of order~$2^7$, there are only 30 different structures of
complex group algebras. In contrast to solvable groups, simple
groups or more generally quasi-simple groups seem to have a stronger
connection with their complex group algebras.
In~\cite{Nguyen-TongViet}, two of the four authors have conjectured
that every finite quasi-simple group is determined uniquely up to
isomorphism by its complex group algebra, and proved it for all
quasi-simple groups, except the non-trivial perfect central covers
of the alternating groups.

Let $\Al_n$ denote the alternating group of degree $n$. (Throughout
the paper we always assume that $n\geq5$ unless otherwise stated.)
The \emph{Schur covers} (or \emph{covering groups}) of the
alternating groups as well as symmetric groups were first studied
and classified by I.~Schur~\cite{Schur} in connection with their
projective representations. It is known that $\Al_n$ has one
isomorphism class of Schur covers, which is indeed the double cover
$\AAA$ except when $n$ is 6 or 7, where triple and $6$-fold covers
also exist. We are able to prove that every double cover of an
alternating group of degree at least $5$ is determined uniquely by
its complex group algebra.

\begin{theoremA}\label{theorem_alternating} Let $n\geq5$. Let $G$ be a finite group
and $\AAA$ the double cover of $\Al_n$. Then $G\cong \AAA$ if and
only if $\CC G\cong \CC\AAA$.
\end{theoremA}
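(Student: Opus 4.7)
The hypothesis $\CC G\cong \CC\AAA$ furnishes two pieces of data that drive the entire argument: $|G|=|\AAA|=n!$, and an identification of the multisets of complex irreducible character degrees of $G$ and $\AAA$. The plan is to reverse-engineer the structure of $G$ from this data in three stages, in the spirit of the general strategy of \cite{Nguyen-TongViet}.

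\emph{First stage: initial reductions and socle identification.} Since $\Al_n$ is perfect for $n\ge 5$, the double cover $\AAA$ is perfect and therefore has a unique linear character; hence $G'=G$. Next I would locate the nonabelian composition factors of $G$. Using that $\AAA$ possesses irreducible (spin) characters of prime-power degree together with a collection of characters of very small degree, and combining these with It\^o--Michler, with Isaacs--Seitz-type constraints on groups having a character of prime-power degree, and with the classification of finite simple groups that possess an irreducible character of small or prime-power degree, I would eliminate every candidate for a nonabelian composition factor of $G$ other than $\Al_n$; the arithmetic constraint $|G|=n!$, in particular that no prime $p>n$ divides $|G|$, prunes the list of simple candidates drastically.

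\emph{Second stage: identifying $G$.} At this point $G$ must contain a subnormal quasi-simple subgroup $L$ that is a central cover of $\Al_n$, namely $\Al_n$ itself, the double cover $\AAA$, or, for $n\in\{6,7\}$, one of the $3$- or $6$-fold covers. Comparing orders with $|G|=n!$ leaves only $L\cong\AAA=G$---which directly gives the theorem---or $L\cong\Al_n$ with $[G:L]=2$. In the latter case $G\cong \Sy_n$ or $G\cong \Al_n\times \ZZ/2\ZZ$. The first is excluded because $G$ is perfect, and the second because its character-degree multiset has each degree of $\Al_n$ appearing with doubled multiplicity, which cannot coincide with the degree multiset of $\AAA$: the nontrivial half of $\Irr(\AAA)$ consists of the faithful spin characters, whose degrees do not replicate those of $\Al_n$ in the required way.

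The main obstacle is the first stage: the degree multiset of $\AAA$ mixes the linear degrees (those of $\Al_n$) with the faithful spin degrees, and one must use the prime-power-degree and small-degree information for \emph{both} families, together with the classification of finite simple groups, to rule out every other almost-simple section compatible with $|G|=n!$. This is precisely where the extensive degree classifications for the symmetric and alternating groups and their double covers---emphasised in the abstract---do the heavy lifting.
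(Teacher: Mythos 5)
Your two-stage plan matches the skeleton of the paper's argument: deduce $G=G'$ from the fact that $\AAA$ has a unique linear character, take a maximal normal subgroup $M$ so that $G/M\cong S$ is nonabelian simple with $\cd^\ast(S)\subseteq\cd^\ast(\AAA)$ and $|S|\mid n!$, prove $S\cong\Al_n$ via CFSG, and then conclude $G\cong\AAA$ from $|M|=2$ and $M\subseteq\bZ(G)\cap G'$. One small inefficiency in your second stage: once $G$ is known to be perfect, both $\Sy_n$ and $\Al_n\times C_2$ are excluded outright (neither is perfect), so the separate degree-multiset argument you give for $\Al_n\times C_2$ is superfluous.

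The genuine gap is in your first stage, which you rightly flag as the main obstacle but then leave unaddressed, and the tools you name (It\^o--Michler, Isaacs--Seitz-type constraints on prime-power-degree characters) are neither the ones the paper uses for Theorem~A nor sufficient to close it. The paper eliminates a simple group of Lie type $S$ in characteristic $p$ via the Steinberg character: $|S|_p\in\cd(\AAA)$ forces $|S|_p$ to be one of the very few prime-power degrees of $\AAA$ classified in Lemmas~\ref{prime power degrees of An} and~\ref{prime power degrees of Schur cover of An}. For $p$ odd this yields $|S|_p=n-1=d_1(\AAA)$, contradicting $d_1(S)<|S|_p$. But for $p=2$ it only pins $|S|_2$ near $2^{\lfloor(n-2)/2\rfloor}$, and the decisive further idea---entirely absent from your sketch---is a comparison of $|\Irr_{2'}(\cdot)|$: a Lie type group of rank $l$ over $\mathbb{F}_{2^k}$ has at least roughly $2^{kl}$ odd-degree irreducible characters (Lemma~\ref{number of odd degree characters}, by counting semisimple classes of the dual group), whereas $\AAA$ has at most $2^{k_1+\cdots+k_t}$ where $n=2^{k_1}+\cdots+2^{k_t}$ is the binary expansion (Lemma~\ref{number of odd degree characters of Sn}, from Macdonald's formula, since all spin degrees are even). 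Playing these bounds against the constraint on $|S|_2$ forces $n$ into a finite range (Lemma~\ref{technical lemma for PSL and PSU -Theorem A}), after which a computer search on low-degree characters and divisibility finishes the job (Proposition~\ref{proposition-alternating2}). Without this $2'$-character count, or some substitute for it, your elimination of even-characteristic Lie type groups does not go through.
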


We prove a similar result for the triple and $6$-fold (perfect
central) covers of $\Al_6$ and~$\Al_7$, and therefore complete the
proof of the aforementioned conjecture.

\begin{theoremB}\label{conjecture} Let $G$ be a finite group
and $H$ a quasi-simple group. Then $G\cong H$ if and only if $\CC
G\cong \CC H$.
\end{theoremB}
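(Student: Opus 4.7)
The plan is to derive Theorem B from Theorem A together with the previously established cases of the conjecture. By the main result of the second and fourth authors in~\cite{Nguyen-TongViet}, every finite quasi-simple group is determined by its complex group algebra with the sole exception of the nontrivial perfect central covers of the alternating groups. Since $\Mult(\Al_n)\cong\ZZ/2\ZZ$ for all $n\geq 5$ with $n\notin\{6,7\}$, while $\Mult(\Al_6)\cong\Mult(\Al_7)\cong\ZZ/6\ZZ$, the open quasi-simple targets are the double covers $\AAA$ for $n\geq 5$---which are settled by Theorem A---and the four exceptional covers $3.\Al_6$, $6.\Al_6$, $3.\Al_7$, $6.\Al_7$, of orders $1080$, $2160$, $7560$, and $15120$ respectively. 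Thus the proof of Theorem B reduces to treating these four finite groups.

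For each such $H$, suppose $\CC G\cong\CC H$. Then $G$ and $H$ share the same order (the sum of the squares of the irreducible degrees) and the same multiset of irreducible character degrees. Because $H$ is perfect, it has a unique linear character; hence so does $G$, so $G$ is itself perfect. The strategy is then to classify perfect groups of the relevant (small) order whose degree multiset agrees with that of $H$. Using CFSG together with the low-degree character theory of finite simple groups catalogued in the references of the paper, one eliminates every candidate non-abelian composition factor other than $\Al_6$ (when $|H|\in\{1080,2160\}$) or $\Al_7$ (when $|H|\in\{7560,15120\}$), by observing that the smallest nontrivial character degree of any alternative simple group would fail to appear in the multiset of degrees of $H$.

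Once the composition factor is identified, $G$ is a perfect central extension of $\Al_n$ by an abelian group of the appropriate order, and the type of extension is pinned down by the fine structure of the faithful degrees: for instance, the pair of faithful three-dimensional irreducible characters of $3.\Al_6$ and their analogues for the other three covers appear in no other central extension. This matches $G$ to $H$ in each case and completes the analysis.

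The main obstacle is expected to be the two $6$-fold covers $6.\Al_6$ and $6.\Al_7$. Here $Z(H)$ is not of prime order, so the multiset of degrees mingles contributions from characters factoring through each of the three nontrivial proper central quotients of $H$. Competing candidates---in particular various central products involving $2.\Al_n$ and $3.\Al_n$---must be analysed and either identified with the Schur cover itself or ruled out by a sharper comparison of the faithful degree multisets. Since the Schur cover of $\Al_n$ for $n\in\{6,7\}$ is unique up to isomorphism, once $|G|$, perfectness, and the composition factor $\Al_n$ are in hand, the matching of the faithful degree multisets forces $G\cong H$.
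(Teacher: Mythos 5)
Your top-level reduction is exactly the paper's: Theorem~B is deduced from Theorem~A, the Nguyen--Tong-Viet result that settles all quasi-simple groups except nontrivial perfect central covers of alternating groups, and a separate treatment of the exceptional covers of $\Al_6$ and $\Al_7$. The paper's proof of Theorem~B is in fact just a citation of Theorem~A, Theorem~\ref{theorem-A6andA7}, and \cite[Corollary~1.4]{Nguyen-TongViet}; what you are sketching in the second half of your proposal is the content of Theorem~\ref{theorem-A6andA7}.

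There the routes diverge somewhat. For the $\Al_6$ covers the paper does not run the argument you describe at all: it observes $\Al_6\cong\PSL_2(9)$, so its perfect central covers are quasi-simple \emph{classical} groups and the result is already in \cite[Theorem~1.1]{Nguyen}. For the $\Al_7$ covers it proceeds as you propose --- perfectness, bounding $|G/M|$ by $6|\Al_7|$, running through the Atlas, and concluding $G/M\cong\Al_7$ --- but the final identification is done by citing Lemma~\ref{G is uniquely determined by S and |G|} (from \cite[Lemma~2.5]{Nguyen-TongViet}), which says directly that a perfect $G$ with $G/M\cong S$, $|M|\le|\Mult(S)|$ and $\cd(G)\subseteq\cd(\Schur(S))$ is determined by $S$ and $|G|$. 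This is where your sketch has a genuine gap: after identifying $G/M\cong\Al_n$ you assert that $G$ is a perfect central extension of $\Al_n$ by an abelian group, but nothing you have said forces $M$ to be central (or abelian). That step needs an argument, and the paper supplies it in packaged form via the cited lemma. Once that lemma is in hand, the concerns you raise in the last paragraph about $6.\Al_6$, $6.\Al_7$ and competing central products evaporate, since the Schur cover of $\Al_n$ is universal among perfect central extensions and the lemma already hands you uniqueness; that paragraph is therefore unnecessary worry rather than missing content.
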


\begin{proof} This is a consequence of Theorem~\href{theorem_alternating}{A}, Theorem~\ref{theorem-A6andA7},
and~\cite[Corollary~1.4]{Nguyen-TongViet}.
\end{proof}

The symmetric group $\Sy_n$ has two isomorphism classes of Schur
double covers, denoted by $\SSS^-$ and $\SSS^+$. It turns out that
these two covers are isoclinic and therefore their complex group
algebras $\CC \SSS^+$ and $\CC\SSS^-$ are isomorphic \cite{Morris}.
Our next result solves the above problem for the double covers of
the symmetric groups.

\begin{theoremC}\label{theorem_symmetric} Let $n\geq5$. Let $G$ be a finite group
and $\SSS^\pm$ the double covers of $\Sy_n$. Then $\CC G\cong \CC
\SSS^+$ \textup{(}or equivalently $\CC G\cong \CC\SSS^-$\textup{)}
if and only if $G\cong \SSS^+$ or $G\cong\SSS^-$.
\end{theoremC}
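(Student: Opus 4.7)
The plan is to reduce Theorem~C to Theorem~A by proving that any $G$ with $\CC G\cong \CC\SSS^\pm$ has derived subgroup isomorphic to $\AAA$, and then to identify $G$ among the possible extensions $1\to\AAA\to G\to\ZZ/2\to 1$ by a character-degree argument. I use throughout the standing consequences $|G|=|\SSS^\pm|=2\cdot n!$ and $\cd(G)=\cd(\SSS^\pm)$ (as multisets) of the hypothesis $\CC G\cong\CC\SSS^\pm$.

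First, since $\SSS^\pm$ has exactly two linear characters (the trivial character and the inflation of the sign character of $\Sy_n$), $G$ has exactly two linear characters as well, and hence the derived subgroup $N:=[G,G]$ has index $2$ in $G$ with $|N|=n!$.

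Next, I would recover $\cd(N)$ from $\cd(G)$ via Clifford theory applied to $N\lhd G$. Each $\chi\in\Irr(G)$ is either of \emph{extending} type (so $\chi|_N\in\Irr(N)$ and $\chi,\chi\otimes\operatorname{sgn}$ are the two $G$-extensions of $\chi|_N$, both of degree $\chi(1)$) or of \emph{inducing} type (so $\chi(1)$ is even and $\chi|_N=\psi+\psi^g$ for two distinct $N$-irreducibles of degree $\chi(1)/2$). The analogous dichotomy for $\Irr(\SSS^\pm)$ over $\AAA$ is known explicitly: among the non-spin irreducibles (lifted from $\Sy_n$), non-self-conjugate partitions give extending-type and self-conjugate partitions give inducing-type characters; among the spin irreducibles, labelled by strict partitions of $n$, the associate pairs are extending-type and the self-associate spin characters are inducing-type. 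Using this explicit classification together with $\cd(G)=\cd(\SSS^\pm)$ and careful multiplicity bookkeeping, I would pair up extending-type degrees (which must come in matched doubles $\chi\leftrightarrow\chi\otimes\operatorname{sgn}$) and read off $\cd(N)=\cd(\AAA)$. Hence $\CC N\cong \CC\AAA$, and Theorem~A yields $N\cong\AAA$.

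Finally, $G$ is an extension $1\to\AAA\to G\to\ZZ/2\to 1$, and $Z(\AAA)\cong\ZZ/2$ is characteristic, hence normal in $G$. Modulo $Z(\AAA)$, $G$ is an extension of $\Sy_n$ by $\ZZ/2$, so the candidates for $G$ are $\SSS^+$, $\SSS^-$, the direct product $\AAA\times\ZZ/2$, and (for $n=6$) a handful of further possibilities stemming from the exceptional outer automorphism of $\Al_6$. The non-$\SSS^\pm$ candidates are ruled out by exhibiting a character degree with incorrect multiplicity (for instance $\cd(\AAA\times\ZZ/2)$ doubles every entry of $\cd(\AAA)$, whereas in $\cd(\SSS^\pm)$ the non-spin and spin contributions have a quite different multiplicity pattern); the small exceptional cases $n\le 7$ are verified directly from known character tables. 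The principal difficulty will be the second step: since one cannot read the extending/inducing type directly off $\cd(G)$, the argument must lean on the explicit partition-combinatorial description of the restriction $\Irr(\SSS^\pm)\to\Irr(\AAA)$ together with careful multiplicity counting built on the Schur--Morris--Humphreys theory of spin characters.
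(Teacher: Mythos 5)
Your step 1 is correct and matches the paper's opening move: $|G|=2\,n!$ and $|G:G'|=2$.

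The gap is at step 2, and it is a real one, not just a matter of details to fill in. You want to conclude $\cd^\ast(N)=\cd^\ast(\AAA)$ from $\cd^\ast(G)=\cd^\ast(\SSS)$ via Clifford theory. But for the a priori unknown group $G$, the extending/inducing dichotomy for a given $\chi\in\Irr(G)$ over $N=G'$ is \emph{not} determined by $\chi(1)$: two characters of $G$ of equal (even) degree $d$ could be an extending pair $\chi,\chi\otimes\mathrm{sgn}$ (contributing one degree-$d$ character to $\cd^\ast(N)$), or two separate inducing-type characters (contributing four degree-$d/2$ characters to $\cd^\ast(N)$), or one of each. The ``explicit classification'' you invoke (non-self-conjugate vs.\ self-conjugate partitions, associate vs.\ self-associate spin characters) describes the behaviour of $\Irr(\SSS)$ over $\AAA$; it tells you nothing about $\Irr(G)$ over $N$ until you already know $G\cong\SSS$, which is what you are trying to prove. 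So the ``multiplicity bookkeeping'' cannot, in general, recover $\cd^\ast(N)$ from $\cd^\ast(G)$, and the reduction to Theorem~A does not get off the ground. (Step 3 also glosses over the classification of extensions $1\to\AAA\to G\to C_2\to 1$ — central products such as $\AAA\circ C_4$ need to be accounted for — but that is secondary.)

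The paper takes a different and safer route that sidesteps the need to determine $\cd^\ast(G')$. It first upgrades ``$G'$ has index $2$'' to ``$G'$ is perfect'' (otherwise $G/G''$ would have an irreducible character of degree $2$, which $\SSS$ lacks), then picks a chief factor $G'/M\cong S^k$ and analyses $C/M=\bC_{G/M}(G'/M)$. In the case $C=M$, $G/M$ is almost simple with socle $S^k$ and Propositions~\ref{symmetric1}, \ref{proposition main} (built on CFSG, prime-power degrees, and the non-existence result Proposition~\ref{prop:nondegree}) force $k=1$ and $S\cong\Al_n$, giving $G/M\cong\Sy_n$ and hence $G\cong\SSS^\pm$. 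In the case $C>M$, one gets $G/M\cong S^k\times C_2$; Proposition~\ref{symmetric3} again forces $k=1$, $S\cong\Al_n$, so $G'\cong\AAA$ and $G$ is a central product of $G'$ with a small abelian $C$, and \emph{this} configuration is killed by the conjugacy-class inequality $k(\SSS)<2k(\AAA)$ (Lemma~\ref{comparing number of classes of An and Sn}) — which plays the role your degree-doubling observation about $\AAA\times C_2$ was meant to play, but applied to $G$ directly rather than to a hypothetical $N$. I'd encourage you to read Sections~\ref{section:nondegree}--\ref{section theorem C}: the chief-factor strategy is the standard way around exactly the obstruction you ran into.
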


Let $\Irr(G)$ denote the set of all irreducible representations (or
characters) of a group $G$ over the complex field. As mentioned
above, two finite groups have isomorphic complex group algebras if
and only if they have the same set of degrees (counting
multiplicities) of irreducible characters. Therefore, the proofs of
our main results as expected depend heavily on the representation
theories of the symmetric and alternating groups, their double
covers, and quasi-simple groups in general. In particular, we make
use of the known results on relatively small degrees and prime power
degrees of the irreducible characters of $\AAA$ and $\SSS^\pm$.

The remainder of the paper is organized as follows. In the next
section, we give a brief overview of the representation theory of
the symmetric and alternating groups as well as their double covers,
and then collect some results on prime power character degrees of
these groups. The results on minimal degrees are then presented in
Section~\ref{section-low degree of groups}. In
Section~\ref{section-useful lemmas}, we establish some useful lemmas
that will be needed later in the proofs of the main results. The
proof of Theorem~\href{theorem_alternating}{A} is carried out in
Section~\ref{section-theoremA} and exceptional covers of $\Al_6$ and
$\Al_7$ are treated in Section~\ref{sectionA6andA7}.

The last four sections are devoted to the proof of
Theorem~\href{theorem_symmetric}{C}. Let $G$ be a finite group such
that $\CC G\cong \CC\SSS^\pm$. We will show that $G'=G''$ and
therefore there exists a normal subgroup $M$ of $G$ such that
$M\subseteq G'$ and the chief factor $G'/M$ is isomorphic to a
direct product of $k$ copies of a non-abelian simple group $S$. To
prove that $G$ is isomorphic to one of $\SSS^\pm$, one of the key
steps is to show that this chief factor is isomorphic to $\Al_n$. We
will do this by using the classification of finite simple groups to
eliminate almost all possibilities for $k$ and $S$. As we will see,
it turns out that the case of simple groups of Lie type in even
characteristic is most difficult.

In Section~\ref{section:nondegree}, we prove a non-existence result
for particular character degrees of $\SSS^\pm$ and apply it in
Section~\ref{section eliminating even charactristic} to show that
$S$ cannot be a simple group of Lie type in even characteristic. We
then eliminate other possibilities for $S$ in Section~\ref{section
eliminating simple groups} and complete the proof of
Theorem~\href{theorem_symmetric}{C} in Section~\ref{section theorem
C}.

We close this introduction with many thanks to the referee for
his/her careful reading of the manuscript and several helpful
comments.

\begin{notation} Since $\CC \SSS^+\cong\CC \SSS^-$, when working with character
degrees of $\SSS^\pm$, it suffices to consider just one of the two
covers. For the sake of convenience, we will write $\SSS$ to denote
either one of the two double covers of $\Sy_n$. If $X$ and $Y$ are
two multisets, we write $X\subseteq Y$ and say that $X$ is a
sub-multiset of $Y$ if the multiplicity of any element in $X$ does
not exceed that of the same element in $Y$. For a finite group $G$,
the number of conjugacy classes of $G$ is denoted by $k(G)$. We
write $\Irr_{2'}(G)$ to mean the set of all irreducible characters
of $G$ of odd degree. The set and the multiset of character degrees
of $G$ are denoted respectively by $\cd(G)$ and $\cd^\ast(G)$.
Finally, we denote by $d_i(G)$ the $i$th smallest non-trivial
character degree of $G$. Other notation is standard or will be
defined when needed.
\end{notation}


\section{Prime power degrees of $\SSS$ and $\AAA$}\label{section-degrees of Sn and An}

In this section, we collect some results on irreducible characters
of prime power degree of $\SSS$ and $\AAA$. The irreducible
characters of the double covers of the symmetric and alternating
groups are divided into two kinds: faithful characters which are
also known as spin characters, and non-faithful characters which can
be viewed as ordinary characters of $\Sy_n$ or $\Al_n$.

\subsection{Characters of $\Sy_n$ and $\Al_n$} To begin with, let us recall some notation and terminology of
partitions and Young diagrams in connection with representation
theory of the symmetric and alternating groups. A partition
$\lambda$ of $n$ is a finite sequence of natural numbers
$(\lambda_1,\lambda_2,...)$ such that
$\lambda_1\geq\lambda_2\geq\cdot\cdot\cdot$ and
$\lambda_1+\lambda_2+\cdot\cdot\cdot=n$. If
$\lambda_1>\lambda_2>\cdots$, we say that $\lambda$ is a \emph{bar
partition} of $n$ (also called a \emph{strict partition} of~$n$).
The Young diagram associated to $\lambda$ is an array of $n$ nodes
with $\lambda_i$ nodes on the $i$th row. At each node $(i,j)$, we
define the \emph{hook length} $h(i,j)$ to be the number of nodes to
the right and below the node $(i,j)$, including the node $(i,j)$.

It is well known that the irreducible characters of $\Sy_n$ are in
one-to-one correspondence with partitions of $n$. The degree of the
character $\chi_\lambda$ corresponding to $\lambda$ is given by the
following \emph{hook-length formula} of Frame, Robinson and
Thrall~\cite{Frame-Robinson-Thrall}:
$$f_\lambda:=\chi_{\lambda}(1)=\frac{n!}{\prod _{i,j}h(i,j)} \:.$$

Two partitions of $n$ whose Young diagrams transform into each other
when reflected about the line $y=-x$, with the coordinates of the
upper left node taken as $(0, 0)$, are called conjugate partitions.
The partition conjugate to $\lambda$ is denoted by
$\overline{\lambda}$. If $\lambda=\overline{\lambda}$, we say that
$\lambda$ is self-conjugate. The irreducible characters of $\Al_n$
can be obtained by restricting those of $\Sy_n$ to $\Al_n$. More
explicitly,
$\chi_{\lambda}\downarrow_{\Al_n}=\chi_{\overline{\lambda}}\downarrow_{\Al_n}$
is irreducible if $\lambda$ is not self-conjugate. Otherwise,
$\chi_{\lambda}\downarrow_{\Al_n}$ is the sum of two different
irreducible characters of $\Al_n$ of the same degree. In short, the
degrees of irreducible characters of $\Al_n$ are labeled by
partitions of $n$ and are given by
$$ \widetilde{f}_\lambda= \left\{\begin
{array}{ll}
f_\lambda & \text{ if } \lambda\neq\overline{\lambda},\\
f_\lambda/2 & \text{ if } \lambda=\overline{\lambda}.
\end {array} \right.$$

Irreducible representations of prime-power degree of the symmetric
and alternating groups were classified by A.~Balog, C.~Bessenrodt,
J.\,B.~Olsson and K.~Ono, see~\cite{Balog-Bessenrodt-Olsson-Ono}.
This result is critical in eliminating simple groups other than
$\Al_n$ involved in the structure of finite groups whose complex
group algebras are isomorphic to $\CC\AAA$ or~$\CC\SSS$.

\begin{lemma}[\cite{Balog-Bessenrodt-Olsson-Ono}, Theorem 2.4]\label{prime power degrees of Sn} Let $n\geq 5$. An
irreducible character $\chi_\lambda\in\Irr(\Sy_n)$ corresponding to
a partition $\lambda$ of $n$ has prime power degree
$f_\lambda=p^r>1$ if and only if one of the following occurs:
\begin{enumerate}
\item $n=p^r+1$, $\lambda=(n-1,1)$ or $(2,1^{n-2})$, and ${f}_\lambda=n-1$;
\item $n=5$, $\lambda=(2^2,1)$ or $(3,2)$, and $f_\lambda=5$;
\item $n=6$, $\lambda=(4,2)$ or $(2^2,1^2)$, and $f_\lambda=9$; $\lambda=(3^2)$ or $(2^3)$, and
$f_\lambda=5$; $\lambda=(3,2,1)$ and $f_\lambda=16$;
\item $n=8$, $\lambda=(5,2,1)$ or $(3,2,1^3)$, and ${f}_\lambda=64$;
\item $n=9$, $\lambda=(7,2)$ or $(2^2,1^5)$, and ${f}_\lambda=27$.
\end{enumerate}
\end{lemma}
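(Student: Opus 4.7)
The plan is to exploit the interplay between the hook-length formula
$$f_\lambda = \frac{n!}{\prod_{(i,j)} h(i,j)}$$
and the classical theorem that, for any prime $q$, one has $q \nmid f_\lambda$ if and only if $\lambda$ is a $q$-core, i.e., the Young diagram of $\lambda$ contains no hook of length $q$. Consequently, if $f_\lambda = p^r > 1$, then $\lambda$ must simultaneously be a $q$-core for every prime $q \leq n$ distinct from $p$, which is a very rigid combinatorial constraint.

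My first step would be to invoke Bertrand's postulate, together with sharpenings à la Nagura/Ramanujan for slightly narrower windows, to guarantee that for all sufficiently large $n$ the interval $(n/2, n]$ contains at least two primes. A partition of $n$ that is a $q$-core for some prime $q$ with $n/2 < q \leq n$ has a highly restricted shape, most conveniently described via the beta-number or abacus model of cores. Requiring $\lambda$ to be a $q$-core for two distinct such primes $q_1, q_2$ pins $\lambda$ down to a short explicit list, essentially consisting of hooks $(n-k, 1^k)$. Since a hook has degree $\binom{n-1}{k}$, and a theorem of Erdős/Sylvester shows that $\binom{n-1}{k}$ is a proper prime power only in very few situations, the only hooks with prime-power degree for large $n$ give case (1): $\lambda = (n-1,1)$ or $(2,1^{n-2})$ with $n-1$ itself a prime power.

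The main obstacle is the treatment of small $n$. In the range where the density of primes near $n$ is not yet sparse enough for the two-prime Bertrand argument to apply, one must fall back on direct enumeration: list the partitions of $n$, compute hook products, and extract those with prime-power degree. This finite but delicate case analysis produces the sporadic exceptions (2)--(5) for $n \in \{5,6,8,9\}$, and the bulk of the labor lies in verifying that no further sporadic prime-power degrees arise between these and the asymptotic threshold. A secondary subtlety to keep track of is the conjugate symmetry $f_\lambda = f_{\overline{\lambda}}$, which accounts for the appearance of each character in conjugate pairs and must be respected in the bookkeeping.
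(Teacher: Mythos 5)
The ``classical theorem'' you invoke is stated backwards, and this breaks the argument at its root. It is \emph{not} true that $q\nmid f_\lambda$ if and only if $\lambda$ is a $q$-core; rather, $\lambda$ being a $q$-core is equivalent to $\chi_\lambda$ having $q$-\emph{defect zero}, i.e.\ $\nu_q(f_\lambda)=\nu_q(n!)$, so that for $q\le n$ a $q$-core gives the \emph{maximal} power of $q$ in $f_\lambda$, not a power coprime to $q$. (Concretely: $\lambda=(3,2)$ has $f_\lambda=5$, is not a $3$-core, and $3\nmid f_\lambda$.) The correct starting point, and the one used in \cite{Balog-Bessenrodt-Olsson-Ono}, is the opposite: if $f_\lambda=p^r$ and $q\in(n/2,n]$ is a prime with $q\neq p$, then since $\nu_q(n!)=1$ one must have $q\mid\prod h(i,j)$, and because every hook length is at most $n<2q$, this forces $\lambda$ to have a \emph{hook of length exactly $q$}. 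Your proposed constraint (simultaneous $q$-cores) would force $f_\lambda$ to be divisible by a large product of primes, which is the wrong direction entirely.

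Even after this correction, two primes in $(n/2,n]$ do not ``pin $\lambda$ down to hooks $(n-k,1^k)$.'' Two such hook lengths must lie simultaneously in the first row or in the first column (by \cite[Cor.~2.8]{Balog-Bessenrodt-Olsson-Ono}, since their sum exceeds $n$), which only forces the first-column hook length $h_1$ to be $>n/2$. What the actual proof does --- and what is mirrored in Section~\ref{section:nondegree} of this paper --- is build increasing sequences of integers with carefully compounding prime-factor conditions (the set $M(n)$ and Theorem~\ref{thm:bound}), to bound $n-h_1$ by an absolute constant; then a $2$-adic estimate on $f_\lambda$ in terms of $n-h_1$ (as in \cite[Prop.~4.1]{Balog-Bessenrodt-Olsson-Ono}) bounds $n$ itself, after which the remaining finite range is enumerated. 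Your single application of Bertrand/Nagura plus the Sylvester--Erd\H{o}s input on binomial coefficients of hooks skips this entire structural reduction and would not close the argument.
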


\begin{lemma}[\cite{Balog-Bessenrodt-Olsson-Ono}, Theorem 5.1]\label{prime power degrees of An} Let $n\geq 5$. An
irreducible character degree $\widetilde{f}_\lambda$ of $\Al_n$
corresponding to a partition $\lambda$ of $n$
is a prime power $p^r>1$ if and only if one of the following occurs:
\begin{enumerate}
\item $n=p^r+1$, $\lambda=(n-1,1)$ or $(2,1^{n-2})$, and $\widetilde{f}_\lambda=n-1$;
\item $n=5$, $\lambda=(2^2,1)$ or $(3,2)$, and
$\widetilde{f}_\lambda=5$; $\lambda=(3,1^2)$ and
$\widetilde{f}_\lambda=3$;
\item $n=6$, $\lambda=(4,2)$ or $(2^2,1^2)$ and
$\widetilde{f}_\lambda=9$; $\lambda=(3^2)$ or $(2^3)$ and
$\widetilde{f}_\lambda=5$; $\lambda=(3,2,1)$ and
$\widetilde{f}_\lambda=8$;
\item $n=8$, $\lambda=(5,2,1)$ or $(3,2,1^3)$, and $\widetilde{f}_\lambda=64$;
\item $n=9$, $\lambda=(7,2)$ or $(2^2,1^5)$, and $\widetilde{f}_\lambda=27$.
\end{enumerate}
\end{lemma}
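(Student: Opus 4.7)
The plan is to exploit the correspondence between irreducible characters of $\Sy_n$ and $\Al_n$ recalled just above the statement: for a partition $\lambda$ of $n$, the restriction $\chi_\lambda\downarrow_{\Al_n}$ is irreducible of degree $\widetilde{f}_\lambda=f_\lambda$ when $\lambda\neq\overline{\lambda}$, and splits into two irreducibles of common degree $\widetilde{f}_\lambda=f_\lambda/2$ when $\lambda=\overline{\lambda}$. I would split the argument into these two cases accordingly.

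In the non-self-conjugate case, $\widetilde{f}_\lambda$ is a prime power if and only if $f_\lambda$ is, so I would invoke Lemma~2.1 and inspect each $\lambda$ listed there to check whether $\lambda=\overline{\lambda}$. For $n\geq 5$, the conjugate pair $\{(n-1,1),(2,1^{n-2})\}$ contributes a single $\Al_n$-character of degree $n-1$, producing case (1). The pairs for $n=5$, for $n=8$, for $n=9$, together with $\{(4,2),(2^2,1^2)\}$ and $\{(3^2),(2^3)\}$ for $n=6$, all consist of distinct conjugate partners and yield case (2), the first two sub-cases of (3), and cases (4) and (5). The only self-conjugate partition appearing in Lemma~2.1 is $(3,2,1)$ for $n=6$, and it must instead be handled in the next case.

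In the self-conjugate case I need $f_\lambda=2p^r$. When $p=2$, so $f_\lambda$ is itself a power of $2$, Lemma~2.1 applies again and the only self-conjugate candidate is $(3,2,1)$ for $n=6$ with $f_\lambda=16$, giving the final sub-case of (3) with $\widetilde{f}_\lambda=8$. The remaining and crucial situation is $p$ odd with $v_2(f_\lambda)=1$. For this I would adapt the hook-theoretic machinery of Balog--Bessenrodt--Olsson--Ono underlying Lemma~2.1. A self-conjugate $\lambda$ has its diagonal (principal) hooks of odd lengths $h_1>h_2>\cdots>h_d$, while all off-diagonal hooks pair up with equal values across the main diagonal; this symmetry tightly constrains the prime factorization of $\prod h(i,j)$ and hence of $f_\lambda=n!/\prod h(i,j)$. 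A direct check for small $n$ singles out $(3,1^2)$ for $n=5$ with $\widetilde{f}_\lambda=3$ as the only genuinely new entry, and for larger $n$ a hook-removal together with $p$-core arguments force $n!/\prod h(i,j)$ away from the highly restrictive form $2p^r$.

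The main obstacle is the odd-$p$ sub-case of the self-conjugate analysis: Lemma~2.1 gives no direct help, because $2p^r$ is not itself a prime power, so one must rerun the divisibility and hook-removal arguments of Balog--Bessenrodt--Olsson--Ono with the weaker target $f_\lambda=2p^r$, carrying along the diagonal-hook contribution as an extra bookkeeping term. Once this is established, assembling the five cases of the statement from the two case analyses is routine.
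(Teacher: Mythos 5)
The paper does not give a proof of this lemma: it is stated as a citation to \cite[Theorem~5.1]{Balog-Bessenrodt-Olsson-Ono} and used as a black box. So there is no ``paper's own proof'' to match against; I can only evaluate your reduction on its own terms.

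Your structural reduction is correct and matches how the cited result is actually obtained. Splitting on whether $\lambda$ is self-conjugate, and then (in the self-conjugate case $\widetilde f_\lambda=f_\lambda/2$) on whether $p=2$ or $p$ is odd, is exactly the right decomposition. You correctly identify that Lemma~\ref{prime power degrees of Sn} settles the non-self-conjugate case and the self-conjugate $p=2$ case, and that $(3,2,1)$ for $n=6$ is the only self-conjugate partition that surfaces from Lemma~\ref{prime power degrees of Sn}, producing $\widetilde f_\lambda=8$. You also correctly isolate the one genuinely new entry, $\lambda=(3,1^2)$ for $n=5$ with $f_\lambda=6=2\cdot 3$ and $\widetilde f_\lambda=3$.

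However, the sub-case that actually carries the weight of the theorem --- showing that no self-conjugate $\lambda$ with $n>5$ has $f_\lambda=2p^r$ for an odd prime $p$ --- is asserted, not proved. The sentence about ``hook-removal together with $p$-core arguments forcing $n!/\prod h(i,j)$ away from the highly restrictive form $2p^r$'' describes the kind of machinery one would want, but it does not amount to an argument. This is not a minor loose end: it is where all the content of the cited Theorem~5.1 beyond Theorem~2.4 lives. Concretely, one must rerun the large/small prime analysis and the bounds on $n-h_1$ from \cite{Balog-Bessenrodt-Olsson-Ono} with the target $2p^r$ rather than $p^r$, using the diagonal symmetry of a self-conjugate diagram (the off-diagonal hooks come in equal pairs, the principal hooks are distinct odd integers) to control both the 2-part and the odd part of $\prod h(i,j)$, and only then do the finite cases reduce to a small computer check. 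Until that is written out, the proof is incomplete at its most important step. Your outline is the right plan, but it is a plan, not a proof.
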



\subsection{Spin characters of $\Sy_n$ and $\Al_n$} We now recall the spin representation theory of the
symmetric and alternating groups, due to
Schur~\cite{Hoffman-Humphrey,Morris,Schur,Wagner}. To each bar partition
$\mu=(\mu_1,\mu_2,...,\mu_m)$ (i.e., $\mu_1>\mu_2> \cdots>\mu_m$) of
$n$, there corresponds one or two irreducible characters (depending
on whether $n-m$ is even or odd, respectively) of $\SSS$ of degree
$$g_\mu=2^{\lfloor (n-m)/2\rfloor} \bar g_\mu$$
where $\bar g_\mu$ denotes the number of shifted standard tableaux
of shape~$\mu$. This number can be computed by an analogue of the
hook-length formula, the \emph{bar formula}
\cite[Prop.~10.6]{Hoffman-Humphrey}. The length $b(i,j)$ of the
$(i,j)$-bar of $\mu$ is the length of the $(i,j+1)$-hook in the
shift-symmetric diagram of $\mu$ (obtained by reflecting the shifted
diagram of $\mu$ along the diagonal and pasting it onto $\mu$; see
\cite[p.14]{Macdonaldbook} for details). Then
\[\bar g_\mu =\frac{n!}{\prod_{i,j} b(i,j)}\:.\] The spin character
degree may also be computed by the formula
\[g_\mu=2^{\lfloor
(n-m)/2\rfloor}\frac{n!}{\mu_1!\mu_2!\cdots\mu_m!}\prod_{i<j}
\frac{\mu_i-\mu_j}{\mu_i+\mu_j}.\]

Again, one can get faithful irreducible characters of $\AAA$ by
restricting those of $\SSS^\pm$ to $\AAA$ in the following way. If
$n-m$ is odd, then the restrictions of the two characters of
$\SSS^\pm$ labeled by $\mu$ to $\AAA$ are the same and irreducible.
Otherwise, the restriction of the one character labeled by $\mu$ is
the sum of two irreducible characters of the same degree $g_\mu/2$.
Let $\widetilde{g}_\mu$ be the degree of the irreducible spin
character(s) of $\AAA$ labeled by the bar partition $\mu$; we then
have
$$ \widetilde{g}_\mu= \left\{\begin
{array}{ll}
g_\mu & \text{ if } n-m \text{ is odd},\\
g_\mu/2 & \text{ if } n-m \text{ is even}.
\end {array} \right.$$

The classification of spin representations of prime power degree of
the symmetric and alternating groups has been done by the first and
third authors of this paper in~\cite{Bessenrodt-Olsson}.

\begin{lemma}[\cite{Bessenrodt-Olsson}, Theorem~4.2]\label{prime power degrees of Schur cover of Sn} Let
$n\geq5$ and $\mu$ be a bar partition of $n$. The spin irreducible
character degree $g_\mu$ of $\SSS$ corresponding to $\mu$
is a prime power if and only if one of the following occurs:
\begin{enumerate}
\item $\mu=(n)$ and $g_\mu=2^{\lfloor(n-1)/2\rfloor}$;
\item $n=2^r+2$ for some $r\in\NN$, $\mu=(n-1,1)$, and $g_\mu=2^{2^{r-1}+r}$;
\item $n=5$, $\mu=(3,2)$, and $g_\mu=4$;
\item $n=6$, $\mu=(3,2,1)$, and $g_\mu=4$;
\item $n=8$, $\mu=(5,2,1)$, and $g_\mu=64$.
\end{enumerate}
\end{lemma}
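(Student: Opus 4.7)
The plan is to prove the forward direction by direct verification and the converse by first reducing to the condition that $\bar g_\mu$ is a power of $2$, then classifying bar partitions with this property by their number of parts $m=m(\mu)$.

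For the forward direction, I would substitute each $\mu$ from (1)--(5) into the product formula
$$g_\mu=2^{\lfloor(n-m)/2\rfloor}\frac{n!}{\mu_1!\mu_2!\cdots\mu_m!}\prod_{i<j}\frac{\mu_i-\mu_j}{\mu_i+\mu_j}.$$
For $\mu=(n)$, the shifted diagram admits a unique standard tableau, so $\bar g_{(n)}=1$ and $g_{(n)}=2^{\lfloor(n-1)/2\rfloor}$. For $\mu=(n-1,1)$, the formula collapses to $g_\mu=2^{\lfloor(n-2)/2\rfloor}(n-2)$; substituting $n=2^r+2$ yields $2^{2^{r-1}+r}$. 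The three sporadic cases (3)--(5) are small and checked by inspection.

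For the converse, suppose $\mu$ is a bar partition of $n\ge 5$ with $g_\mu=q^s$ a prime power. The first reduction is: if $q$ is odd then $g_\mu$ is odd, which forces $\lfloor(n-m)/2\rfloor=0$, i.e., $n-m\le 1$. Since $\mu$ has $m$ distinct positive parts summing to $n\ge 5$, this forces $\mu=(n)$, giving case (1). Therefore I may assume $\mu\ne(n)$ and $q=2$, in which case both the $2$-power prefactor and $\bar g_\mu$ must themselves be powers of $2$. The remaining task is the classification of bar partitions of $n\ge 5$ with $\bar g_\mu$ a power of $2$, which I attack by cases in $m$. For $m=2$, writing $\mu=(a,b)$ with $a+b=n$, the bar formula reduces the problem to determining when $\binom{n}{a}(a-b)/(a+b)$ is a power of $2$; applying Kummer's theorem on the $2$-adic valuation of binomial coefficients pins this down to $(n-1,1)$ with $n=2^r+2$ (case 2) together with the sporadic $(3,2)$ at $n=5$ (case 3). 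For $m=3$, a parallel but more delicate analysis of the product formula, splitting on the parities and $2$-adic valuations of $a\pm b$, $a\pm c$, $b\pm c$, yields exactly $(3,2,1)$ at $n=6$ (case 4) and $(5,2,1)$ at $n=8$ (case 5).

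For $m\ge 4$, I would show $\bar g_\mu$ always has an odd prime factor. Using $\mu_i\ge m+1-i$ one obtains $\mu_1+\mu_2\le n-\binom{m-1}{2}\le n-3$, so the largest bar length is strictly less than $n$. Bertrand's postulate then furnishes a prime $p\in(n/2,n]$ that divides $n!$ but, by the preceding bound, cannot equal any bar length $b(i,j)$; since the bar lengths are bounded by $\mu_1+\mu_2<p$, such a $p$ must divide $\bar g_\mu=n!/\prod b(i,j)$, contradicting that $\bar g_\mu$ is a $2$-power. The finitely many small $n$ not covered by this asymptotic argument are disposed of by direct computation. The main obstacle will be making the bar-length analysis for $m\ge 4$ fully rigorous: Bertrand's postulate easily handles primes close to $n$, but one must also ensure no odd prime $p\le n/2$ survives in $\bar g_\mu$ for the intermediate-$n$ cases, which likely requires a refined analysis of $p$-bar cores and quotients (analogous to the use of $p$-cores in the classification of ordinary prime-power characters of $\Sy_n$) to guarantee the existence of at least one surviving odd prime factor.
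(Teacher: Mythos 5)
Your forward direction and your treatment of $m\le 3$ via the product formula are plausible in outline (though the ``Kummer-style'' analysis you gesture at is not just about the $2$-adic valuation of a binomial coefficient --- you must show the whole quantity $\binom{n}{a}(a-b)/n$, respectively its $m=3$ analogue, has \emph{no} odd prime divisor, and this odd-part analysis is the bulk of the work). The genuine gap is in the $m\ge 4$ step. You assert that Bertrand's postulate ``furnishes a prime $p\in(n/2,n]$ that ... by the preceding bound, cannot equal any bar length $b(i,j)$''; this is a non sequitur. Your preceding bound gives $\mu_1+\mu_2\le n-3$, so a prime is guaranteed to exceed every bar length only if it lies in $(n-3,n]$, whereas Bertrand only produces one in the much wider interval $(n/2,n]$, where it may well be a bar length and hence be absorbed into the denominator of the bar formula. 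For instance, with $n=26$ and $\mu=(20,3,2,1)$ one has $\mu_1+\mu_2=23$, and there is \emph{no} prime in $(23,26]$; the prime $23\in(n/2,n]$ actually is a bar length of $\mu$. (Here $\bar g_\mu=2^2\cdot3\cdot5\cdot13\cdot17\cdot19$ does have odd factors, but they arise from primes that \emph{avoid} the bar-length multiset, not from a prime exceeding $\mu_1+\mu_2$.) Since prime gaps of width $\ge 4$ occur for infinitely many $n$, the $m\ge4$ step as written fails for infinitely many $n$, and sweeping up ``the finitely many small $n$'' by computation does not repair it. Your closing remark that one needs a refined analysis via $p$-bar cores and quotients is exactly right, but that analysis is the core of the argument, not a loose end. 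There is also a small slip in the odd-prime reduction: for $n\ge 5$, $n-m\le 1$ is incompatible with $\mu$ having $m$ distinct positive parts, so that branch yields a contradiction rather than the case $\mu=(n)$; the conclusion $q=2$ still stands, but not by the route you describe.

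For the record, the paper does not prove this lemma; it simply cites Theorem~4.2 of \cite{Bessenrodt-Olsson}. The method used there --- and reprised in Section~\ref{section:nondegree} of the present paper for Proposition~\ref{prop:nondegree} --- is genuinely different from yours: instead of a case split on $m$, it bounds $n-b_1$, where $b_1=\mu_1+\mu_2$ is the largest bar length, via the prime-sequence invariant $t(n)$ and the arithmetical Theorem~\ref{thm:bound}, then plays the resulting smallness of $n-b_1$ against the bound on the $2$-part of $\bar g_\mu$ and against the lower bound $\bar g_\mu\ge\tfrac12(n-1)(n-4)$ (valid for $\mu\ne(n),(n-1,1)$). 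That strategy handles all $m$ uniformly and sidesteps the prime-gap obstruction that derails your $m\ge 4$ case.
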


\begin{lemma}[\cite{Bessenrodt-Olsson}, Theorem~4.3]\label{prime power degrees of Schur cover of An} Let
$n\geq5$ and $\mu$ be a bar partition of $n$. The spin irreducible
character degree $\widetilde{g}_\mu$ of $\AAA$ corresponding to
$\mu$
is a prime power if and only if one of the following
occurs:
\begin{enumerate}
\item $\mu=(n)$ and $\widetilde{g}_\mu=2^{\lfloor(n-2)/2\rfloor}$;
\item $n=2^r+2$ for some $r\in\NN$, $\mu=(n-1,1)$, and $\widetilde{g}_\mu=2^{2^{r-1}+r-1}$;
\item $n=5$, $\mu=(3,2)$, and $\widetilde{g}_\mu=4$;
\item $n=6$, $\mu=(3,2,1)$, and $\widetilde{g}_\mu=4$;
\item $n=8$, $\mu=(5,2,1)$, and $\widetilde{g}_\mu=64$.
\end{enumerate}
\end{lemma}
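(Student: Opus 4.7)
The plan is to deduce this lemma from the preceding Lemma~\ref{prime power degrees of Schur cover of Sn} via the restriction relation
\[
\widetilde{g}_\mu = \begin{cases} g_\mu & \text{if } n-m \text{ is odd,}\\ g_\mu/2 & \text{if } n-m \text{ is even,}\end{cases}
\]
combined with the factorization $g_\mu = 2^{\lfloor(n-m)/2\rfloor}\bar g_\mu$. The goal is to show that $\widetilde{g}_\mu$ being a prime power forces $g_\mu$ to be one as well, so that the spin classification for $\SSS$ applies; then a traversal of entries (1)--(5) of Lemma~\ref{prime power degrees of Schur cover of Sn}, with the appropriate halving when $n-m$ is even, produces the list claimed here.

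A key preliminary I would record is that for $n\geq 5$ no bar partition of $n$ has $n-m \leq 2$. Indeed, the minimum sum of $m$ distinct positive integers is $m(m+1)/2$, which forces $m \leq (-1+\sqrt{1+8n})/2$ and in particular $n-m \geq 3$ once $n\geq 5$. Excluding $n-m = 2$ is crucial, because that is the only scenario in which $\widetilde{g}_\mu = \bar g_\mu$ could be an odd prime power while $g_\mu = 2\bar g_\mu$ fails to be a prime power; removing this possibility is what makes the reduction to Lemma~\ref{prime power degrees of Schur cover of Sn} clean.

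With $n-m\geq 3$ in hand, I would split on parity. If $n-m$ is odd, $\widetilde{g}_\mu = g_\mu$ and the equivalence to Lemma~\ref{prime power degrees of Schur cover of Sn} is immediate. If $n-m$ is even, then $n-m\geq 4$, so $\widetilde{g}_\mu = 2^{(n-m)/2 - 1}\bar g_\mu$ is divisible by~$2$; any prime-power value is then a power of~$2$, forcing $\bar g_\mu$, and hence $g_\mu$, to be a power of~$2$. In either case Lemma~\ref{prime power degrees of Schur cover of Sn} supplies the candidates, and I would finish by reading off each of its cases (1)--(5), dividing the degree by~$2$ precisely when $n-m$ is even. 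The only mild bookkeeping point is in case~(1) of the target list, where the two parities of $n$ give values $2^{(n-2)/2}$ and $2^{(n-3)/2}$ respectively, which must be unified into $\widetilde{g}_\mu = 2^{\lfloor(n-2)/2\rfloor}$; this is a short parity check. The main (and essentially only) obstacle is the exclusion $n-m\neq 2$; once that is in place the rest is a mechanical transcription from the spin classification for~$\SSS$.
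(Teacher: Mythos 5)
The paper does not prove this lemma; it cites it verbatim from Bessenrodt--Olsson (Theorem~4.3), just as Lemma~\ref{prime power degrees of Schur cover of Sn} is cited as their Theorem~4.2, so there is no in-paper argument to compare against. That said, your reduction of the $\AAA$ statement to the $\SSS$ statement is correct and self-contained. The crucial observation is the one you isolate: for a bar partition $\mu$ of $n\ge 5$ with $m$ parts one always has $n-m\ge 3$ (distinctness of parts forces $m(m+1)/2\le n$), and hence when $n-m$ is even it is at least $4$; then $\widetilde g_\mu = 2^{(n-m)/2-1}\bar g_\mu$ is even, so any prime-power value of $\widetilde g_\mu$ must be a $2$-power, and the doubling $g_\mu = 2\widetilde g_\mu$ preserves that. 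Combined with the trivial odd-parity case $\widetilde g_\mu = g_\mu$, this gives the exact equivalence ``$\widetilde g_\mu$ is a prime power $\iff$ $g_\mu$ is a prime power'' needed to transport the $\SSS$ list to the $\AAA$ list. The mechanical transcription of cases (1)--(5) with the halving applied when $n-m$ is even (case (1) with $n$ odd, and case (2) since $n-m=2^r$ there) reproduces the stated values, including the $\lfloor(n-2)/2\rfloor$ unification in case~(1). In short, your argument correctly reconstructs the derivation that the cited source presumably carries out; it is not a gap in your proof that the paper itself omits it.
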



\section{Low degrees of $\SSS$ and $\AAA$}\label{section-low degree of groups}

We present in this section some results on minimal degrees of both
ordinary and spin characters of the symmetric and alternating
groups. We start with ordinary characters.

\begin{lemma}[Rasala~\cite{Rasala}]\label{minimal degrees of Sn} The following hold:
\begin{enumerate}
\item $d_1(\Sy_n)=n-1$ if $n\geq5$;

\item $d_2(\Sy_n)=n(n-3)/2$ if $n\geq9$;

\item $d_3(\Sy_n)=(n-1)(n-2)/2$ if $n\geq9$;

\item $d_4(\Sy_n)=n(n-1)(n-5)/6$ if $n\geq13$;

\item $d_5(\Sy_n)=(n-1)(n-2)(n-3)/6$ if $n\geq13$;

\item $d_6(\Sy_n)=n(n-2)(n-4)/3$ if $n\geq15$;

\item $d_7(\Sy_n)=n(n-1)(n-2)(n-7)/24$ if $n\geq15$.
\end{enumerate}
\end{lemma}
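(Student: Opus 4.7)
The plan is to prove each of the seven rankings in two parts: an \emph{upper bound}, exhibiting a specific partition whose hook-length formula evaluates to the claimed value $d_k(\Sy_n)$, and a \emph{lower bound}, showing that every other partition of $n$ has strictly larger degree. The candidate partitions are
\[(n-1,1),\ (n-2,2),\ (n-2,1^2),\ (n-3,3),\ (n-3,1^3),\ (n-3,2,1),\ (n-4,4),\]
together with their conjugates (which carry the same degree since $f_\lambda=f_{\overline{\lambda}}$). Each upper bound is a routine hook-product calculation; for instance, the hook product of $(n-4,4)$ equals $24(n-3)(n-4)(n-5)(n-6)(n-8)!$, whence $f_{(n-4,4)}=n(n-1)(n-2)(n-7)/24$, matching the stated $d_7(\Sy_n)$.

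For the lower bounds I would induct on $n$ using the branching rule
\[\chi_\lambda(1)=\sum_\mu\chi_\mu(1),\]
where $\mu$ runs over the partitions of $n-1$ obtained by stripping a removable corner off $\lambda$. Given a $\lambda$ not on the length-seven candidate list at level $n$, the strategy is to locate a child $\mu$ also off the length-seven candidate list at level $n-1$, so that the inductive hypothesis yields $\chi_\mu(1)\geq d_8(\Sy_{n-1})$; combined with the non-negative remaining summands and the elementary polynomial inequality $d_8(\Sy_{n-1})>d_7(\Sy_n)$, this forces $\chi_\lambda(1)>d_7(\Sy_n)$. For the smaller ranks $k<7$ the argument is analogous, but one must additionally check that $\chi_\lambda(1)$ does not accidentally coincide with some earlier $d_j$.

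The main obstacle will be partitions $\lambda$ with only a single removable corner, namely the rectangles $\lambda=(a^b)$ with $ab=n$ and $a,b\geq 2$: for these the branching rule collapses to one term and the induction stalls. Rectangles would have to be handled directly from the hook-length formula; because their hook lengths form the structured set $\{a+b-i-j+1:1\leq i\leq b,\ 1\leq j\leq a\}$, one obtains a transparent polynomial lower bound on $f_{(a^b)}$ that already exceeds the $O(n^4)$ value $d_7(\Sy_n)$ in the stated range. A secondary subtlety is the presence of ``exotic'' low-degree partitions for small $n$---for example $f_{(4,4)}=14<f_{(6,2)}=20$ for $n=8$---which explains the thresholds $n\geq 9,\,13,\,15$ in the statement; the border cases below each threshold must be verified by direct inspection of the character table of $\Sy_n$.
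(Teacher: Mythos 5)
The paper does not prove this lemma; it is imported verbatim as a citation to Rasala's 1977 paper, whose argument is a lengthy and quite technical induction on $n$ together with a careful classification of partitions by the size of $n-\lambda_1$. So there is no in-paper proof to match against; I can only assess your blind attempt on its own terms.

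Your upper-bound calculations are correct: the partitions $(n-1,1)$, $(n-2,2)$, $(n-2,1^2)$, $(n-3,3)$, $(n-3,1^3)$, $(n-3,2,1)$, $(n-4,4)$ and their conjugates do realize the stated degrees, and your hook-product evaluation for $(n-4,4)$ checks out. The structural difficulties you flag (rectangles with a single removable corner, and the small-$n$ exceptions behind the thresholds $n\ge 9,13,15$) are also genuine and would indeed require separate treatment.

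The gap is in your inductive step. You assert, and lean essentially on, the ``elementary polynomial inequality $d_8(\Sy_{n-1})>d_7(\Sy_n)$.'' This is false for every $n$ in the relevant range. The natural continuation of the list puts $d_8(\Sy_m)=(m-1)(m-2)(m-3)(m-4)/24$ (realized by $(m-4,1^4)$ and its conjugate), so $d_8(\Sy_{n-1})=(n-2)(n-3)(n-4)(n-5)/24$, whereas $d_7(\Sy_n)=n(n-1)(n-2)(n-7)/24$. After cancelling $(n-2)$, the comparison reduces to $(n-3)(n-4)(n-5)$ versus $n(n-1)(n-7)$, and the right-hand side dominates. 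Concretely, for $n=20$ one has $d_8(\Sy_{19})=18\cdot17\cdot16\cdot15/24=3060$ while $d_7(\Sy_{20})=20\cdot19\cdot18\cdot13/24=3705$; for $n=15$ one gets $d_8(\Sy_{14})=13\cdot12\cdot11\cdot10/24=715$ versus $d_7(\Sy_{15})=15\cdot14\cdot13\cdot8/24=910$. Since a single branching child off the level-$(n-1)$ list contributes too little, the induction does not close as written. To salvage the approach you would have to exploit \emph{several} summands in the branching rule simultaneously (e.g.\ showing that off-list partitions always have at least two off-list children with total contribution exceeding $d_7(\Sy_n)$), or work directly with hook-length estimates keyed to $n-\lambda_1$ as Rasala does; either way, substantially more than the one-child comparison you describe is needed.
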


\begin{lemma}[Tong-Viet~\cite{Tong-Viet2}]\label{minimal degrees of An} If $n\geq 15$ then $d_i(\Al_n)=d_i(\Sy_n)$ for $1\leq i\leq
4$ and, if $n\geq 22$, then $d_i(\Al_n)=d_i(\Sy_n)$ for $1\leq i\leq
7$.
\end{lemma}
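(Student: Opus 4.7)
The plan exploits the classical branching rule from $\Sy_n$ to $\Al_n$: if $\lambda\neq\overline{\lambda}$ then $\chi_\lambda\downarrow_{\Al_n}$ is irreducible of degree $f_\lambda$, whereas if $\lambda=\overline{\lambda}$ the restriction splits into two irreducibles of degree $f_\lambda/2$. Thus the multiset $\cd^\ast(\Al_n)$ is obtained from $\cd^\ast(\Sy_n)$ by identifying conjugate pairs and halving each self-conjugate contribution. Proving $d_i(\Al_n)=d_i(\Sy_n)$ for $i\leq 4$ (resp.\ $i\leq 7$) therefore reduces to two points: (i) the partitions realizing $d_1,\dots,d_4(\Sy_n)$ (resp.\ $d_1,\dots,d_7(\Sy_n)$) are not self-conjugate, and (ii) no self-conjugate partition $\lambda$ contributes a halved degree $f_\lambda/2$ below the threshold $d_4(\Sy_n)$ (resp.\ $d_7(\Sy_n)$).

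For (i), Lemma~\ref{minimal degrees of Sn} pins down the relevant partitions as $(n-1,1),(n-2,2),(n-2,1^2),(n-3,3)$, and for the sharper range also $(n-3,1^3),(n-3,2,1),(n-4,4)$. A direct comparison with the conjugate shapes -- e.g.\ $(n-1,1)^T=(2,1^{n-2})$, $(n-3,3)^T=(2^3,1^{n-6})$, $(n-4,4)^T=(2^4,1^{n-8})$, and analogously for the others -- shows that none of these is self-conjugate once $n\geq 15$. Hence each $d_i(\Sy_n)$ in the stated range occurs in $\cd(\Al_n)$ at its natural value.

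For (ii), the pivotal structural point is that a self-conjugate partition satisfies $\ell(\lambda)=\lambda_1$, so
\[
n\ =\ \sum_i \lambda_i\ \geq\ \lambda_1+(\lambda_1-1)\ =\ 2\lambda_1-1,\qquad\text{i.e.,}\qquad \lambda_1\leq \tfrac{n+1}{2}.
\]
Consequently $n-\lambda_1\geq (n-1)/2$, which is at least $7$ when $n\geq 15$ and at least $11$ when $n\geq 22$. This places $\lambda$ well outside the low-degree list in (i), every member of which satisfies $n-\lambda_1\leq 4$. A lower bound on $f_\lambda$ for partitions with $\lambda_1$ so constrained -- obtained either from the hook-length formula applied directly, or by a Rasala-type refinement of the argument yielding Lemma~\ref{minimal degrees of Sn} -- then forces $f_\lambda/2$ to dwarf the polynomial quantities $d_4(\Sy_n)=n(n-1)(n-5)/6$ and $d_7(\Sy_n)=n(n-1)(n-2)(n-7)/24$.

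The main obstacle is making the lower bound in (ii) effective across the entire relevant range. Asymptotically the disparity is overwhelming: the smallest $f_\lambda$ over self-conjugate $\lambda$ of $n$ is attained at hook or near-hook shapes and grows at least like a central binomial coefficient, whereas $d_k(\Sy_n)$ is a fixed polynomial in $n$. For the base cases $n\in\{15,\dots,21\}$ (for the $i\leq 4$ statement) and a short initial stretch of $n\geq 22$ (for the $i\leq 7$ statement), I would discharge the inequality by direct enumeration: self-conjugate partitions of $n$ are in bijection with partitions of $n$ into distinct odd parts (the principal hook lengths of the Durfee square), so the candidates are few, and each $f_\lambda$ is then computed explicitly by the hook-length formula and compared against the required threshold. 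Combined with the structural argument above, this yields the claimed equalities $d_i(\Al_n)=d_i(\Sy_n)$.
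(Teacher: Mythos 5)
The paper itself offers no proof here: Lemma~\ref{minimal degrees of An} is imported verbatim from~\cite{Tong-Viet2}, so there is no internal argument to compare yours against. Your outline is the natural way to reconstruct it, and parts~(i) and the structural reduction are correct. Identifying $\cd(\Al_n)$ as $\{f_\lambda : \lambda\neq\overline\lambda\}\cup\{f_\lambda/2 : \lambda=\overline\lambda\}$ is the right starting point, and your check that the Rasala shapes $(n-1,1),(n-2,2),(n-2,1^2),(n-3,3),(n-3,1^3),(n-3,2,1),(n-4,4)$ are all non-self-conjugate for $n\geq 15$, together with the observation $\lambda_1\leq (n+1)/2$ for self-conjugate $\lambda$, is exactly the skeleton one would use.

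The genuine gap is in step~(ii), and you flag it yourself: ``making the lower bound effective.'' Two things are missing. First, you need $f_\lambda > 2\,d_7(\Sy_n)$ (not merely $f_\lambda > d_7(\Sy_n)$) for every self-conjugate $\lambda$; the Rasala-type statement that small first-row and first-column length force $f_\lambda$ above the $d_7$ threshold is off by that factor of two, so simply invoking ``a Rasala-type refinement'' does not finish the argument. Second, the asymptotic claim that the smallest self-conjugate degree ``grows at least like a central binomial coefficient'' is asserted but not proved, and it is not a priori clear that the minimizer over self-conjugate shapes is a symmetric hook $(m,1^{m-1})$ once $n$ is even (there is then no self-conjugate hook at all). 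What actually does the work is a quantitative bound of the form: if $\lambda_1\leq n-m$ and $\lambda_1'\leq n-m$ then $f_\lambda$ is at least a specific polynomial in $n$ of degree $m$, from which $n-\lambda_1\geq (n-1)/2$ gives overwhelming growth. That bound exists (it is in Rasala's paper~\cite{Rasala}) but must be cited precisely or re-derived, with the constants tracked so that the factor of two is absorbed, and the finite range where the asymptotic inequality may fail must actually be tabulated rather than deferred. As written, the proof is an accurate plan rather than a complete argument: the asymptotic lemma is unjustified, the factor-of-two issue is unnoticed, and the base cases are acknowledged but not discharged.
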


The minimal degrees of spin irreducible representations of $\AAA$
and $\SSS$ were obtained by A.~Kleshchev and P.\,H.~Tiep,
see~\cite{Kleshchev-Tiep,Kleshchev-Tiep-pre}. These minimal degrees
are indeed the degrees of the \emph{basic spin} and \emph{second
basic spin} representations. Let $\mathfrak{d}_1(\AAA)$ and
$\mathfrak{d}_1(\SSS)$ denote the smallest degrees of irreducible
spin characters of $\AAA$ and $\SSS$, respectively.

\begin{lemma}[\cite{Kleshchev-Tiep},
Theorem~A]\label{kleshchev-tiep} Let $n\geq8$. The smallest degree
of the irreducible spin characters of $\AAA$ ($\SSS$, resp.) is
$\mathfrak{d}_1(\AAA)=2^{\lfloor(n-2)/2\rfloor}$
($\mathfrak{d}_1(\SSS)=2^{\lfloor(n-1)/2\rfloor}$, resp.) and there
is no degree between $\mathfrak{d}_1(\AAA)$ ($\mathfrak{d}_1(\SSS)$,
resp.) and $2\mathfrak{d}_1(\AAA)$ ($2\mathfrak{d}_1(\SSS)$, resp.).
\end{lemma}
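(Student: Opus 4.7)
The plan is to derive both assertions directly from the explicit degree formula
\[
g_\mu = 2^{\lfloor(n-m)/2\rfloor}\,\frac{n!}{\mu_1!\cdots\mu_m!}\prod_{i<j}\frac{\mu_i-\mu_j}{\mu_i+\mu_j}
\]
recorded in Section~\ref{section-degrees of Sn and An}, combined with a case analysis on the length $m=\ell(\mu)$ of the bar partition $\mu$ of $n$. Substituting $\mu=(n)$ and $\mu=(n-1,1)$ yields $g_{(n)}=2^{\lfloor(n-1)/2\rfloor}$ and $g_{(n-1,1)}=(n-2)\cdot 2^{\lfloor(n-2)/2\rfloor}$; tracking the parity of $n-m$ to decide when restriction to $\AAA$ splits then gives the stated value $\widetilde g_{(n)}=2^{\lfloor(n-2)/2\rfloor}$ and the analogous expression for $\widetilde g_{(n-1,1)}$. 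For $n\geq 8$ one checks directly that $g_{(n-1,1)}\geq 2 g_{(n)}$ and the analogous inequality in $\AAA$, so what remains is the gap assertion: every bar partition $\mu$ of $n$ with $\mu\neq(n)$ satisfies $g_\mu\geq 2 g_{(n)}$.

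I would split the gap argument by $m$. For $m=2$, $\mu=(n-b,b)$ with $2\leq b\leq\lfloor(n-1)/2\rfloor$, the formula collapses to $g_\mu=2^{\lfloor(n-2)/2\rfloor}(n-1)!(n-2b)/(b!(n-b)!)$; a short check on the ratio of consecutive values shows the minimum over $b\geq 2$ is attained at $b=2$ for $n\geq 8$, giving $g_\mu\geq 2^{\lfloor(n-2)/2\rfloor}(n-1)(n-4)/2$, which easily exceeds $2 g_{(n)}$. For $m=3$ the smallest candidate is $\mu=(n-3,2,1)$, and substitution produces a quadratic polynomial in $n$ multiplied by $2^{\lfloor(n-3)/2\rfloor}$ that comfortably beats $2 g_{(n)}$; every other three-part bar partition dominates $(n-3,2,1)$ termwise in the formula.

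For $m\geq 4$ the strategy is induction on $m$: removing the smallest part $\mu_m$ and comparing $g_\mu$ with the spin degree $g_{\mu'}$ of $\mu'=(\mu_1,\dots,\mu_{m-1})$, viewed as a bar partition of $n-\mu_m$, produces a ratio built from a falling factorial $n!/((n-\mu_m)!\,\mu_m!)$, the new gap factors $\prod_{j<m}(\mu_j-\mu_m)/(\mu_j+\mu_m)$, and a power of two differing by at most $1$. For $n\geq 8$ and $m\geq 4$ this ratio exceeds $2$, propagating the inductive bound. Base cases $n\in\{8,9,10,11\}$ would be verified from the character tables of $\SSS$ and $\AAA$.

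The main obstacle is the near-staircase case $\mu=(k,k-1,\dots,2,1)$ and its small perturbations, where the alternating product $\prod_{i<j}(\mu_i-\mu_j)/(\mu_i+\mu_j)$ is extremely small and the inductive step above becomes tight; compensating this loss requires a sharp lower bound on the multinomial coefficient $n!/\prod_i\mu_i!$, which is near-maximal precisely on staircase shapes but needs a delicate arithmetic inequality to exploit. An alternative avoiding this combinatorial pinch is the route actually taken in the cited work, which identifies the basic and second basic spin modules representation-theoretically as the two smallest irreducible supermodules for the Sergeev affine Hecke--Clifford algebra and derives the gap from the branching structure; that approach bypasses the extremal bar-partition analysis at the cost of substantially more machinery.
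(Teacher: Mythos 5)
The paper does not prove this lemma: it is quoted verbatim as Theorem~A of the cited Kleshchev--Tiep paper, so there is no in-house argument to compare against, and your attempt has to be judged on its own merits.

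Your instinct that the bar-formula route breaks down on staircase shapes is correct, and the obstruction you flag is a genuine gap, not merely an inconvenience. Take $\mu=(k,k-1,\dots,2,1)$ with $n=k(k+1)/2$, and remove the smallest part $\mu_m=1$ to obtain $\mu'=(k,\dots,2)$, a bar partition of $n-1$ of length $k-1$. Both $\lfloor(n-m)/2\rfloor$ and $\lfloor((n-1)-(m-1))/2\rfloor$ equal $\lfloor(n-k)/2\rfloor$, so the $2$-power factor is unchanged, and a short computation gives
\[
\frac{g_\mu}{g_{\mu'}} \;=\; n\cdot\prod_{j=1}^{k-1}\frac{\mu_j-1}{\mu_j+1} \;=\; n\cdot\frac{2}{k(k+1)} \;=\; 1 .
\]
Thus the inductive step you describe contributes a dead ratio of $1$ on exactly this family. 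Taking $k=5$, $n=15$: the inductive hypothesis at $n'=14$ gives $g_{\mu'}\ge 2\cdot 2^{\lfloor 13/2\rfloor}=2^{7}$, hence only $g_\mu\ge 2^{7}$, whereas the lemma demands $g_\mu\ge 2\cdot 2^{\lfloor 14/2\rfloor}=2^{8}$ — the induction falls short by a factor of $2$ precisely because the power of $2$ jumps as $n$ passes an odd integer. (The true value $g_{(5,4,3,2,1)}=9152$ is of course far larger, so the statement holds; the point is that your recursion cannot see it without the sharper multinomial lower bound you anticipate needing.) Your closing remark is accurate as well: Kleshchev and Tiep do not argue through extremal bar-partition combinatorics at all but identify the basic and second basic spin modules representation-theoretically and extract the gap from branching, which is precisely why the present paper imports the result rather than reproving it.
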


Using the above results, we easily deduce the following.

\begin{lemma}\label{minimal degrees of Schur covers} The following hold:
\begin{enumerate}
\item If $n\geq 31$, then $d_i(\SSS)=d_i(\Sy_n)$ for $1\leq i\leq 7$.
\item If $n\geq 34$, then $d_i(\AAA)=d_i(\Al_n)$ for $1\leq i\leq 7$.
\end{enumerate}
\end{lemma}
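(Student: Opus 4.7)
The plan is to observe that the seven smallest non-trivial character degrees of $\SSS$ and $\AAA$ are inherited from $\Sy_n$ and $\Al_n$ as soon as $n$ is large enough for the basic spin degree to exceed $d_7(\Sy_n)$. Since the projection $\SSS \twoheadrightarrow \Sy_n$ is a central $2$-extension, the non-faithful irreducible characters of $\SSS$ are in degree-preserving bijection with $\Irr(\Sy_n)$, and likewise for $\AAA$ over $\Al_n$. Hence we have multiset equalities
$$\cd^{\ast}(\SSS) = \cd^{\ast}(\Sy_n) \,\sqcup\, \{\, g_\mu : \mu \text{ a bar partition of } n\,\}, \qquad \cd^{\ast}(\AAA) = \cd^{\ast}(\Al_n) \,\sqcup\, \{\, \widetilde{g}_\mu : \mu \text{ a bar partition of } n\,\}.$$
Consequently, $d_i(\SSS) = d_i(\Sy_n)$ for $1 \le i \le 7$ if and only if $\mathfrak{d}_1(\SSS) > d_7(\Sy_n)$, and analogously for $\AAA$.

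By Lemma~\ref{minimal degrees of Sn}(7) and Lemma~\ref{minimal degrees of An}, $d_7(\Sy_n) = d_7(\Al_n) = n(n-1)(n-2)(n-7)/24$ once $n \ge 22$, while Lemma~\ref{kleshchev-tiep} gives $\mathfrak{d}_1(\SSS) = 2^{\lfloor (n-1)/2 \rfloor}$ and $\mathfrak{d}_1(\AAA) = 2^{\lfloor (n-2)/2 \rfloor}$ for $n \ge 8$. The whole lemma therefore reduces to the two elementary inequalities
$$2^{\lfloor (n-1)/2 \rfloor} > \frac{n(n-1)(n-2)(n-7)}{24} \text{ for } n \ge 31, \qquad 2^{\lfloor (n-2)/2 \rfloor} > \frac{n(n-1)(n-2)(n-7)}{24} \text{ for } n \ge 34.$$

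I would settle these by induction in steps of two, with the left-hand side doubling at each step. The polynomial ratio
$$\rho(n) = \frac{d_7(\Sy_{n+2})}{d_7(\Sy_n)} = \frac{n+2}{n-1} \cdot \frac{n+1}{n-2} \cdot \frac{n-5}{n-7}$$
is a product of three decreasing factors each tending to $1$, and a short computation shows $\rho(31) < 1.32 < 2$. Hence once the inequalities are verified at the base cases $n \in \{31,32\}$ (e.g.\ $2^{15} = 32768 > 26970 = 31 \cdot 30 \cdot 29$) and $n \in \{34,35\}$, they propagate to all larger $n$. No serious obstacle arises here: the combinatorial content is entirely packaged in Lemmas~\ref{minimal degrees of Sn}--\ref{kleshchev-tiep}, and the remaining work is a comparison of an exponential quantity against a quartic polynomial. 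The only point requiring care is that the thresholds $31$ and $34$ are sharp---one checks $2^{14} < d_7(\Sy_{30})$ and $2^{15} < d_7(\Al_{33})$---which is precisely why these specific bounds appear in the statement.
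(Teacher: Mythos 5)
Your proposal is correct and takes essentially the same route as the paper: both reduce the lemma, via Lemmas~\ref{minimal degrees of Sn}, \ref{minimal degrees of An} and \ref{kleshchev-tiep}, to the single inequality $\mathfrak{d}_1(\SSS)=2^{\lfloor (n-1)/2\rfloor}>d_7(\Sy_n)$ (resp.\ $\mathfrak{d}_1(\AAA)>d_7(\Al_n)$), which the paper merely asserts and you verify by a doubling induction. The only nitpick is that your claimed ``if and only if'' is an overstatement (equality of the $d_i$ could in principle survive even if $\mathfrak{d}_1$ coincided with one of the seven ordinary degrees), but only the ``if'' direction is used and that part is sound.
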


\begin{proof} We observe that
$\mathfrak{d}_1(\SSS)=2^{\lfloor(n-1)/2\rfloor}>n(n-1)(n-2)(n-7)/24=d_7(\Sy_n)$
if $n\geq 31$. Therefore part~(1) follows by Lemmas~\ref{minimal
degrees of Sn} and~\ref{kleshchev-tiep}. Similarly, we have
$\mathfrak{d}_1(\AAA)=2^{\lfloor(n-2)/2\rfloor}>n(n-1)(n-2)(n-7)/24=d_7(\Al_n)$
if $n\geq 34$ and thus part~(2) follows.
\end{proof}

\begin{lemma}\label{minimal degrees of Schur cover of An} Let $G$ be
either $\AAA$ or $\SSS$. Then we have:
\begin{enumerate}
\item If $n\geq8$, then $d_1(G)=n-1$.
\item If $n\geq10$, then
$d_2(G)=\min\{n(n-3)/2,\mathfrak{d}_1(G)\}$. Furthermore, if
$n\geq12$, then $d_2(G)>2n$.
\item If $n\geq16$, then $d_3(G)=(n-1)(n-2)/2$ and
$d_4(G)=\min\{n(n-1)(n-5)/6,\mathfrak{d}_1(G)\}$.
\item If $n\geq28$, then $d_4(G)=n(n-1)(n-5)/6$, $d_5(G)=(n-1)(n-2)(n-3)/6$, $d_6(G)=n(n-2)(n-4)/3$, and
$d_7(G)=\min\{n(n-1)(n-2)(n-7)/24,\mathfrak{d}_1(G)\}$.
\end{enumerate}
\end{lemma}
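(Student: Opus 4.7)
The plan is to decompose $\Irr(G)$ into non-faithful characters---pulled back from $\Sy_n$ or $\Al_n$ via the quotient map, hence giving exactly the degrees of those groups---and faithful (spin) characters, and then to merge the two sorted lists of degrees. By Lemma~\ref{kleshchev-tiep}, the smallest spin degree is $\mathfrak{d}_1(G)$ and no spin degree lies in the open interval $(\mathfrak{d}_1(G), 2\mathfrak{d}_1(G))$; provided $\mathfrak{d}_1(G)$ is large compared with the ordinary degrees of interest, the spin part contributes just the single value $\mathfrak{d}_1(G)$ to the bottom of $\cd^\ast(G)$. The lemma then reduces to inserting $\mathfrak{d}_1(G)$ into the sorted list $d_1(\Sy_n) < d_2(\Sy_n) < \cdots$ (or the corresponding list for $\Al_n$) produced by Lemmas~\ref{minimal degrees of Sn} and~\ref{minimal degrees of An}, and reading off the first seven entries.

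Granted this reduction, part~(1) is immediate: Lemma~\ref{kleshchev-tiep} gives $\mathfrak{d}_1(G) \geq 2^{\lfloor(n-2)/2\rfloor} \geq n-1$ for $n \geq 8$, and $d_1(\Sy_n) = d_1(\Al_n) = n-1$ follows from Lemma~\ref{minimal degrees of Sn} together with a direct check of the character tables of $\Al_n$ for $8 \leq n \leq 14$ (the range below the threshold of Lemma~\ref{minimal degrees of An}). For part~(2), Lemma~\ref{minimal degrees of Sn} yields $d_2(\Sy_n) = n(n-3)/2$ for $n \geq 9$, and the analogue for $\Al_n$ when $n \geq 10$ follows from Lemma~\ref{minimal degrees of An} for $n \geq 15$ and from direct computation for $10 \leq n \leq 14$; merging in $\mathfrak{d}_1(G)$ then yields $d_2(G) = \min\{n(n-3)/2, \mathfrak{d}_1(G)\}$. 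The auxiliary bound $d_2(G) > 2n$ for $n \geq 12$ is immediate from $n(n-3)/2 > 2n$ (valid for $n > 7$) and $\mathfrak{d}_1(G) \geq 2^{\lfloor(n-2)/2\rfloor} > 2n$ (valid in that range).

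Parts~(3) and~(4) proceed by the same template. Lemma~\ref{minimal degrees of An} gives $d_i(\Al_n) = d_i(\Sy_n)$ for $i \leq 4$ when $n \geq 15$ and for $i \leq 7$ when $n \geq 22$, so the explicit values in Lemma~\ref{minimal degrees of Sn} apply uniformly to both double covers throughout the ranges $n \geq 16$ and $n \geq 28$. The remaining task is to verify that $\mathfrak{d}_1(G)$ does not displace the claimed values of $d_3, d_5, d_6$ (possible displacements at $d_4$ and $d_7$ are absorbed into the $\min$ in the statement); this reduces to the numerical inequalities $2^{\lfloor(n-2)/2\rfloor} > (n-1)(n-2)/2$ for $n \geq 16$ and $2^{\lfloor(n-2)/2\rfloor} > n(n-2)(n-4)/3$ for $n \geq 28$, both readily confirmed at the boundary and propagated to larger $n$ by the fact that $2^{\lfloor(n-2)/2\rfloor}$ roughly doubles as $n$ increases by $2$ while the competing polynomial grows only cubically. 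The principal obstacle throughout is therefore just the bookkeeping of which ordinary degrees $\mathfrak{d}_1(G)$ may or may not overtake; no ingredient beyond Lemma~\ref{kleshchev-tiep} is conceptually new.
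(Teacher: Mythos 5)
Your proof is correct and follows essentially the same approach as the paper's: decompose $\Irr(G)$ into ordinary characters (pulled back from $\Sy_n$ or $\Al_n$) and spin characters, use Lemma~\ref{kleshchev-tiep} to bound the smallest spin degree $\mathfrak{d}_1(G)$, and merge the sorted degree lists from Lemmas~\ref{minimal degrees of Sn} and~\ref{minimal degrees of An} together with $\mathfrak{d}_1(G)$. The only stylistic difference is that you spell out more explicitly (e.g.\ the case checks for $10\le n\le 14$ for $\Al_n$, and the $\geq$ rather than $>$ at $n=9$ where $\mathfrak{d}_1(\AAA)=8=n-1$) steps that the paper treats as routine observations.
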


\begin{proof} When $n\geq 8$, we see that
$\mathfrak{d}_1(\SSS)\geq\mathfrak{d}_1(\AAA)=2^{\lfloor(n-2)/2\rfloor}>n-1$
and $d_1(\Sy_n)=d_1(\Al_n)=n-1$, which imply that
$d_1(\AAA)=d_1(\SSS)=n-1$ and part~(1) follows. When $n\geq 10$, we
observe that $d_2(\Al_n)=d_2(\Sy_n)=n(n-3)/2$ and part~(2) then
follows from part~(1).

Suppose that $n\geq 16$. It is easy to check that
$2^{\lfloor(n-2)/2\rfloor} >(n-1)(n-2)/2=d_3(\Al_n)=d_3(\Sy_n)$. It
follows that $d_3(\AAA)=d_3(\SSS)=(n-1)(n-2)/2$ and
$d_4(G)=\min\{n(n-1)(n-5)/6,\mathfrak{d}_1(G)\}$, as claimed.

Finally suppose that $n\geq 28$. We check that
$2^{\lfloor(n-2)/2\rfloor}> n(n-2)(n-4)/3=d_6(\Al_n)=d_6(\Sy_n)$ and
part~(4) then follows.
\end{proof}


\section{Some useful lemmas}\label{section-useful lemmas}

We begin with an easy observation.

\begin{lemma}\label{prime number between n and d2} There always exists
a prime number $p$ with $n<p\leq d_2(G)$ for $G=\AAA$ or $\SSS$,
provided that $n\geq 9$.
\end{lemma}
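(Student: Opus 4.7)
The plan is to combine Lemma~\ref{minimal degrees of Schur cover of An}(2) with Bertrand's postulate, which guarantees a prime $p$ with $m < p < 2m$ for every integer $m \geq 1$.

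For the main range $n \geq 12$, Lemma~\ref{minimal degrees of Schur cover of An}(2) directly yields $d_2(G) > 2n$, so Bertrand's postulate immediately supplies a prime $p$ with $n < p < 2n < d_2(G)$, which is what we want.

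For the small cases $n \in \{9, 10, 11\}$, I would verify the claim by direct inspection. When $n = 10$ or $n = 11$, Lemma~\ref{minimal degrees of Schur cover of An}(2) still applies and gives the exact value $d_2(G) = \min\{n(n-3)/2, \mathfrak{d}_1(G)\}$, where $\mathfrak{d}_1(G)$ is taken from Lemma~\ref{kleshchev-tiep}. A short computation then exhibits an appropriate prime: for $n = 10$ one has $d_2(G) = 16$ and $p = 11$ works, while for $n = 11$ one gets $d_2(\SSS) = 32$ and $d_2(\AAA) = 16$, and $p = 13$ works in both cases. When $n = 9$, Lemma~\ref{minimal degrees of Schur cover of An}(2) does not directly apply; instead I would use Lemma~\ref{minimal degrees of Sn} to get $d_2(\Sy_9) = 27$, together with $\mathfrak{d}_1(\SSS_9) = 16$ and $\mathfrak{d}_1(\AAA_9) = 8$ from Lemma~\ref{kleshchev-tiep}, plus the fact (also from that lemma) that no spin degree lies strictly between $\mathfrak{d}_1(G)$ and $2\mathfrak{d}_1(G)$. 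Combining these, no character degree of $G$ lies in $(9, 16)$, hence $d_2(G) \geq 16$, and the prime $p = 11$ again works.

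The only mild obstacle is the boundary value $n = 9$, where the clean formula of Lemma~\ref{minimal degrees of Schur cover of An}(2) is unavailable; but this requires nothing more than a short comparison of the smallest ordinary and spin degrees, and the rest of the argument is a direct appeal to Bertrand's postulate.
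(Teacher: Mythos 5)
Your proof takes essentially the same route as the paper: for $n \geq 12$, Lemma~\ref{minimal degrees of Schur cover of An}(2) gives $d_2(G) > 2n$ and Bertrand's postulate finishes, and the boundary cases $n = 9, 10, 11$ are handled by a direct finite check. The paper simply cites the Atlas for those three values; you reconstruct the relevant degrees from Lemmas~\ref{minimal degrees of Sn}, \ref{kleshchev-tiep} and \ref{minimal degrees of Schur cover of An}. Your computations for $n = 10$ and $n = 11$ are correct.

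There is, however, a small gap in the $n = 9$ case when $G = \AAA$. The non-faithful degrees of $\AAA$ are the degrees of $\Al_9$, not of $\Sy_9$, and here they genuinely differ: the self-conjugate partition $(3,3,3)$ of $9$ has $f_{(3,3,3)} = 42$ and contributes the halved degree $21$ to $\cd(\Al_9)$, so $d_2(\Al_9) = 21 < 27 = d_2(\Sy_9)$. Thus citing $d_2(\Sy_9) = 27$ does not by itself rule out an ordinary degree of $\AAA$ in the interval $(9,16)$; one must also note that the two self-conjugate partitions of $9$, namely $(5,1^4)$ and $(3,3,3)$, give halved degrees $35$ and $21$, both $\geq 16$. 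With that extra observation your conclusion that no degree of $G$ lies in $(9,16)$, hence $d_2(G) \geq 16$ and $p = 11$ works, is correct; the lemma is unaffected, but the step as written was not fully justified for $\AAA$.
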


\begin{proof} It is routine to check the statement for $9\leq n\leq
11$ by using \cite{Atl1}. So we can assume that $n\geq12$. By
Lemma~\ref{minimal degrees of Schur cover of An}, it suffices to
prove that there exists a prime between $n$ and $2n$. However, this
is the well-known Bertrand-Chebyshev
theorem~\cite{Harborth-Kemnitz}.
\end{proof}

The next three lemmas are critical in the proof of
Theorem~\href{theorem_alternating}{A}.

\begin{lemma}\label{number of odd degree characters} Let $S$ be a
simple group of Lie type of rank $l$ defined over a field of $q$
elements with $q$ even. Then
\[|\Irr_{2'}(S)|\geq \left\{\begin {array}{ll}
q^{l}/(l+1,q-1) & \text{ if } S \text{ is of type } A,\\
q^{l}/(l+1,q+1) & \text{ if } S \text{ is of type } \ta A,\\
q^{l}/3 & \text{ if } S \text{ is of type } E_6,\ta E_6,\\
q^l & \text{ otherwise. }
\end {array} \right.\]
\end{lemma}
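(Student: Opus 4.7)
The plan is to reduce Lemma~\ref{number of odd degree characters} to Lusztig's parameterization of $p'$-degree characters of finite reductive groups in the defining characteristic. Let $\widetilde G^F$ denote the universal (simply connected) cover of the finite simple group of Lie type in question, so that $S\cong\widetilde G^F/Z$ where $Z=Z(\widetilde G^F)$. In defining characteristic $p=2$, the odd-degree irreducible characters of $\widetilde G^F$ are precisely the semisimple characters, which are in natural bijection with the semisimple conjugacy classes of the adjoint dual group $G_{\ad}^F$. Since the algebraic adjoint group has trivial (hence connected) center, Steinberg's count of semisimple classes yields
\[
 |\Irr_{2'}(\widetilde G^F)|\;=\;|(G_{\ad}^F)_{\mathrm{ss}}|\;=\;q^l.
\]

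Second, I would inspect $|Z|$ type by type for $q$ even: since $(2,q\pm 1)=1$, one obtains $|Z|=1$ for every type except $A_l$, ${}^2A_l$, $E_6$, and ${}^2E_6$, and in those four cases
\[
 |Z|\;=\;(l+1,q-1),\;(l+1,q+1),\;(3,q-1),\;(3,q+1),
\]
respectively. In every other type $\widetilde G^F$ is already simple, so $S=\widetilde G^F$ and $|\Irr_{2'}(S)|=q^l$; this settles the last case of the lemma.

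For the four central types, $\Irr_{2'}(S)$ consists of those $\chi\in\Irr_{2'}(\widetilde G^F)$ whose central character $\lambda_\chi\in\widehat Z$ is trivial. Under the Lusztig correspondence, the map $\chi\mapsto\lambda_\chi$ descends to a natural surjection from the set of semisimple classes of $G_{\ad}^F$ onto the component group $G_{\ad}^F/[G_{\ad}^F,G_{\ad}^F]\cong\widehat Z$, which may be identified with the Lang--Steinberg cokernel of the isogeny $\widetilde G\to G_{\ad}$. A counting argument in the adjoint group then shows that the fiber over $1_Z$ contains at least $q^l/|Z|$ semisimple classes, and specializing $|Z|$ to the four central orders above delivers the four lower bounds claimed in the lemma.

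The main obstacle is the fiber-size estimate in the last step: although the $|Z|$ fibers have total size $q^l$, a pigeonhole bound alone does not locate the average in the identity fiber. The argument must exploit that $1_Z$ is distinguished among $\widehat Z$, typically through Galois-invariance or through the fact that the semisimple classes lying in the image of $\widetilde G^F\subseteq G_{\ad}^F$ are generically richer than those in the outer cosets; tracking this carefully for each of the four types is where the real work lies.
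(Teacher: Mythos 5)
Your approach differs from the paper's, and the "main obstacle" you flag at the end is a genuine gap that you do not close. You realize $S$ as the quotient $\widetilde G^F/Z$ of the simply connected group and try to count odd-degree characters of $\widetilde G^F$ with trivial central character; showing that the identity fiber of $\chi\mapsto\lambda_\chi$ contains at least $q^l/|Z|$ characters is exactly what is missing, and as you note, pigeonhole alone cannot locate the identity as the heavy fiber. There is also a subtlety earlier in the argument: when the center of $\widetilde G$ is disconnected, the dual group $G_{\ad}$ has non-simply-connected derived group, so semisimple centralizers in $G_{\ad}^F$ may be disconnected and the parameterization of semisimple characters of $\widetilde G^F$ by semisimple classes of $G_{\ad}^F$ is not the clean bijection you invoke; the equality $|\Irr_{2'}(\widetilde G^F)|=q^l$ therefore needs more justification than Steinberg's count supplies.

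The paper avoids both problems by flipping the isogeny and working with the adjoint group $S_{sc}^*=G_{\ad}^F$, which contains $S$ as a \emph{normal subgroup} rather than as a quotient. By Steinberg's theorem (via Brunat), the simply connected group $S_{sc}^F$ has exactly $q^l$ semisimple classes $s$, and Lusztig's theory attaches to each one a semisimple character of the dual $G_{\ad}^F$ of degree $|S_{sc}|_{2'}/|C_{S_{sc}}(s)|_{2'}$, which is odd; hence $|\Irr_{2'}(G_{\ad}^F)|\ge q^l$. Here the count is on the side where centralizers are connected, so there is no ambiguity. Since $q$ is even, the index $[G_{\ad}^F:S]$ is the odd number $(l+1,q-1)$, $(l+1,q+1)$, or $(3,q\mp1)$ in the four relevant types, and $G_{\ad}^F/S$ is cyclic, so Clifford theory gives the uniform bound $|\Irr_{2'}(G_{\ad}^F)|\le [G_{\ad}^F:S]\cdot|\Irr_{2'}(S)|$: every odd-degree character of $G_{\ad}^F$ lies over an odd-degree character of $S$, and each such character of $S$ supports at most $[G_{\ad}^F:S]$ of them. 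Combining the two inequalities yields the lemma with no distinguished-fiber estimate needed. To rescue your route you would essentially have to rederive this Clifford-theoretic bound in disguise.
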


\begin{proof} Let $S_{sc}$ be the finite Lie type group of
simply-connected type corresponding to~$S$.
By~\cite[Corollary~3.6]{Brunat}, $S_{sc}$ has $q^l$ semisimple
conjugacy classes. To each semisimple class $s$ of $S_{sc}$,
Lusztig's classification of complex characters of finite groups of
Lie type says that there corresponds a semisimple character of the
dual group, say $S_{sc}^\ast$, of $S_{sc}$ of degree
$$|S_{sc}|_{2'}/|\bC_{S_{sc}}(s)|_{2'}\:.$$
This means that the dual $S_{sc}^\ast$ of $S_{sc}$ has at least
$q^l$ irreducible characters of odd degree.

If $S$ is of type $A$, we have
$S^\ast_{sc}=\mathrm{PGL}_{l+1}(q)=S.(l+1,q-1)$ and the lemma
follows for linear groups. A similar argument works for unitary
groups and $E_6$ as well as $\ta E_6$. If $S$ is not of these types,
we will have $S=S_{sc}=S_{sc}^\ast$ and the lemma also follows.
\end{proof}

\begin{lemma}\label{number of odd degree characters of Sn} Let
$n\geq5$ and let $n=2^{k_1}+2^{k_2}+\cdots+2^{k_t}$ be the binary
expansion of~$n$ with $k_1>k_2>\cdots>k_t\geq0$. Then
$$|\Irr_{2'}(\AAA)|\leq|\Irr_{2'}(\SSS)|= 2^{k_1+k_2+\cdots+k_t} \:.$$
\end{lemma}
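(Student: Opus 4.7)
The plan is to reduce the problem to ordinary characters by ruling out odd-degree spin characters for $n \geq 5$, then to invoke Macdonald's classical count on $\Sy_n$ and compare $\Al_n$ to $\Sy_n$ by Clifford theory.

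For the spin reduction, using $g_\mu = 2^{\lfloor(n-m)/2\rfloor}\bar g_\mu$, odd degree would require $n - m \leq 1$. Since a bar partition with $m$ distinct positive parts satisfies $m(m+1)/2 \leq n$, the case $n - m = 0$ never arises for $n \geq 2$, and $n - m = 1$ would force $m = n-1$ distinct positive parts summing to $n$, whose minimum sum $n(n-1)/2$ already exceeds $n$ for $n \geq 4$. Hence every spin character of $\SSS$ has even degree for $n \geq 4$. On $\AAA$ the extra case comes when $n - m$ is even, giving $\widetilde g_\mu = 2^{(n-m)/2-1}\bar g_\mu$, which is odd only if $(n-m)/2 = 1$, i.e.\ $m = n-2$; but then the minimum sum $(n-2)(n-1)/2$ exceeds $n$ precisely when $n \geq 5$. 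So no spin character of $\AAA$ has odd degree either for $n \geq 5$, and we obtain $|\Irr_{2'}(\SSS)| = |\Irr_{2'}(\Sy_n)|$ and $|\Irr_{2'}(\AAA)| = |\Irr_{2'}(\Al_n)|$.

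The equality $|\Irr_{2'}(\Sy_n)| = 2^{k_1+\cdots+k_t}$ is Macdonald's classical theorem; alternatively one can derive it from the facts that the Sylow $2$-subgroup $P$ of $\Sy_n$ is a direct product of iterated wreath products $\ZZ_2 \wr \cdots \wr \ZZ_2$ (with $k_i$ factors in the $i$-th term) whose abelianizations are elementary abelian of ranks $k_i$, that $P$ is self-normalizing in $\Sy_n$, and that the McKay conjecture holds for $\Sy_n$ at $p=2$, which together give $|\Irr_{2'}(\Sy_n)| = |P/[P,P]| = 2^{k_1+\cdots+k_t}$.

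For the inequality $|\Irr_{2'}(\Al_n)| \leq |\Irr_{2'}(\Sy_n)|$ I would apply Clifford theory to the index-$2$ inclusion $\Al_n \lhd \Sy_n$. A key preliminary observation is that every self-conjugate partition $\lambda$ of $n$ has $f_\lambda$ even, since $f_\lambda/2$ must be the integer degree of an irreducible $\Al_n$-character. Writing $a$ for the number of unordered non-self-conjugate pairs $\{\lambda,\bar\lambda\}$ with $f_\lambda$ odd, and $c$ for the number of self-conjugate $\lambda$ with $f_\lambda \equiv 2 \pmod{4}$, a direct count via Clifford theory yields $|\Irr_{2'}(\Sy_n)| = 2a$ and $|\Irr_{2'}(\Al_n)| = a + 2c$, so the lemma reduces to the inequality $a \geq 2c$. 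This last inequality is the main obstacle: I would prove it either by comparing abelianizations of the normalizers of Sylow $2$-subgroups in $\Sy_n$ and $\Al_n$ via McKay for both groups (keeping in mind that $P_{\Al_n}$ is in general \emph{not} self-normalizing in $\Al_n$, so one must work with the actual normalizer), or by appealing to an explicit formula for $|\Irr_{2'}(\Al_n)|$ in terms of the binary expansion of $n$ in the spirit of Macdonald's result.
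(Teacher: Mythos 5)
Your spin-reduction is correct and is essentially the argument the paper gives, with slightly different bookkeeping: the paper observes that $n - m \geq 3$ for every bar partition of $n \geq 5$ with $m$ parts, so $2^{\lfloor (n-m)/2\rfloor} \geq 2$, and then handles the boundary case $n - m = 3$ of $\widetilde{g}_\mu$ explicitly, whereas you separately rule out $n - m \leq 1$ for $\SSS$ and $n - m = 2$ for $\AAA$. Both routes are sound and give $|\Irr_{2'}(\SSS)| = |\Irr_{2'}(\Sy_n)|$ and $|\Irr_{2'}(\AAA)| = |\Irr_{2'}(\Al_n)|$. The appeal to Macdonald's formula for $|\Irr_{2'}(\Sy_n)|$ also matches the paper.

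The genuine gap is the inequality $|\Irr_{2'}(\Al_n)| \leq |\Irr_{2'}(\Sy_n)|$. Your Clifford-theory bookkeeping is correct: writing $a$ for the number of non-self-conjugate pairs with odd $f_\lambda$ and $c$ for the number of self-conjugate $\lambda$ with $f_\lambda \equiv 2 \pmod{4}$, one has $|\Irr_{2'}(\Sy_n)| = 2a$ and $|\Irr_{2'}(\Al_n)| = a + 2c$, so the claim reduces to $a \geq 2c$. But you then stop, naming this the main obstacle and merely sketching two possible strategies (a normalizer comparison via the McKay conjecture for both groups, or an explicit $\Al_n$ analogue of Macdonald's count) without carrying either out. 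The paper disposes of this step by a direct citation to McKay's paper \cite{Mckay}, where the comparison is recorded. So the proposal is incomplete as written: to finish you either need to invoke that reference or actually prove $a \geq 2c$, and the latter is not immediate from first principles.
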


\begin{proof} If $\mu=(\mu_1,\mu_2,...,\mu_m)$ is a bar partition of
$n$, then the $2$-part of the spin character degree $g_\mu$ of
$\SSS$ labeled by $\mu$ is at least
$$2^{\lfloor(n-m)/2\rfloor},$$ which is at least
$2$ as $n\geq5$ and $n-m\geq 3$. We note that if $n-m=3$ then
$\widetilde{g}_\mu=g_\mu$. Therefore, the $2$-part of the degree
$\widetilde{g}_\mu$ of $\AAA$ is at least $2$ as well. In
particular, we see that every spin character degree of $\AAA$ as
well as $\SSS$ is even. It follows that
$$|\Irr_{2'}(\AAA)|=|\Irr_{2'}(\Al_n)| \text{ and } |\Irr_{2'}(\SSS)|=|\Irr_{2'}(\Sy_n)| \:.$$

As mentioned in~\cite{Mckay}, the number of odd degree irreducible
characters of $\Al_n$ does not exceed that of $\Sy_n$. Now the lemma
follows from the formula for the number of odd degree characters of
$\Sy_n$ given in~\cite[Corollary~1.3]{Macdonald}.
\end{proof}

\begin{lemma}\label{technical lemma for PSL and PSU -Theorem A} Let
$n=2^{k_1}+2^{k_2}+\cdots+2^{k_t}$ be the binary expansion of $n$
with $k_1>k_2>\cdots>k_t\geq0$.
\begin{enumerate}
\item If $k_1+k_2+\cdots+k_t\geq\sqrt{(n-3)/2}$, then
$n<2^{15}$;
\item if $k_1+k_2+\cdots+k_t\geq\sqrt{n-3}-3$, then $n<2^{13}$;
\item if $k_1+k_2+\cdots+k_t\geq{(n-3)/18}$, then $n<2^{10}$;
\item if $k_1+k_2+\cdots+k_t\geq{(n-3)/30}$, then $n<2^{11}$.
\end{enumerate}
\end{lemma}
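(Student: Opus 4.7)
The plan is to combine two elementary observations: since $k_1>k_2>\cdots>k_t\geq 0$ are distinct nonnegative integers, one has
\[
s := k_1+k_2+\cdots+k_t \leq 0+1+\cdots+k_1 = \frac{k_1(k_1+1)}{2},
\]
and trivially $n \geq 2^{k_1}$. Inserting these two bounds into the hypothesis of each of the four parts will produce an inequality in $k_1$ alone; comparing it with $2^{k_1} \leq n$ will then force $k_1$ (and hence $n$) to be bounded.

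For part~(1), I would square the hypothesis to get $(n-3)/2 \leq k_1^2(k_1+1)^2/4$, hence $n \leq k_1^2(k_1+1)^2/2+3$; together with $n \geq 2^{k_1}$ this is inconsistent as soon as $k_1 \geq 15$ (since $2^{15}=32768>15^2\cdot 16^2/2+3 = 28803$, and the exponential then dominates the polynomial), giving $k_1 \leq 14$ and $n<2^{15}$. Parts~(3) and~(4) proceed in the same way: the analogous inequalities reduce to $2^{k_1}\leq 9k_1(k_1+1)+3$ and $2^{k_1}\leq 15k_1(k_1+1)+3$, which first fail at $k_1=10$ and $k_1=11$ respectively, yielding $n<2^{10}$ and $n<2^{11}$.

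The hard part is~(2). The same template only gives $2^{k_1}\leq (k_1(k_1+1)/2+3)^2+3$, which is still consistent at $k_1=13$ (since $8192\leq 8839$) and first fails at $k_1=14$; this alone delivers only $n<2^{14}$, off by one power of~$2$. To close the gap I would treat the residual case $k_1=13$ separately. In that case $s\leq 91$, so the hypothesis $s\geq\sqrt{n-3}-3$ forces $\sqrt{n-3}\leq 94$ and hence $n\leq 8839$. Writing $n=2^{13}+m$, we then have $m\leq 647<2^{10}$, so all the set bits of $m$ lie in positions $\{0,1,\ldots,9\}$ and their index-sum is at most $0+1+\cdots+9=45$. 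This yields $s\leq 13+45=58$, but the hypothesis demands $s\geq\sqrt{n-3}-3\geq\sqrt{2^{13}-3}-3>87$, a contradiction. Hence $k_1\leq 12$ and $n<2^{13}$, completing part~(2). The main obstacle is precisely this extra pass: the naive one-shot contradiction loses a factor of $2$ in the bound, and must be refined by re-examining the narrow leftover interval $[2^{13},8839]$.
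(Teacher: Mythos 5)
Your proof is correct and follows essentially the same route as the paper: bound the index sum by $k_1+\cdots+k_t\leq 0+1+\cdots+k_1=k_1(k_1+1)/2$ where $k_1=\lfloor\log_2 n\rfloor$, then play this off against each hypothesis and the trivial bound $n\geq 2^{k_1}$ to force $n$ small. The one genuine refinement is in part~(2): the paper's analytic comparison only delivers $n<2^{14}$ and disposes of the remaining window $2^{13}\leq n<2^{14}$ by "direct computations," whereas your secondary argument --- noting that for $k_1=13$ the bound $n\leq 8839$ forces $n-2^{13}\leq 647<2^{10}$, so the lower set bits of $n$ sit in positions $\leq 9$, driving the index sum down to at most $13+45=58$ versus the required $>87$ --- closes that gap cleanly without any case enumeration. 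The trade is a slightly longer proof in exchange for a fully analytic argument; both are fine, and your version is arguably more self-contained. (Your checks at the threshold values $k_1=15,10,11$ for parts (1), (3), (4), together with the remark that the exponential continues to dominate, are exactly what the paper's "proved similarly" is gesturing at.)
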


\begin{proof} We only give here a proof for part~(2).
The other statements
are proved similarly. As $n=2^{k_1}+2^{k_2}+\cdots+2^{k_t}$, we get
$k_1= \lfloor \log_2n\rfloor$ and hence $$k_1+k_2+\cdots+k_t\geq
\lfloor \log_2n\rfloor(\lfloor \log_2n\rfloor+1)/2 \:.$$ However, it is
easy to check that $\sqrt{(n-3)}-3>\lfloor \log_2n\rfloor(\lfloor
\log_2n\rfloor+1)/2$ if $n\geq 2^{14}$. For $2^{13}\leq n<2^{14}$,
the statement follows by direct computations.
\end{proof}

The following lemma is probably known but we include a
short proof for the reader's convenience. It will be needed in the
proof of Theorem~\href{theorem_symmetric}{C}.

\begin{lemma}\label{comparing number of classes of An and Sn} Let
$k(\AAA)$ and $k(\SSS)$ denote the number of conjugacy classes of
$\AAA$ and~$\SSS$, respectively. Then
$$k(\SSS)<2k(\AAA) \:.$$
\end{lemma}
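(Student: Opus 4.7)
My plan is to exploit the Clifford theory of the index-$2$ embedding $\AAA \lhd \SSS$. Let $\varepsilon\in\Irr(\SSS)$ denote the unique non-trivial linear character with kernel $\AAA$. By the usual Clifford dichotomy, each $\chi\in\Irr(\SSS)$ belongs to exactly one of two types: type~(i), where $\chi|_\AAA$ is irreducible, $\chi\ne\chi\varepsilon$, and $\{\chi,\chi\varepsilon\}$ is a pair of distinct $\SSS$-irreducibles sharing the same $\AAA$-restriction; or type~(ii), where $\chi\varepsilon=\chi$ and $\chi|_\AAA=\psi_1+\psi_2$ splits as a sum of two distinct $\AAA$-irreducibles of equal degree. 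That these cases are exhaustive and mutually exclusive is a one-line Frobenius reciprocity check on $\Ind_\AAA^\SSS(\chi|_\AAA)$.

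Letting $a$ count the Clifford pairs of type~(i) and $b$ count the $\varepsilon$-fixed characters of type~(ii), I read off
\[
k(\SSS) \;=\; 2a+b, \qquad k(\AAA) \;=\; a+2b,
\]
so that $2k(\AAA)-k(\SSS)=3b$. The desired inequality $k(\SSS)<2k(\AAA)$ is therefore equivalent to the existence of a single character of type~(ii), i.e.\ to $b\geq 1$.

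To exhibit such a character I use the ordinary (non-faithful) character $\chi_\lambda$ of $\SSS$ inflated from the $\Sy_n$-character attached to a self-conjugate partition $\lambda=\overline\lambda$ of $n$: since tensoring with $\varepsilon$ on $\SSS$ corresponds to tensoring with the sign character on $\Sy_n$, one has $\chi_\lambda\varepsilon=\chi_{\overline\lambda}=\chi_\lambda$, and as recalled in Section~2 the restriction $\chi_\lambda|_{\Al_n}$---hence $\chi_\lambda|_\AAA$---splits into two distinct irreducibles of equal degree. Self-conjugate partitions of $n$ exist for every $n\geq 3$ (e.g.\ the hook $(k+1,1^k)$ when $n=2k+1$ and the symmetric partition $(k,2,1^{k-2})$ when $n=2k\geq 6$), so $b\geq 1$ for all $n\geq 5$. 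The proof is essentially a Clifford-theoretic accounting combined with a concrete existence statement for self-conjugate partitions; no serious obstacle arises, which is consistent with the authors' remark that the lemma is probably known.
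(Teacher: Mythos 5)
Your proof is correct and follows essentially the same accounting as the paper's: your Clifford-theoretic quantities $(a,b)$ (pairs of $\varepsilon$-twisted characters, and $\varepsilon$-fixed characters) correspond exactly to the paper's $(b+d,\,a+c)$, where the paper counts pairs of non-self-conjugate partitions, pairs of spin characters with $n-m$ odd, self-conjugate partitions, and bar partitions with $n-m$ even, and both arrive at the identity $2k(\AAA)-k(\SSS)=3\cdot(\text{number of }\varepsilon\text{-fixed characters of }\SSS)$. The only cosmetic difference is that you phrase the bookkeeping abstractly via Clifford theory for the index-$2$ pair $\AAA\lhd\SSS$ and then exhibit a self-conjugate partition to conclude positivity, whereas the paper reads off the same identity directly from the explicit parametrizations of $\Irr(\SSS)$ and $\Irr(\AAA)$ by partitions and bar partitions.
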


\begin{proof} Let $\Irr_{\text{faithful}}(G)$ and $\Irr_{\text{non-faithful}}(G)$
denote the sets of faithful irreducible characters and non-faithful
irreducible characters, respectively, of a group $G$. Let $a$ and
$b$ be the numbers of self-conjugate partitions and of pairs of
non-self-conjugate partitions, respectively, of $n$. Also, let $c$
and $d$ be the numbers of bar partitions of $n$ with $n-m$ even and
odd, respectively. We have
$$|\Irr_{\text{non-faithful}}(\SSS)|=k(\Sy_n)=a+2b \text{ and }
|\Irr_{\text{faithful}}(\SSS)|=c+2d$$ and
$$|\Irr_{\text{non-faithful}}(\AAA)|=k(\Al_n)=2a+b \text{ and }
|\Irr_{\text{faithful}}(\AAA)|=2c+d \:.$$ Therefore,
$$k(\SSS)=a+2b+c+2d \text{ and } k(\AAA)=2a+b+2c+d$$ and the lemma
follows.
\end{proof}


\section{Complex group algebra of $\AAA$ -
Theorem~\href{theorem_alternating}{A}}\label{section-theoremA}

The aim of this section is to prove
Theorem~\href{theorem_alternating}{A}. Let $G$ be a finite group
such that $\CC G\cong\CC\AAA$. Then $G$ has exactly one linear
character, which is the trivial one, so that $G$ is perfect. Let $M$
be a maximal normal subgroup of $G$. We then have that $G/M$ is
non-abelian simple and moreover
\[\cd^\ast(G/M)\subseteq \cd^\ast(G)=\cd^*(\AAA).\]
To prove the theorem, it is clear that we first have to show
$G/M\cong \Al_n$. We will work towards this aim.

\begin{proposition}\label{proposition_alternating} Let $S$ be a non-abelian simple group such that
$\cd^\ast(S)\subseteq \cd^\ast(\AAA)$. Then $S$ is isomorphic to
$\Al_n$ or
to a simple group of Lie type in even characteristic.
\end{proposition}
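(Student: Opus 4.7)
The plan is to invoke the classification of finite simple groups and rule out, in turn, every family of $S$ except $\Al_n$ and simple groups of Lie type in even characteristic. The central input is the inclusion $\cd^\ast(S)\subseteq\cd^\ast(\AAA)$: every prime power character degree of $S$ must appear in $\cd^\ast(\AAA)$, and the latter is extremely restricted by Lemmas~\ref{prime power degrees of An} and~\ref{prime power degrees of Schur cover of An}; the low-degree information gathered in Section~\ref{section-low degree of groups} provides the complementary tool.

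First I would dispose of the sporadic groups (and the Tits group) by direct inspection of their character tables in the ATLAS: each has several prime-power irreducible character degrees, and matching them against the short lists of Lemmas~\ref{prime power degrees of An} and~\ref{prime power degrees of Schur cover of An} leaves only finitely many candidates for~$n$, all of which can then be excluded via Lemma~\ref{minimal degrees of Schur cover of An}. For $S=\Al_m$ with $m\neq n$, the two smallest non-trivial character degrees of $S$, namely $m-1$ and $m(m-3)/2$, must both lie in $\cd^\ast(\AAA)$; combining this with Lemmas~\ref{minimal degrees of Schur covers} and~\ref{minimal degrees of Schur cover of An} pins down $m=n$, a contradiction.

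The heart of the argument is the case where $S$ is a simple group of Lie type in odd characteristic $q$. Its Steinberg character has degree $q^N$, where $N$ is the number of positive roots, which is an odd prime power exceeding~$1$. By Lemma~\ref{prime power degrees of An}, this value must either be one of the small exceptional entries listed there, or else be of the form $n-1$; in the generic situation this forces $n=q^N+1$. With $n$ determined by the type of $S$ and by~$q$, I would bring in a further character degree of $S$ — a low-dimensional unipotent or semisimple degree — and compare it with the second, third and higher low degrees of $\AAA$ coming from Lemma~\ref{minimal degrees of Schur cover of An}. Running through the classical families of types $A$, $\ta A$, $B$, $C$, $D$, $\ta D$ and the exceptional types, this comparison should yield a contradiction in each case, leaving only even characteristic Lie type groups and $\Al_n$ itself.

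The principal obstacle is precisely this odd-characteristic Lie type step. Once $n=q^N+1$ is enforced, individual families still need to be handled separately: very low rank groups such as $\PSL_2(q)$ or $\PSU_3(q)$ have few prime-power character degrees, so their elimination relies less on the prime-power list of Lemma~\ref{prime power degrees of An} and more on pushing additional unipotent or semisimple degrees against the tight low-degree window from Lemmas~\ref{minimal degrees of Sn}--\ref{minimal degrees of Schur cover of An}. A bounded range of small $n$, where those low-degree lemmas do not yet apply, and the small exceptional groups, must be treated by direct ATLAS verification. Once this case analysis is complete, the only surviving possibilities are $S\cong\Al_n$ and $S$ of Lie type in even characteristic, as claimed.
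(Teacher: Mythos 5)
Your overall plan — invoke CFSG, split into sporadic/alternating/Lie-type cases, and use the prime-power degree lemmas together with the low-degree lemmas — is the same strategy the paper follows, and the sporadic and $\Al_m$ cases are handled in the same spirit (though for $\Al_m$ the paper, after deducing $m>n$ from $m-1\ge d_1(\AAA)=n-1$, just compares orders: $|\Al_m|>2|\Al_n|=|\AAA|$ already contradicts $\cd^\ast(S)\subseteq\cd^\ast(\AAA)$, so the second low degree $m(m-3)/2$ is not needed). Where you diverge substantially is precisely the step you flag as the ``principal obstacle'': odd-characteristic Lie type groups. You propose to pin down $n=q^N+1$ from the Steinberg degree and then run a family-by-family comparison of low unipotent/semisimple degrees of $S$ against $d_2(\AAA), d_3(\AAA),\dots$. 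The paper avoids all of that casework with a single clean observation: by \cite[Lemma~8]{Tong-Viet3}, every finite simple group of Lie type has a non-trivial irreducible character degree strictly smaller than its Steinberg degree $|S|_p$. Once one knows $|S|_p$ must equal $n-1=d_1(\AAA)$ (which is forced by Lemmas~\ref{prime power degrees of An} and~\ref{prime power degrees of Schur cover of An}, since an odd prime power degree of $\AAA$ can only be a non-faithful degree), this immediately gives $d_1(S)<d_1(\AAA)$, contradicting $\cd(S)\subseteq\cd(\AAA)$ — no rank- or type-dependent analysis is needed at all. You should know that this shortcut exists; it is the reason this particular case is not actually the hard one in the paper (the genuinely hard case, deferred to Proposition~\ref{proposition-alternating2}, is even characteristic, where $|S|_p$ can match the $2$-power spin degrees). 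Your sketch, as written, leaves the odd-characteristic elimination as an unfinished multi-family computation, so you would need to either carry out that casework in full or, better, cite the uniform bound $d_1(S)<|S|_p$.
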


\begin{proof} We will eliminate other possibilities for $S$ by using
the classification of finite simple groups. If $5\leq n\leq 9$, then
the set of prime divisors of $S$ is contained in that of $\AAA$,
which in turn is contained in $\{2,3,5,7\}$, hence by using
\cite[Theorem~III]{Huppert-Lempken} and~\cite{Atl1}, the result
follows easily. From now on we assume that $n\geq10$.

\medskip

(i) Alternating groups: Suppose that $S=\Al_m$ with $5\leq m\neq n$.
Since $\cd(A_m)\subseteq\cd(\AAA)$, we get $d_1(\Al_m)\geq
d_1(\AAA)$. As $d_1(\AAA)=n-1\geq 9$ by Lemma~\ref{minimal degrees
of Schur cover of An}, it follows that $d_1(\Al_m)\geq 9$. Thus
$m\geq10$, and so $m-1=d_1(\Al_m)\geq d_1(\AAA)=n-1$. In particular,
we have $m> n$ as $m\neq n$. It follows that $|S|>2|\Al_n|$ and this
violates the hypothesis that $\cd^\ast(S)\subseteq \cd^\ast(\AAA)$.

\medskip

(ii) Simple groups of Lie type in odd characteristic: Suppose that
$S=G(p^k)$, a simple group of Lie type defined over a field of $p^k$
elements with $p$ odd. Since $|S|_p$ is the degree of the Steinberg
character of $S$, we have $|S|_p\in\cd(\AAA)$. As $|S|_p$ is an odd
prime power, Lemma~\ref{prime power degrees of Schur cover of An}
implies that $|S|_p$ must be the degree of a non-faithful character
of $\AAA$. In other words, $|S|_p\in\cd(\Al_n)$. Using
Lemma~\ref{prime power degrees of An}, we deduce that $|S|_p=n-1$.
Hence, $|S|_p=d_1(\AAA)$ is the smallest non-trivial degree of
$\AAA$ by Lemma~\ref{minimal degrees of Schur cover of An}. However,
by~\cite[Lemma~8]{Tong-Viet3} we have $d_1(S)<|S|_p=d_1(\AAA)$,
which is impossible as $\cd(S)\subseteq \cd(\AAA)$.

\medskip

(iii) Sporadic simple groups and the Tits group: Using~\cite{GAP4},
we can assume that $n\geq 14$. To eliminate these cases, observe
that $n\geq \max\{p(S),14\}$, where $p(S)$ is the largest prime
divisor of $|S|$, and that $d_i(S)\geq d_i(\AAA)$ for all $i\geq 1$.
With this lower bound on $n$, we find the lower bounds for
$d_i(\AAA)$ with $1\leq i\leq 7$ using Lemmas~\ref{minimal degrees
of Schur covers} and \ref{minimal degrees of Schur cover of An}.
Choose $i\in\{2,3,\cdots, 7\}$ such that $d_i(\AAA)>d_j(S)$ for some
$j\geq 1$ such that $|i-j|$ is minimal. If $j\geq i$, then we obtain
a contradiction. If $j<i$, then $d_j(S)\in
\{d_k(\AAA)\}_{k=j}^{i-1}$.  Solving these equations for $n$, we
then obtain that either these equations have no solution, or that,
for each solution of $n$, we can find some $k\geq 1$ with
$d_k(\AAA)>d_k(S)$. As an example, assume that $S=O'N$. Then
$n\geq31$ since $p(S)=31$. We have
$d_7(\AAA)=n(n-1)(n-2)(n-7)/24\geq 26970$. As
$d_4(S)=26752<d_7(\AAA)$, it follows that
$d_4(S)\in\{d_4(\AAA),d_5(\AAA),d_6(\AAA)\}$. However, one can check
that these equations have no integer solutions.
\end{proof}

\begin{proposition}\label{proposition-alternating2} Let $S$ be a non-abelian simple group such
that $|S|\mid n!$ and $\cd^\ast(S)\subseteq \cd^\ast(\AAA)$. Then
$S\cong\Al_n$.
\end{proposition}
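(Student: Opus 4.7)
By Proposition~\ref{proposition_alternating}, it suffices to rule out the case that $S$ is a simple group of Lie type in even characteristic. The plan is to write $S$ as a finite group of Lie type of untwisted rank $l$ over $\mathbb{F}_q$ with $q=2^f$, let $N$ denote the number of positive roots of the corresponding root system, and combine two families of inequalities whose interaction will force $n$ to be bounded.

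On one side, because $\cd^\ast(S)\subseteq\cd^\ast(\AAA)$ restricts to an inclusion on odd degrees, Lemmas~\ref{number of odd degree characters} and~\ref{number of odd degree characters of Sn} together give
\[
\frac{q^l}{c}\leq|\Irr_{2'}(S)|\leq|\Irr_{2'}(\AAA)|\leq 2^{k_1+k_2+\cdots+k_t},
\]
with $c\in\{1,3,(l+1,q-1),(l+1,q+1)\}$ according to the Lie type of $S$. On the other side, the divisibility $|S|\mid n!$ forces $q^N=|S|_2\leq|n!|_2$, and hence $fN\leq n$. Since $N$ is a known quadratic function of $l$ (for example $N=l(l+1)/2$ for types $A_l$ and $\ta A_l$, with similar quadratic expressions for the other classical types and bounded constants for the exceptional types), eliminating $l$ and $f$ between these two estimates forces $k_1+\cdots+k_t$ to grow at least as fast as $\sqrt{n}$, or even linearly in $n$ in the exceptional cases. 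Lemma~\ref{technical lemma for PSL and PSU -Theorem A} will then confine $n$ to a bounded range, typically $n<2^{15}$ in the hardest case and much smaller for the exceptional types.

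For each of the finitely many surviving pairs $(S,n)$, I would argue as in part~(iii) of Proposition~\ref{proposition_alternating}: the Steinberg degree $q^N$ is a $2$-power lying in $\cd(\AAA)$, so by Lemmas~\ref{prime power degrees of An} and~\ref{prime power degrees of Schur cover of An} it must equal $n-1$ (with $n-1$ itself a $2$-power), the basic spin degree $2^{\lfloor(n-2)/2\rfloor}$, the second spin value $2^{2^{r-1}+r-1}$ when $n=2^r+2$, or one of a handful of sporadic entries. Pinning $q^N$ to one of these shapes, then cross-checking the next few degrees of $S$ against those in Lemma~\ref{minimal degrees of Schur cover of An} and verifying $|S|\mid n!$, should rule out each candidate by elementary arithmetic or direct inspection in~\cite{Atl1,GAP4}. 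The main obstacle will be the types $A_l(q)$ and $\ta A_l(q)$, where the factor $(l+1,q\mp 1)$ in the denominator weakens the odd-degree count; here the sharper estimates~(3)--(4) of Lemma~\ref{technical lemma for PSL and PSU -Theorem A}, together with the Steinberg constraint and possibly one further explicit non-unipotent character degree of $S$, will be needed to close the gap.
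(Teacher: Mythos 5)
Your bounding argument has a genuine gap: the two families of inequalities you propose to ``eliminate $l$ and $f$'' between both bound quantities from \emph{above}, and two upper bounds cannot be played against each other to force $k_1+\cdots+k_t$ to be large. The odd-degree count gives $q^l/c\le 2^{k_1+\cdots+k_t}$, i.e.\ $k_1+\cdots+k_t\ge fl-\log_2 c$, which is useful only if $fl$ is already known to be large. But from $|S|\mid n!$ you obtain only $q^N=|S|_2\le(n!)_2$, hence $fN\le n$; with $N\sim l^2$ this is an \emph{upper} bound on $fl^2$, and nothing prevents $f$ and $l$ from both being small, in which case both of your constraints are vacuously satisfied and $n$ is not bounded at all.

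The missing ingredient is a \emph{lower} bound on $|S|_2$, and the paper extracts it from the Steinberg constraint \emph{before} bounding $n$, whereas you invoke it only afterward in your final paragraph. Since $|S|_2$ is the Steinberg degree, it lies in $\cd(\AAA)$ and is a power of~$2$; by Lemmas~\ref{prime power degrees of An} and~\ref{prime power degrees of Schur cover of An} (after discarding $|S|_2=n-1$ exactly as in case~(ii) of Proposition~\ref{proposition_alternating}) one is forced to have $|S|_2=2^{\lfloor(n-2)/2\rfloor}$ or $|S|_2=2^{n/2+\log_2(n-2)-2}$ when $n=2^r+2$, so in particular $|S|_2\ge 2^{\lfloor(n-2)/2\rfloor}$. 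This gives $fN\ge(n-3)/2$; for the classical types $(fl)^2\ge fl^2\ge fN\ge(n-3)/2$, so $fl\ge\sqrt{(n-3)/2}$, and it is this lower bound, fed back into the odd-degree count, that makes Lemma~\ref{technical lemma for PSL and PSU -Theorem A} bite. For the exceptional types $l$ and $N$ are bounded constants, so $fN\ge(n-3)/2$ forces $f$, and hence $fl$, to grow linearly in~$n$; that is the situation handled by parts~(3)--(4) of that lemma, not the $A_l$/$\ta A_l$ case, which uses part~(2). The hypothesis $|S|\mid n!$ plays no role in bounding $n$; it enters only in the subsequent finite verification over the surviving pairs $(S,n)$.
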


\begin{proof}
In light of Proposition~\ref{proposition_alternating} and its proof, it remains to assume that
$n\geq9$ and to prove that $S$ cannot be a simple group of Lie type in even characteristic. Assume
to the contrary that $S=G_l(2^k)$, a simple group of Lie type of rank $l$ defined over a field of
$q=2^k$ elements. As above, we then have $|S|_2\in\cd(\AAA)$. By~Lemmas~\ref{prime power degrees of
An} and~\ref{prime power degrees of Schur cover of An}, we have that $|S|_2=n-1$, $|S|_2=2^{\lfloor
(n-2)/2\rfloor}$, or $|S|_2=2^{n/2+\log_2(n-2)-2}$ when $n=2^r+2$. Since the case $|S|_2=n-1$ can
be eliminated as in the proof of the previous proposition, we can assume further that
$$|S|_2=2^{\lfloor (n-2)/2\rfloor} \text{ or } |S|_2=2^{n/2+\log_2(n-2)-2} \text{ when }
n=2^r+2\:.$$ Recalling the hypothesis that $\cd^\ast(S)\subseteq\cd^\ast(\AAA)$, we have
\begin{equation}\label{euqation}|\Irr_{2'}(S)|\leq|\Irr_{2'}(\AAA)|.\end{equation}

\medskip

(i) $S=B_l(2^k)\cong C_l(2^k)$, $D_l(2^k)$, or $\ta D_l(2^k)$. Then
$|S|_2=2^{kl^2}$ or $2^{kl(l-1)}$. In particular,
$$|S|_2\leq 2^{kl^2}\:.$$ As $|S|_2\geq 2^{\lfloor (n-2)/2\rfloor}$, it
follows that
$$kl^2\geq \lfloor (n-2)/2\rfloor\geq (n-3)/2\:.$$ Therefore
$$kl\geq \sqrt{(n-3)/2}\:.$$
Using Lemma~\ref{number of odd degree characters}, we then obtain
$$|\Irr_{2'}(S)|\geq q^l=2^{kl}\geq 2^{\sqrt{(n-3)/2}}\:.$$ Now
Lemma~\ref{number of odd degree characters of Sn} and
inequality~\eqref{euqation} imply
$$k_1+k_2+\cdots+k_t\geq\sqrt{(n-3)/2},$$ where
$n=2^{k_1}+2^{k_2}+\cdots+2^{k_t}$ is the binary expansion of $n$.
Invoking Lemma~\ref{technical lemma for PSL and PSU -Theorem A}(1),
we obtain that $n<2^{15}$.

\medskip

(ii) $S=A_l(2^k)$ or $\ta A_l(2^k)$. Arguing as above, we have
$$k_1+k_2+\cdots+k_t\geq\sqrt{n-3}-3,$$ which forces $n<2^{13}$ by Lemma~\ref{technical lemma for PSL and PSU -Theorem
A}(2).

\medskip

(iii) $S$ is a simple group of exceptional Lie type. Using
Lemma~\ref{technical lemma for PSL and PSU -Theorem A}(3,4), we
deduce that $n<2^{11}$.

\medskip

For each of the above cases, a computer program has checked that
either $|S|$ does not divide $n!$ or $S$ has an irreducible
character degree not belonging to $\cd(\AAA)$ for ``small'' $n$.
This contradiction completes the proof. Let us describe the example
where $S=\Sp_{2l}(2^k)\cong \Omega_{2l+1}(2^k)$. Then we have
\[kl^2=\lfloor (n-2)/2\rfloor \text{ or } {kl^2=n/2+\log_2(n-2)-2} \text{ when }
n=2^r+2.\] Moreover, the condition $|S|\mid n!$ is equivalent to
\[2^{kl^2}\prod_{i=1}^l (2^{2ki}-1) \mid n!.\] By computer computations, we can determine all triples $(k,l,n)$ with
$n<2^{15}$ satisfying the above conditions. It turns out that, for
each such triple, $n$ is at most $170$ and one of the three smallest
character degrees of $S$ is not a character degree of $\AAA$. The
low-degree characters of simple groups of Lie type can be found
in~\cite{TiepZales,Lub,Ng}.
\end{proof}

We are now ready to prove the first main result.

\begin{proof}[\textbf{Proof of Theorem~\href{theorem_alternating}{\textup{A}}}] Recall the
hypothesis that $G$ is a finite group such that $\CC G\cong
\CC\AAA$. Therefore $\cd^\ast(G)=\cd^*(\AAA)$. In particular, we
have $|G|=|\AAA|$ and $G=G'$ since $\AAA$ has only one linear
character. Let $M$ be a maximal normal subgroup of $G$. Then $G/M$
is a non-abelian simple group, say $S$. It follows that
$\cd^\ast(S)=\cd^\ast(G/M)\subseteq \cd^\ast(G)$ and hence
$$\cd^\ast(S)\subseteq \cd^\ast(\AAA)\:.$$ We also have
\[|S|\mid |G|=|\AAA|=n!.\] Applying Propositions~\ref{proposition_alternating} and~\ref{proposition-alternating2}, we
deduce that $S\cong \Al_n$.

We have shown that $G/M\cong \Al_n$. Since $|G|=|\AAA|=2|\Al_n|$, we
obtain $|M|=2$. In particular, $M$ is central in $G$ and therefore
$M\subseteq \bZ(G)\cap G'$. Thus $G\cong \AAA$, as desired.
\end{proof}


\section{Triple and $6$-fold covers of $\Al_6$ and $\Al_7$}\label{sectionA6andA7}

In this section, we aim to prove that every perfect central cover of
$\Al_6$ or $\Al_7$ is uniquely determined up to isomorphism by the
structure of its complex group algebra. To do that, we need the
following result from~\cite[Lemma~2.5]{Nguyen-TongViet}. Here and in
what follows, we write $\Mult(S)$ and $\Schur(S)$ to denote the
Schur multiplier and the Schur covering group (or the Schur cover
for short), respectively, of a simple group $S$.

\begin{lemma}\label{G is uniquely determined by S and |G|} Let $S$ be a
non-abelian simple group different from an alternating group of
degree greater than $13$. Assume that $S$ is different from
$\PSL_3(4)$ and $\PSU_4(3)$. Let $G$ be a perfect group and $M\lhd
G$ such that $G/M\cong S$, $|M|\leq|\Mult(S)|$, and
$\cd(G)\subseteq\cd(\Schur(S))$. Then $G$ is uniquely determined up
to isomorphism by $S$ and the order of $G$.
\end{lemma}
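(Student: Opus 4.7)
The plan is to identify $G$ with a quotient of $\Schur(S)$ and then argue that, for the classes of $S$ allowed in the hypothesis, such quotients of a given order are unique up to isomorphism.

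First I would reduce to the case where $M\subseteq \bZ(G)$. Since $G$ is perfect and $G/M\cong S$ is simple non-abelian, the image of $\bZ(G)$ in $S$ is central, hence trivial, so $\bZ(G)\subseteq M$. To promote this to $M\subseteq \bZ(G)$, consider $\bar G = G/[G,M]$: the image $\bar M$ of $M$ is central in $\bar G$ and $\bar G/\bar M\cong S$, so $\bar G$ is a perfect central extension of $S$ and hence a quotient of $\Schur(S)$. I would then use the hypothesis $\cd(G)\subseteq\cd(\Schur(S))$ (together with $|M|\leq |\Mult(S)|$) to rule out $[G,M]\neq 1$: any irreducible character of $G$ non-trivial on $[G,M]$ would, by Clifford theory applied to the $G$-action on $\Irr(M)$, have degree forced outside $\cd(\Schur(S))$. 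A clean bookkeeping of this step is best done after reducing to $M$ a minimal normal subgroup of $G$ and treating the elementary-abelian and non-abelian cases separately.

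With $M\subseteq \bZ(G)$, the group $G$ is a perfect central extension of $S$, so there exists a surjection $\Schur(S)\twoheadrightarrow G$ with kernel $K\leq \Mult(S)$. The order $|K|=|\Mult(S)|\cdot|S|/|G|$ is determined by $|G|$ alone, hence so is the isomorphism type of the abelian group $K$, provided subgroups of $\Mult(S)$ of the same order are unique up to $\Aut(\Schur(S))$-equivalence.

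It remains to show that $\Schur(S)/K$ depends, up to isomorphism, only on $|K|$. When $\Mult(S)$ is cyclic this is automatic, and by the ATLAS this covers almost every non-abelian simple group (including the excluded $\Al_n$ with $n\geq 14$, whose multiplier is $\ZZ/2$; these are set aside here so that the paper's larger strategy can treat the alternating family uniformly). For the remaining finite list of simple groups with non-cyclic multiplier I would verify by direct inspection (via the ATLAS or \texttt{GAP}) that any two subgroups of $\Mult(S)$ of the same order are equivalent under $\Aut(\Schur(S))$; this verification fails precisely for $\PSL_3(4)$ (multiplier $(\ZZ/4)^2\times\ZZ/3$) and $\PSU_4(3)$ (multiplier $(\ZZ/3)^2\times\ZZ/4$), which explains the two exceptions. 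The main obstacle is the centrality step: extracting centrality of $M$ from a \emph{set-theoretic} containment of character-degree sets, rather than a multiset equality, is delicate, since one must track which irreducible characters of $G$ are inflated from $G/[G,M]$ versus genuinely new, and argue that any new ones would produce a degree outside $\cd(\Schur(S))$.
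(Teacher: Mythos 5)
The paper does not prove this lemma; it is imported verbatim from \cite[Lemma~2.5]{Nguyen-TongViet}, so there is no in-paper argument to compare against, and I can only assess your proposal on its own terms.

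Your skeleton is right: reduce to $M\le\bZ(G)$ so that $G$ is a quotient of $\Schur(S)$, then argue that subgroups of $\Mult(S)$ of a fixed order are $\Aut(\Schur(S))$-equivalent for the allowed $S$, and you correctly identify $\PSL_3(4)$ and $\PSU_4(3)$ as the obstructions to that last step. The genuine gap is the centrality step, which you yourself flag as ``the main obstacle'' and leave unfinished. You aim to extract $[G,M]=1$ from the set-theoretic containment $\cd(G)\subseteq\cd(\Schur(S))$ via Clifford theory, but you never say \emph{why} a character of $G$ non-trivial on $[G,M]$ would have degree outside $\cd(\Schur(S))$; a priori such a character could have small degree, and nothing in the sketch rules that out.

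In fact the $\cd$ hypothesis is not the natural tool here: the hypothesis $|M|\le|\Mult(S)|$ already suffices. Take $N\le M$ minimal normal in $G$; then $N$ is a direct product of isomorphic simple groups of order at most $|\Mult(S)|$, hence elementary abelian, say of order $p^k$. Since $\bC_G(N)\lhd G$ and $G/M\cong S$ is simple, either $\bC_G(N)M=G$ or $\bC_G(N)\le M$. In the first case $G/\bC_G(N)$ is a perfect group of order dividing $|M|\le|\Mult(S)|$, hence trivial, so $N\le\bZ(G)$. In the second case $S$ would embed into $\Aut(N)=\GL_k(p)$ with $p^k\le|\Mult(S)|$, which fails by an order comparison for every admissible $S$. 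Now pass to $G/N$, apply induction on $|M|$ to get $[M,G]\le N\le\bZ(G)$, and conclude $[M,G]=1$ from the three-subgroups lemma using $G=G'$. You also leave unexplained why alternating groups of degree greater than $13$ are excluded; with $|M|\le|\Mult(A_n)|=2$ the argument above handles them, so that exclusion is most plausibly an artifact of the source's formulation rather than something your route needs to contend with, and guessing at a motive (``so the paper can treat the alternating family uniformly'') does not substitute for checking whether the step actually fails.
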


Now we prove the main result of this section.

\begin{theorem}\label{theorem-A6andA7} Let $G$ be a finite group and $H$ a perfect central cover
of $\Al_6$ or $A_7$. Then $G\cong H$ if and only if $\CC G\cong \CC
H$.
\end{theorem}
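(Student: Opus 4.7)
The plan is to parallel the strategy of Theorem~\href{theorem_alternating}{A} in the small cases $n\in\{6,7\}$. Suppose $\CC G\cong \CC H$, so $|G|=|H|$ and $\cd^\ast(G)=\cd^\ast(H)$. Since every perfect central cover of $\Al_n$ is perfect, $H$ has exactly one linear character, and the same must therefore hold for $G$; thus $G=G'$. Let $M$ be a maximal normal subgroup of $G$ and set $S:=G/M$, a non-abelian simple group satisfying $|S|\mid|H|$ and $\cd^\ast(S)\subseteq \cd^\ast(H)$.

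The crux of the proof is to force $S\cong\Al_n$. Because $|H|$ divides $6\cdot n!$, the prime divisors of $|S|$ lie in $\{2,3,5\}$ when $n=6$ and in $\{2,3,5,7\}$ when $n=7$. The Hall--Huppert--Lempken classification of simple $\pi$-groups, together with the order bound $|S|\leq|H|$, limits the possibilities for $S$ to the short list $\{\Al_5,\Al_6\}$ in the case $n=6$, and to $\{\Al_5,\PSL_2(7),\Al_6,\PSL_2(8),\PSU_3(3),\Al_7\}$ in the case $n=7$. For each candidate other than $\Al_n$, we consult \cite{Atl1} and exhibit a character degree (or a multiplicity thereof) of $S$ that does not match $\cd^\ast(H)$; since there are only four covers of $\Al_6$ and four of $\Al_7$ to handle, this is a short finite check.

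Having established $S\cong\Al_n$, we note that $|M|=|H|/n!$ is a divisor of $|\Mult(\Al_n)|=6$, and that the condition $\cd(G)\subseteq \cd(\Schur(\Al_n))$ holds automatically since $H$ is itself a quotient of $\Schur(\Al_n)$. Lemma~\ref{G is uniquely determined by S and |G|} (which applies because $n\leq 13$ and $\Al_n$ is neither $\PSL_3(4)$ nor $\PSU_4(3)$) then guarantees that $G$ is determined up to isomorphism by $S$ and $|G|$. Since $H$ satisfies the same hypotheses with the same simple quotient and the same order, we conclude $G\cong H$, as required. The converse direction is trivial.

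The principal obstacle is the elimination step in the $n=7$ case: small simple groups such as $\PSL_2(7)$, $\PSL_2(8)$ and $\PSU_3(3)$ share many small character degrees with the various covers of $\Al_7$, so it is not enough to compare \emph{sets} of degrees; we must carefully compare the full \emph{multisets}, using the exact character tables from \cite{Atl1} for each of the six candidate groups against the degree multiset of each of the four covers of $\Al_7$ in turn.
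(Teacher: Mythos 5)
Your overall structure---reduce to a non-abelian simple quotient $S=G/M$, restrict the possibilities for $S$ via $\pi(S)$ and $|S|$, eliminate non-$\Al_n$ candidates by comparing character degrees, and then invoke Lemma~\ref{G is uniquely determined by S and |G|}---is the same as the paper's for $n=7$. The paper organizes the degree comparison slightly differently, checking $\cd^\ast(S)\subseteq\cd^\ast(\Schur(\Al_7))$ once rather than against each of the four multisets $\cd^\ast(H)$ separately, but that is only bookkeeping.

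However, your treatment of $n=6$ has a genuine gap. You plan to eliminate $S=\Al_5$ by exhibiting a degree or multiplicity mismatch with $\cd^\ast(H)$, but for $H=6.\Al_6$ no such mismatch exists: $\cd^\ast(\Al_5)=\{1,3,3,4,5\}$ is in fact a sub-multiset of $\cd^\ast(6.\Al_6)$. Indeed $6.\Al_6$ has the trivial character, at least two faithful characters of degree~$3$ (lifted from $3.\Al_6$, the Valentiner group), at least two of degree~$4$ (lifted from $2.\Al_6\cong\SL_2(9)$), and two of degree~$5$ (from $\Al_6$). So your ``short finite check'' cannot rule out $G/M\cong\Al_5$ for this cover, and a genuinely structural argument---not mere degree bookkeeping---is required, e.g.\ showing that a perfect group of order $2160$ cannot have a normal subgroup of order $36$ with quotient $\Al_5$ and the prescribed degree multiset. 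The paper avoids this issue entirely: since $\Al_6\cong\PSL_2(9)$, every perfect central cover of $\Al_6$ is a quasi-simple classical group, and the $n=6$ case is simply delegated to~\cite[Theorem~1.1]{Nguyen}. The hands-on elimination you describe is carried out in the paper only for the covers of $\Al_7$, where the candidate list is $\{\PSL_2(q): 5\le q\le 23\}\cup\{\PSL_3(3),\PSU_3(3),\Al_7\}$ and the degree comparison does succeed.
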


\begin{proof} First, as $\Al_6\cong \PSL_2(9)$, every perfect central cover of $\Al_6$ can be viewed as
a quasi-simple classical group, which is already studied
in~\cite[Theorem~1.1]{Nguyen}. So it remains to consider the perfect
central covers of $\Al_7$. Let $H$ be one of those and assume that
$G$ is a finite group such that $\CC G\cong\CC H$.

As before, we see that $G$ is perfect and, if $M$ is a normal
maximal subgroup of $G$, we have that $G/M$ is non-abelian simple
and $\cd^\ast(G/M)\subseteq \cd^\ast(H)$. In particular,
\[\cd^\ast(G/M)\subseteq \cd^\ast(\Schur(\Al_7)),\] where
$\Schur(\Al_7)$ is the Schur cover (or the $6$-fold cover) of
$\Al_7$. It follows that $|G/M|\leq 6|\Al_7|=7,560$.
Inspecting~\cite{Atl1}, we come up with
\[G/M\cong \PSL_2(q) \text{ with } 5\leq q \leq 23, \text{ or } \PSL_3(3), \text{ or }\PSU_3(3), \text{ or } \Al_7.\]
Since each possibility for $G/M$ except $\Al_7$ does not satisfy the
inclusion $\cd^\ast(G/M)\subseteq \cd^\ast(\Schur(\Al_7))$, we
deduce that $G/M\cong \Al_7$.

On the other hand, as $\CC G\cong\CC H$, we have $|G|=|H|$. It
follows that $|M|=|\bZ(H)|\leq6$. Using Lemma~\ref{G is uniquely
determined by S and |G|}, we conclude that $G\cong H$.
\end{proof}


\section{Excluding critical character degrees of $\SSS$}\label{section:nondegree}

In this section we prove a non-existence result for special
character degrees of $\SSS$ which will be applied in the next
section. Indeed, with the following proposition we prove a little
more as only the case of even $n$ will be  needed (in fact, the
proof shows that also versions with slightly modified 2-powers can
be obtained).
\begin{proposition}\label{prop:nondegree}
Let $n\in \NN$. If  $2^{[\frac{n-2}2]} (n-1)$ is a character degree
of $\SSS$, then $n\le 8$ and the degree is an ordinary degree
$f_\la$ for $\la\in \{(2),(2,1),(4,2^2)\}$ (or their conjugates), or
the spin degree $g_\mu$ for $\mu=(4,2)$.
\end{proposition}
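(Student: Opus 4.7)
The plan is to set $D_n := 2^{\lfloor(n-2)/2\rfloor}(n-1)$ and, for each $n$ in the small range $n \le 8$, verify by direct inspection of $\cd^\ast(\SSS)$ (using the hook-length formula and its spin analogue) that the only irreducible characters of $\SSS$ of degree $D_n$ are those listed in the statement. For $n\ge 9$ I would then show that $D_n \notin \cd(\SSS)$, handling separately the ordinary (non-faithful) and spin (faithful) characters.

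For spin characters, I would use the factorization $g_\mu = 2^{\lfloor(n-m)/2\rfloor}\bar g_\mu$ where $m = \ell(\mu)$, so that $g_\mu = D_n$ becomes
\[
\bar g_\mu \;=\; 2^{\varepsilon}(n-1), \qquad \varepsilon \;=\; \lfloor(n-2)/2\rfloor - \lfloor(n-m)/2\rfloor \;\ge\; 0,
\]
and analyze via the product formula
\[
\bar g_\mu \;=\; \frac{n!}{\mu_1!\cdots\mu_m!}\prod_{i<j}\frac{\mu_i-\mu_j}{\mu_i+\mu_j}.
\]
For small $m$ this reduces to explicit polynomial Diophantine equations: $m=1$ forces $n-1 = 2^{-\varepsilon}$, which is impossible for $n\ge 5$; $m=2$ with $\mu=(a,b)$ gives $\binom{n}{b}(a-b) = n(n-1)$, whose only solution with $n\ge 5$ is $\mu=(4,2)$, $n=6$; and $m=3$ is dispatched by an analogous polynomial case analysis. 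For $m\ge 4$, I would combine a lower bound on the multinomial coefficient $n!/(\mu_1!\cdots\mu_m!)$ with a lower bound on the product $\prod_{i<j}\frac{\mu_i-\mu_j}{\mu_i+\mu_j}$ (smallest at the staircase-type shape) to show that $\bar g_\mu$ overshoots $2^{\varepsilon}(n-1)$.

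For ordinary characters, suppose $f_\lambda = D_n$ with $\lambda \vdash n$. The key observation is that the $2$-adic valuation $v_2(D_n) = \lfloor(n-2)/2\rfloor + v_2(n-1)$ is close to $v_2(n!) = n - s_2(n)$, so by
\[
v_2(f_\lambda) \;=\; v_2(n!) \;-\; \sum_{(i,j)\in\lambda} v_2(h(i,j))
\]
the total $2$-adic weight of the hook lengths of $\lambda$ must be very small. Combined with the fact that the odd part of $f_\lambda$ is bounded above by $n-1$, this should force $\lambda$ into a small family of near-extremal shapes (hooks, near-hooks, two-row and two-column partitions). I would then combine Rasala's minimal-degree results (Lemma~\ref{minimal degrees of Sn}) with a direct enumeration of this restricted family to verify that $f_\lambda = D_n$ has no solution for $n\ge 9$; the residual small values of $n$ (say $9\le n\le 30$) can be dispatched by direct machine computation.

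The hardest step will be the ordinary-character case: while the spin side reduces to a finite, transparent family of Diophantine equations via the product formula, the ordinary side potentially involves many partitions whose $f_\lambda$ is of comparable magnitude to $D_n$, and ruling out equality rests on the delicate interplay between the $2$-adic hook-length analysis and the smallness of the odd part of $D_n$ relative to the full size of $f_\lambda$.
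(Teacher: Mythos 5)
Your approach is genuinely different from the paper's, but as written it does not close. The paper's proof turns on an arithmetic mechanism from~\cite{Balog-Bessenrodt-Olsson-Ono}: since the odd part of $D_n:=2^{\lfloor(n-2)/2\rfloor}(n-1)$ divides $n-1$, every prime $q$ with $n/2<q\le n$ and $q\ne n-1$ fails to divide $D_n$, and Proposition~\ref{primefch} (resp.\ its bar-length analogue) then forces $q$ to be a first-column hook length of $\lambda$ (resp.\ a first-row bar length of $\mu$); chaining such primes via Theorem~\ref{thm:bound} yields $n-h_1\le 225$ (resp.\ $n-b_1\le 225$), and from this a bound on $(f_\lambda)_2$ gives $n\le 43$ in the ordinary case while a bound on $l(\mu)$ gives $n\le 12$ in the spin case, after which a computer check finishes. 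Your outline never produces a finite cutoff for $n$, so the concluding ``dispatch residual small $n$ by computer'' is not a valid step. You do note that the odd part of $f_\lambda$ is bounded by $n-1$, which is exactly the right starting observation, but you stop short of converting it into the structural constraint that drives the bound.

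There are also concrete local problems. On the spin side, the claimed lower bound for $m\ge4$ is false as stated: for $\mu=(4,3,2,1)$, $n=10$, the product formula gives $\bar g_\mu=12$ while $2^{\varepsilon}(n-1)=18$, so $\bar g_\mu$ does not ``overshoot'' there; moreover $m=l(\mu)$ is only bounded a priori by $O(\sqrt n)$, so a case analysis in $m$ is infinite unless $m$ is first bounded (the paper gets $l(\mu)\le 23$ precisely from $n-b_1\le 225$). On the ordinary side, $v_2(D_n)\approx n/2$ while $v_2(n!)\approx n$, so the total $2$-adic weight of the hook lengths is forced to be about $n/2$ rather than ``very small''; and Lemma~\ref{minimal degrees of Sn} (Rasala) only lists the seven smallest nontrivial degrees of $\Sy_n$, all of size $O(n^4)$, so it is of no use against the exponential-size target $D_n\sim 2^{n/2}\,n$.
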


The strategy for the proof is inspired by the methods used in
\cite{Balog-Bessenrodt-Olsson-Ono,Bessenrodt-Olsson} to classify the
irreducible characters of prime power degrees. A main ingredient is
a number-theoretic result which is a variation of
\cite[Theorem~3.1]{Balog-Bessenrodt-Olsson-Ono}.

\smallskip

First, we define $M(n)$ to be the set of pairs of finite sequences
of integers
$s_1<s_2<\cdots<s_r\le n$, $t_1<t_2<\cdots<t_r\le n$,
with all numbers different from $n-1$,  that satisfy
\begin{itemize}
\item[(i)] $s_i<t_i$ for all $i$;
\item[(ii)] $s_1$ and $t_1$ are primes $>\frac n2$;
\item[(iii)] For $1\le i\le r-1$, $s_{i+1}$ and $t_{i+1}$ contain
prime factors exceeding $2n-s_i-t_i$ and not dividing~$n-1$.
\end{itemize}
We then set
$t(n):=\max\{t_r\mid ((s_i)_{i=1,...,r},(t_i)_{i=1,...,r})\in M(n)\}$,
or $t(n)=0$ when $M(n)=\emptyset$.
Note that for all $n\ge 15$, there are at
least two primes $p,q\neq n-1$ with $\frac n2<p<q\le n$
(e.g., use \cite{Harborth-Kemnitz}); hence for
all $n\ge 15$ the set $M(n)$ is not empty.

\begin{theorem}\label{thm:bound}
Let $n\in \NN$.
Then $n-t(n) \le 225$.

For $15\le n\le 10^9$, we have the tighter bounds
$$n-t(n)\left\{
\begin{array}{cl}
=7 & \text{for } n\in \{30,54\},\\
=5 & \text{for } n\in \{18, 24, 28, 52, 102, 128, 224\},\\
\le 4 & \text{otherwise}.
\end{array}
\right.$$
\end{theorem}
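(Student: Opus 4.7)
The plan is to construct, for each sufficiently large $n$, an explicit chain in $M(n)$ whose terminal entry $t_r$ gets very close to $n$, via a greedy iterative procedure. The base case is to choose $s_1 < t_1$ to be two large primes in $(n/2, n]$ distinct from $n-1$; their existence for $n \ge 15$ follows from Bertrand--Chebyshev (and refinements such as Nagura's), and their closeness to $n$ makes $B_1 := 2n - s_1 - t_1$ small. Inductively, given $(s_i, t_i)$, we select $s_{i+1} < t_{i+1}$ in $(t_i, n]$ (both $\ne n-1$), each possessing a prime divisor exceeding $B_i := 2n - s_i - t_i$ and coprime to $n-1$. Since $\omega(n-1) = O(\log n / \log\log n)$, the list of forbidden primes is short, and admissible $s_{i+1}, t_{i+1}$ can be taken either as primes themselves or, when primes are scarce in the relevant window, as integers of the form $kp$ with $p > B_i$ prime and $p \nmid n-1$.

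The driving quantitative fact is that $B_i$ contracts rapidly along the chain: choosing $s_{i+1}, t_{i+1}$ in the top portion of $(t_i, n]$ makes $B_{i+1}$ a definite fraction of $B_i$, so after $O(\log\log n)$ steps one reaches $B_r = O(1)$. At that point the final step reduces to finding two admissible numbers with large prime factors in a short interval at the top of $[1,n]$, which is handled by a prime-gap bound (Baker--Harman--Pintz, or even Nagura's explicit estimate, for the generous constant we aim for). This produces $n - t(n) \le 225$ for all $n \ge 226$, while for $n \le 225$ the bound is vacuous since $t(n) \ge 0$. The tighter table for $15 \le n \le 10^9$ comes from direct computer enumeration: a polynomial-time greedy/backtracking search over candidate chains in $M(n)$ determines $\max t_r$ for each such $n$, and the isolated exceptional values like $n \in \{30, 54\}$ with $n - t(n) = 7$ correspond to cases where $n-1$ has many small prime divisors that simultaneously block several consecutive candidate steps.

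The main obstacle is guaranteeing that the inductive step never fails when the avoidance constraints (skipping $n-1$ and primes dividing $n-1$) thin out the candidate pool in a short window. This is handled in the spirit of the proof of \cite[Theorem~3.1]{Balog-Bessenrodt-Olsson-Ono}: rather than insisting on primes at every step, one permits integers with a single large prime factor, enlarging the candidate pool enough to absorb the forbidden primes while keeping the contraction of $B_i$ intact. A secondary obstacle is converting the asymptotic density argument into an effective absolute constant; here the strategy is to push the greedy construction as far as elementary prime-gap bounds allow, then close the remaining finite range by the exhaustive computer verification up to $10^9$. Packaging these two ingredients together yields both the uniform bound $n - t(n) \le 225$ and the refined table of exceptions.
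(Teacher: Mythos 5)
Your proposal matches the paper's proof in both structure and substance: the paper likewise handles large $n$ (there, $n>3.9\cdot 10^8$) by adapting the iterative short-interval construction of \cite[Theorem~3.1]{Balog-Bessenrodt-Olsson-Ono}, and covers all $n\le 10^9$ --- including the exceptional values and the range where the analytic argument is not effective --- by a direct computer (Maple) calculation. The only difference is one of emphasis: you build the avoidance of $n-1$ and of the prime divisors of $n-1$ explicitly into the greedy step, whereas the paper simply observes that the numbers produced by the original construction already lie below $n-1$ and that the chosen prime factors can be taken not to divide $n-1$.
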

\begin{proof}
For $n> 3.9\cdot 10^8$, the proof follows
the lines of the arguments  for \cite[Theorem 3.1]{Balog-Bessenrodt-Olsson-Ono},
noticing that in the construction given there
the numbers in the sequences are below $n-1$ and
that the chosen prime  factors do not divide $n-1$;
this then gives $n-t(n)\le 225$.

A computer calculation (with Maple) shows that, for all $n\leq
10^9$, we have the claimed bounds and values for $n-t(n)$.
\end{proof}

For any partition $\lambda$ of $n$, we denote by $l(\la)$ the length
of $\la$, and we let $l_1(\la)$ be the multiplicity of~1 in $\la$.
We put $h_i=h(i,1)$ for $1\le i\le l(\la)$; these are the {\em
first-column hook lengths} of $\la$. We set
$\fch(\la)=\{h_1,\ldots,h_{l(\la)}\}$.

First we want  to show that Proposition~\ref{prop:nondegree}
holds for ordinary characters.
Via computer calculations, the claim is easily checked up to $n=44$,
and in particular, we find the stated exceptions for $n<9$.
Thus we have to show that
$$(*) \qquad f_\lambda = 2^{[\frac{n-2}2]} (n-1) $$
cannot hold  for $n\geq 9$; if necessary, we may even assume that $n>44$.

\medskip

To employ Theorem~\ref{thm:bound}, we need some preparations that
are similar to corresponding results in \cite{Balog-Bessenrodt-Olsson-Ono}.
\begin{proposition}\label{primefch}
If $q$ is a prime with $n-l_1(\la)\le q\le n$ and $q\nmid f_{\la}$, then
$$q,\ 2q,\ \ldots,\ \left[\frac nq\right]q \in \fch(\la)\:.$$
\end{proposition}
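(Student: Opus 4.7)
The plan is to exploit that the hypothesis $q\ge n-l_1(\la)$, equivalently $l_1:=l_1(\la)\ge n-q$, already forces almost all of the multiples of $q$ into $\fch(\la)$ automatically. Indeed the $l_1$ rows of length one in $\la$ produce the first-column hook lengths $1,2,\ldots,l_1$ (since $h_i=1+r-i$ runs through these values on the bottom rows), so
\[\{1,2,\ldots,l_1\}\subseteq\fch(\la),\]
and every multiple $iq\le l_1$ lies in $\fch(\la)$. Since $l_1\ge n-q$ gives $\lfloor l_1/q\rfloor\ge\lfloor n/q\rfloor-1$, the only multiple of $q$ in $\{q,2q,\ldots,\lfloor n/q\rfloor q\}$ that could possibly fail to belong to this forced block is the largest one, $M:=\lfloor n/q\rfloor q$, and only in the case $l_1<M$.

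The harder step is therefore to prove $M\in\fch(\la)$ when $l_1<M$, and I would combine $q\nmid f_\la$ with the rigidity of the $q$-abacus. Starting from the identity
\[v_q(f_\la)=\sum_{k\ge1}\bigl(\lfloor n/q^k\rfloor-\#\{h:q^k\mid h\}\bigr)=0,\]
whose summands are individually non-negative (since the $q^k$-weight of $\la$ is at most $\lfloor n/q^k\rfloor$), we conclude $\#\{h:q^k\mid h\}=\lfloor n/q^k\rfloor$ for every $k\ge1$. In particular $\la$ has exactly $\lfloor n/q\rfloor$ hooks of length divisible by $q$, and the 1-row block already provides $\lfloor l_1/q\rfloor=\lfloor n/q\rfloor-1$ of them, so exactly one further $q$-divisible hook must exist.

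Now assume for contradiction that $M\notin\fch(\la)$. Since first-column hook lengths are bounded by $n$, the beads on runner $0$ of the $q$-abacus can then only occupy the positions $\{q,2q,\ldots,(m-1)q\}$ with $m=\lfloor n/q\rfloor$; consequently the $q$-quotient component on runner $0$ has size $m-1$, while the remaining $q$-quotient components together must have total size exactly one. I would then inspect the very few abacus configurations on the runners $s\ge1$ that produce a quotient piece of size one and are compatible with the beads forced by the 1-row block, and verify in each case that the counts $\#\{h:q^k\mid h\}=\lfloor n/q^k\rfloor$ at some power $k\ge2$, together with the requirement that all $q$-divisible hook lengths lie in $\{q,2q,\ldots,M\}$, cannot all hold simultaneously. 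The resulting contradiction forces $M\in\fch(\la)$.

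The main obstacle is this final matching step. The hypothesis $l_1\ge n-q$ is what makes it tractable: it pins all beads below position $l_1+1$ and confines the remaining ``large-row'' beads to the thin interval $(l_1,n]$ of length at most $q$. This severely limits the admissible abacus configurations, so that only the canonical one---with $M$ itself a bead on runner $0$---is compatible with $q\nmid f_\la$.
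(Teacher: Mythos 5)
Your reduction and the count of $q$-divisible hooks are both correct: the forced first-column hooks $1,\ldots,l_1(\la)$ give $q,2q,\ldots,(w-1)q\in\fch(\la)$ with $w=\lfloor n/q\rfloor$, and $v_q(f_\la)=0$ together with $\#\{h:q^k\mid h\}\le\lfloor n/q^k\rfloor$ for all $k$ forces equality at every $k$, so in particular $\la$ has exactly $w$ hooks of length divisible by $q$. (The paper reaches the same count slightly differently, from the standard bound ``at most $w$'' together with the observation that only $w-1$ such hooks would already give $q\mid f_\la$.) The genuine gap is the acknowledged last step, and the mechanism you propose for it would not produce a contradiction: the equalities $\#\{h:q^k\mid h\}=\lfloor n/q^k\rfloor$ for $k\ge2$ are frequently vacuous (for $n<q^2$ they read $0=0$), and with $q$-quotient $((1^{w-1}),\emptyset,\ldots,(1),\ldots,\emptyset)$ one computes $v_q(f_\la)=v_q(w)$, which vanishes automatically whenever $w<q$; so the higher-power arithmetic never obstructs the configuration you are trying to rule out.

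What actually closes your argument is a positional observation that uses $l_1\ge n-q$ directly rather than the $k\ge2$ counts: if $\la^{(s)}=(1)$ on some runner $s\ge 1$ carrying $m_s$ beads, the lone gap sits at height $m_s-1$, i.e.\ at position $s+(m_s-1)q\notin\fch(\la)$; since $\{1,\ldots,l_1\}\subseteq\fch(\la)$, that position must exceed $l_1\ge n-q$, whence the bead at $s+m_sq$ exceeds $n$, impossible for a first-column hook length. This completes the step and is a genuinely different route from the paper's, which instead places the one extra $q$-divisible hook in the first column via a sum-of-two-hooks bound (\cite[Cor.~2.8]{Balog-Bessenrodt-Olsson-Ono}) when $h_2>l_1$, and handles the hook-shape case $h_2=l_1$ separately. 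As submitted, though, your proposal stops at the crux and points at the wrong source of contradiction.
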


\begin{proof}
Put $w=\big[\frac nq\big]$, $n=wq+r$, $0\le
r<q$. By assumption, we have
$(w-1)q\le(w-1)q+r=n-q\le l_1:=l_1(\la)$.
Thus $q,2q,\ldots,(w-1)q \in \fch(\la)$.
If $wq\le l_1$, then we are done. Assume that $l_1<wq$.
At most $w$ hooks in $\la$ are of
lengths divisible by $q$ (see e.g.\
\cite[Prop.(3.6)]{Ol}).
If there are only the above $w-1$ hooks
in the first column of length divisible by $q$, then $q|f_\lambda$
since $\prod_{i=1}^w(iq)\mid n!$, a contradiction.
Let
$h(i,j)$ be the additional hook length divisible by $q$. Since
$\lambda\ne(1^n)$, $l_1\le h_2$. If $h_2>l_1$, then
$h(i,j)+h(2,1)>q+l_1\ge n$. By \cite[Cor.~2.8]{Balog-Bessenrodt-Olsson-Ono} we get $j=1$. If
$h_2=l_1$, then $\lambda=(n-l_1,1^{l_1})$ and since $l_1<wq$ there
has to be a hook of length divisible by $q$ in the first row. Since
$n-l_1\le q$ we must have $h_1=wq$.
\end{proof}

In analogy to \cite[Cor.~2.10]{Balog-Bessenrodt-Olsson-Ono}, we deduce
\begin{corollary}\label{cor-fch}
Let $1 \le i < j \le l(\la)$. If\, $h\le n$ has a prime
divisor $q$ satisfying $2n-h_i-h_j<q$ and $q\nmid f_{\la}$,
then $h \in \fch(\la)$.
\end{corollary}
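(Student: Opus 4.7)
My approach will follow the strategy behind Proposition~\ref{primefch} and its analog, Corollary~2.10 of~\cite{Balog-Bessenrodt-Olsson-Ono}. The plan is first to show that every hook of $\la$ whose length is divisible by $q$ must lie in the first column, and then to use the hook-length formula together with the hypothesis $q\nmid f_\la$ to pin down which multiples of $q$ occur as first-column hooks; the given integer $h$ will then fall among them.

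For the positional step, I argue by contradiction: suppose $h(a,b)$ is a hook of $\la$ with $q\mid h(a,b)$ and $b\ge 2$. Whenever $k\in\{i,j\}$ and $k\ne a$, Corollary~2.8 of~\cite{Balog-Bessenrodt-Olsson-Ono} forces $b=1$ as soon as $h(a,b)+h_k>n$, so we must have $h(a,b)+h_k\le n$. If $a\notin\{i,j\}$, both inequalities are available and add to $2h(a,b)\le 2n-h_i-h_j<q$, contradicting $h(a,b)\ge q$. If $a=i$ (symmetrically $a=j$), only the inequality with $k=j$ survives, but $h_i\le n$ implies $2n-h_i-h_j\ge n-h_j$, giving $h(a,b)\le n-h_j<q$, a contradiction once more. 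Hence every hook of $\la$ of length divisible by $q$ lies in the first column.

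To finish, set $S:=\{h_k:q\mid h_k\}$; by the preceding step $S$ equals the set of \emph{all} hook lengths of $\la$ divisible by $q$. The hypothesis $q\nmid f_\la$ together with the hook-length formula yields $\sum_{s\in S}\nu_q(s)=\nu_q(n!)$. A direct computation gives $\sum_{k=1}^{\lfloor n/q\rfloor}\nu_q(kq)=\lfloor n/q\rfloor+\nu_q(\lfloor n/q\rfloor!)=\nu_q(n!)$. Since $S\subseteq\{q,2q,\ldots,\lfloor n/q\rfloor q\}$ and the two $\nu_q$-sums coincide, $S$ must equal $\{q,2q,\ldots,\lfloor n/q\rfloor q\}$. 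As $h$ is a positive multiple of $q$ with $h\le n$, we conclude $h\in S\subseteq\fch(\la)$. The main obstacle is the positional case split: unlike Corollary~2.10 of~\cite{Balog-Bessenrodt-Olsson-Ono}, which uses a single first-column index, the weaker numerical hypothesis $q>2n-h_i-h_j$ requires handling the subcases $a=i$ and $a=j$ separately, and it is precisely the presence of two first-column indices $i<j$ that rescues the argument.
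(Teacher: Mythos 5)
Your proof is correct and follows essentially the approach the paper invokes implicitly: the paper only says Corollary~\ref{cor-fch} is deduced ``in analogy to [BBO, Cor.~2.10],'' and your two-step argument (first using [BBO, Cor.~2.8] to push every hook of length divisible by $q$ into the first column, then comparing $q$-adic valuations in the hook-length formula to force $\{q,2q,\ldots,\lfloor n/q\rfloor q\}\subseteq\fch(\la)$) is exactly the expected adaptation of that result, with the prime-power hypothesis replaced by $q\nmid f_\la$. The case split on $a\in\{i,j\}$ versus $a\notin\{i,j\}$ is the right way to exploit the weaker bound $q>2n-h_i-h_j$, and the final valuation count closes it cleanly.
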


We now combine these results with Theorem~\ref{thm:bound},
similarly as in \cite{Balog-Bessenrodt-Olsson-Ono};
as stated earlier we may assume that $n\ge 15$, and hence there
are least two primes $p,q\neq n-1$ with $\frac n2<p<q\le n$.
Assuming $(*)$ for $\la$,
the hook formula implies that
there have to be hooks of length $p$ and $q$
in $\lambda$.
As argued in \cite{Balog-Bessenrodt-Olsson-Ono}, we then
have $p,q\in\fch(\la)$ or $p,q\in\fch(\overline\la)$;
w.l.o.g., we may assume $p,q\in\fch(\la)$.
Then the assumption~$(*)$ forces {\em any}
prime between $\frac n2$ and $n$, except $n-1$ if this is prime,
to be in $\fch(\la)$. This gives an indication towards the connection
with the sequences belonging to the pairs in $M(n)$.
\smallskip

Indeed, we have the following proposition which is
proved similarly as the corresponding result in~\cite{Balog-Bessenrodt-Olsson-Ono}.
\begin{proposition}\label{squeeze}
Let $n\ge 15$.
Let $((s_i)_{i=1..r},(t_i)_{i=1..r})\in M(n)$.
Let $\la$ be a partition of $n$ such that~$(*)$ holds.
Then
$\{s_1,...,s_r,t_1,...,t_r\}\subset \fch(\la)$
or $\{s_1,...,s_r,t_1,...,t_r\}\subset \fch(\overline\la)$ .

In particular,
$n-h_1\le 225$, and we have tighter bounds for
$n-h_1$ when $n\le 10^9$ as given in Theorem~\ref{thm:bound}.
\end{proposition}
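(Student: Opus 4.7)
The plan is induction on $i$, with the claim that at each step the set $\{s_1,\ldots,s_i,t_1,\ldots,t_i\}$ lies entirely inside either $\fch(\lambda)$ or $\fch(\overline{\lambda})$; after swapping $\lambda$ with $\overline{\lambda}$ if necessary, I aim to place it in $\fch(\lambda)$. The structure of the argument parallels the one used in \cite{Balog-Bessenrodt-Olsson-Ono} for prime-power character degrees, the new feature being that $f_\lambda$ here carries the extra odd factor $n-1$ rather than being a pure $2$-power.

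For the base case, note that $s_1, t_1$ are odd primes with $n/2 < s_1 < t_1 \le n$ and $s_1, t_1 \neq n-1$; since any prime exceeding $(n-1)/2$ can divide $n-1$ only if it equals $n-1$, neither $s_1$ nor $t_1$ divides $f_\lambda = 2^{\lfloor (n-2)/2\rfloor}(n-1)$. Because $2s_1 > n$, the hook-length formula forces a unique hook of length $s_1$ in $\lambda$, and similarly for $t_1$. A routine counting argument (using that the complement of an interior $(i,j)$-hook contains the $ij-1$ cells of $[1,i]\times[1,j]\setminus\{(i,j)\}$) shows that any hook of length exceeding $n/2$ in a partition of $n$ must be anchored in the first row or the first column, so $s_1, t_1 \in \fch(\lambda)\cup\fch(\overline{\lambda})$. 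A short case analysis, parallel to the one in \cite{Balog-Bessenrodt-Olsson-Ono}, rules out the mixed configuration, yielding $s_1, t_1 \in \fch(\lambda)$ after possibly replacing $\lambda$ by $\overline{\lambda}$.

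For the inductive step, assume $\{s_1,\ldots,s_i,t_1,\ldots,t_i\}\subset \fch(\lambda)$ and pick indices $a < b$ with $h_a = t_i$ and $h_b = s_i$. By condition~(iii) in the definition of $M(n)$, $s_{i+1}$ admits a prime divisor $q$ with $q > 2n - s_i - t_i = 2n - h_a - h_b$ and $q\nmid n-1$. The constraints $s_i, t_i\neq n-1$ force $s_i + t_i \le 2n-2$, so $q \ge 3$; being odd and coprime to $n-1$, $q$ does not divide $f_\lambda$. Corollary~\ref{cor-fch} then yields $s_{i+1} \in \fch(\lambda)$, and the identical reasoning (with $t_{i+1}$ in place of $s_{i+1}$) completes the step.

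For the final assertion, fix a pair in $M(n)$ with $t_r = t(n)$: the induction places $\{s_1,\ldots,s_r,t_1,\ldots,t_r\}$ in one of $\fch(\lambda), \fch(\overline{\lambda})$, and in either case $t_r \le h_1$ since $h_1(\lambda) = h_1(\overline{\lambda}) = \lambda_1 + \lambda'_1 - 1$. Thus $t(n) \le h_1$, whence $n - h_1 \le n - t(n)$, and Theorem~\ref{thm:bound} delivers the stated bounds. The principal obstacle is the base case: excluding the mixed configuration where $s_1 = h(k,1)$ and $t_1 = h(1,l)$ with $k,l \ge 2$ requires combining the rigid identity $s_1 + t_1 = n+1$ (forced by both being $> n/2$ and by the union of the two hooks fitting inside $\lambda$) with the extra factor $n-1$ in $f_\lambda$, and this is where the present argument diverges slightly from that of \cite{Balog-Bessenrodt-Olsson-Ono}.
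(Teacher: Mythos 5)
Your proposal is correct and follows essentially the same route as the paper: the paper likewise establishes the base case via the two primes $>\frac n2$ not dividing $f_\la$ (using Proposition~\ref{primefch} and the BBOO argument to exclude the mixed row/column configuration), runs the induction through Corollary~\ref{cor-fch} using condition~(iii) of $M(n)$ to guarantee an odd prime divisor not dividing $n-1$ (hence not dividing $f_\la=2^{[\frac{n-2}2]}(n-1)$), and concludes $n-h_1\le n-t(n)$ from Theorem~\ref{thm:bound}. The only cosmetic quibble is that your parenthetical justification that interior hooks have length $\le n-(ij-1)$ gives only $h(i,j)\le n-3$; the sharper standard bound $h(2,2)\le (n-3)/2$ is what actually forces hooks of length $>\frac n2$ into the first row or column, but this is exactly the fact used in \cite{Balog-Bessenrodt-Olsson-Ono} and does not affect the argument.
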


Now we can embark on the {\bf first part of the proof of
Proposition~\ref{prop:nondegree}}, showing the non-existence of
ordinary irreducible characters of the critical degree for $n\ge 9$.
As remarked before, we may assume $n\ge 44$.

Set $m=[\frac{n-2}2]$,
and assume that the partition $\la$ of $n$ satisfies
$$(*) \qquad f_{\la}=2^m(n-1)\:.$$
Let $c=n-h_1$.
By \cite[Prop.~4.1]{Balog-Bessenrodt-Olsson-Ono} we have the following bound for the 2-part of
the degree:
$$(f_\la)_2 \leq n^2 \cdot ((2c+2)!)_2\:.$$
By Proposition~\ref{squeeze}, we have $c\le 225$, and hence
$((2c+2)!)_2 \le (452!)_2=2^{448}$.
Thus
$$2^m\le 2^{448} n^2 \leq 2^{448} (2m+3)^2 \:. $$
A short computation gives $m\le 467$, and hence $n\le 937$.
By Proposition~\ref{squeeze},
$c\le 5$, unless $n=54$, where we only get $c\le 7$.
But for $n=54$ we can argue as follows.
As $\la$ satisfies $(*)$,
w.l.o.g.\ $43,47 \in \fch(\la)$.
Then $l_1(\la) \ge 35$ (by \cite[Prop.~2.6]{Balog-Bessenrodt-Olsson-Ono}), and
hence $17,34\in \fch(\la)$; since $54>3\cdot 17$,
by the hook formula
there has to be one more hook of length divisible by~17 in~$\la$.
As $c\le 7$, this is in the first row or column;
if it is not in the first column, we get a contradiction
considering this hook and the one of length~43.
Thus $51\in \fch(\la)$, and hence $c\le 3$.

Hence for all $n\le 937$ we have
$((2c+2)!)_2 \le (12!)_2=2^{10}$, and
$$2^m\le 2^{10} n^2 \leq 2^{10} (2m+3)^2 \:. $$
This implies $m\le 20$, and hence $n\le 43$,
where the assertion was checked directly.
\qed

\bigskip

Next we deal with the spin characters of $\SSS$. Recall that for a
bar partition $\la$ of~$n$, $\bar g_{\la}$ is the number of shifted
standard tableaux of shape $\la$, and the spin character degree
associated to $\la$ is $g_\la=2^{[\frac{n-l(\la)}2]}\bar g_{\la}$.
Hence the condition on the spin degree translates into the condition
$(\dagger)$ on $\bar g_\la$ given below.
\begin{proposition}\label{prop:gnonexist}
Let $\la$ be a bar partition of~$n$. Then
$$(\dagger )\qquad  \bar g_\lambda = \left\{
\begin{array}{cl}
2^{\big\lceil\frac{l(\la)-2}2\big\rceil}(n-1) & \text{for $n$ even}\\
2^{\big\lceil\frac{l(\la)-3}2\big\rceil}(n-1) & \text{for $n$  odd}
\end{array}
\right.$$ only if $n\le 6$, and $\la=(2)$, $\la=(4,2)$.
\end{proposition}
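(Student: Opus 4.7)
The plan is to adapt the argument just used for ordinary characters to bar partitions, using the spin-character toolkit developed in~\cite{Bessenrodt-Olsson}. Rewriting $(\dagger)$ shows that the odd part of $\bar g_\la$ must equal $n-1$, while its 2-part is bounded by $\lceil(l(\la)-2)/2\rceil$ (resp.\ $\lceil(l(\la)-3)/2\rceil$) when $n$ is even (resp.\ odd). A direct computer verification for all bar partitions of small~$n$ (say $n\le 50$) turns up only the exceptions $\la=(2)$ and $\la=(4,2)$, so we may henceforth assume $n$ large.

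For $n\ge 15$ there exist primes $p<q$ with $n/2 < p < q \le n$ and both different from~$n-1$. For any such odd prime~$p$ we have $v_p(n!)=1$; since the odd part of $\bar g_\la = n!/\prod b(i,j)$ is forced to be $n-1$, which is coprime to $p$, it follows that $p$ must occur as a bar length in the shift-symmetric diagram of $\la$. Invoking the bar-partition analogues of Proposition~\ref{primefch} and Corollary~\ref{cor-fch} (the spin versions set up in~\cite{Bessenrodt-Olsson}), which push the multiples of such primes into the first-row bar lengths of $\la$, and then the bar analogue of Proposition~\ref{squeeze} applied with pairs in $M(n)$ together with Theorem~\ref{thm:bound}, we obtain the sharp bound $c:=n-\la_1\le 225$ in general, tightened to $c\le 7$ for all $n\le 10^9$.

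Once $\la_1$ is within $c$ of $n$, the remaining parts $\la_2>\la_3>\cdots$ form a bar partition of $c$, so $l(\la)$ is bounded by a small constant and the target 2-exponent $\lceil(l(\la)-2)/2\rceil$ is bounded as well. A bar-partition analogue of~\cite[Prop.~4.1]{Balog-Bessenrodt-Olsson-Ono} then gives an upper bound on $(\bar g_\la)_2$ of the shape (polynomial in $n$)$\cdot ((2c+2)!)_2$, and comparing this to the tiny 2-exponent demanded by~$(\dagger)$ drives $n$ back into the range already handled by computer, yielding no new solutions. The principal obstacle is the faithful transfer of the ordinary-partition squeeze tools (Proposition~\ref{primefch}, Corollary~\ref{cor-fch}, Proposition~\ref{squeeze}, and the 2-part estimate) to the bar-partition setting; once the bar versions are in place the numerical finish is entirely parallel to the first half of the proof of Proposition~\ref{prop:nondegree}.
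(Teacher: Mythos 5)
Your reduction to large $n$ and the squeeze step are in the spirit of the paper's argument (which indeed transfers Proposition~\ref{primefch}, Corollary~\ref{cor-fch} and Proposition~\ref{squeeze} to bar lengths via the variants of \cite[Prop.~2.5, Lemma~2.6]{Bessenrodt-Olsson}), though note one inaccuracy: what the squeeze controls is $n-b_1$ where $b_1=\la_1+\la_2$ is the \emph{largest first-row bar length}, not $n-\la_1$. This is harmless for your purpose, since $\la_3+\cdots+\la_m=n-b_1\le 225$ with distinct parts still bounds $l(\la)$ (the paper gets $l(\la)\le 23$).

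The genuine gap is in your final step. In the ordinary case the 2-adic comparison works because $(*)$ \emph{demands} a 2-exponent $\lfloor\frac{n-2}{2}\rfloor$ that grows linearly in $n$, while the upper bound $n^2((2c+2)!)_2$ on $(f_\la)_2$ is essentially constant once $c$ is bounded. In the spin case the situation is reversed: once $l(\la)$ is bounded, the exponent $r=\lceil\frac{l(\la)-2}{2}\rceil$ demanded by $(\dagger)$ is \emph{tiny} (at most $11$), so an \emph{upper} bound on $(\bar g_\la)_2$ can never contradict it — the comparison you propose is vacuous and does not drive $n$ down. What is needed instead is a \emph{lower bound on the size of $\bar g_\la$ itself}: by \cite[Prop.~2.2]{Bessenrodt-Olsson}, $\bar g_\la\ge \frac12(n-1)(n-4)$ except for $\la=(n)$ and $\la=(n-1,1)$, whereas $(\dagger)$ together with $l(\la)\le 23$ forces $\bar g_\la=2^r(n-1)\le 2^{11}(n-1)$; comparing the quadratic lower bound with this near-linear upper bound gives $n-4\le 2^{12}$, and then the tighter bound $c\le 7$ for such $n$ gives $l(\la)\le 6$, $r\le 2$, hence $\frac12(n-1)(n-4)\le 4(n-1)$ and $n\le 12$, which is already covered by the computation. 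Without this size estimate (or an equivalent lower bound), your argument does not close.
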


We note that, for $n\le 34$, the assertion is easily checked by
computer calculation (using John Stembridge's Maple package QF), so
we may assume that $n>34$ when needed.

\medskip

We put $b_i=b(1,i)$ for the {\em first row bar lengths} of~$\la$,
and $\frb(\la)$ for the set of first row bar lengths of~$\la$
(see \cite{Ol} for details on the combinatorics of bars).

In analogy to the case of ordinary characters where we have modified
the results in \cite{Balog-Bessenrodt-Olsson-Ono}, we adapt the
results in \cite{Bessenrodt-Olsson} for the case under consideration
now. Similarly to Proposition~\ref{primefch} before, we have a
version of \cite[Prop.~2.5]{Bessenrodt-Olsson} where, instead of the
prime power condition for $\bar g_\la$, the condition $q\nmid \bar
g_{\la}$ is assumed for the prime $q$ under consideration. For the
corresponding variant of \cite[Lemma~2.6]{Bessenrodt-Olsson} that
says that any prime $q$ with $\frac n2 < q \le n$ and $q \nmid \bar
g_\la$ is a first row bar length of~$\la$, we need two primes
$p_1,p_2 \ne n-1$ with $p_1+p_2-n > \frac n2$. For $n\geq 33$, $n\ne
42$, we always find two primes $p_1,p_2\ne n-1$ such that  $\frac34
n<p_1<p_2\le n$. But for $n=42$, the primes $p_1=31$ and $p_2=37$
are big enough to have $p_1+p_2-n > \frac n2$.
\smallskip

The largest bar length of $\la$ is $b_1=b(1,1)=\la_1+\la_2$.
As before, the preparatory results just described together
with our arithmetical Theorem~\ref{thm:bound}
show that $n-b_1$ is small for a bar partition
$\lambda$ satisfying $(\dagger)$.
More precisely, we obtain:
\begin{proposition}\label{barsqueeze}
Let $n\ge 15$.
Let $((s_1,\ldots,s_r),(t_1,\ldots,t_r))\in M(n)$.
Assume that $\la$ is a bar partition of $n$ that
satisfies~$(\dagger )$.
Then
$s_1,\ldots,s_r,t_1,\ldots,t_r \in \frb(\la)$.

In particular, if $\la$  satisfies~$(\dagger )$, then
$n-b_1\le 225$, and we have tighter bounds for
$n-b_1$ when $n\le 10^9$ as given in Theorem~\ref{thm:bound}.
\end{proposition}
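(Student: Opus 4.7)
The plan is to mirror the proof of Proposition~\ref{squeeze}, using the bar analogues of the preparatory results just outlined in place of Proposition~\ref{primefch} and Corollary~\ref{cor-fch}. Specifically, I would prove by induction on $i$ that each of $s_i$ and $t_i$ lies in $\frb(\la)$; since bar partitions are already strict, no separate ``conjugate'' case is needed as in the ordinary character version, so the conclusion follows directly.

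For the base case $i=1$, condition (ii) together with the definition of $M(n)$ gives that $s_1$ and $t_1$ are primes exceeding $n/2$ and different from $n-1$. Under hypothesis $(\dagger)$, $\bar g_\la$ equals a power of~$2$ times $(n-1)$, so neither $s_1$ nor $t_1$ divides $\bar g_\la$. The variant of \cite[Lemma~2.6]{Bessenrodt-Olsson} just discussed then forces $s_1, t_1 \in \frb(\la)$. For the inductive step, suppose $s_i, t_i \in \frb(\la)$. By condition (iii), each of $s_{i+1}$ and $t_{i+1}$ admits a prime divisor $q > 2n - s_i - t_i$ with $q \nmid n-1$, and hence $q \nmid \bar g_\la$ by $(\dagger)$. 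A bar-partition analogue of Corollary~\ref{cor-fch}, derived from the extended form of \cite[Prop.~2.5]{Bessenrodt-Olsson}, then places $s_{i+1}$ and $t_{i+1}$ in $\frb(\la)$. This completes the induction; in particular $t_r \in \frb(\la)$, so $b_1 \geq t_r$, and Theorem~\ref{thm:bound} yields $n - b_1 \leq n - t(n) \leq 225$, with the tighter bounds when $n \leq 10^9$.

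The main technical obstacle is formulating the bar-partition analogue of Corollary~\ref{cor-fch}: one must show that if $h \leq n$ admits a prime divisor $q$ with $q > 2n - b_i - b_j$ and $q \nmid \bar g_\la$, then $h \in \frb(\la)$. In the ordinary case this rests on a counting argument bounding the number of hooks divisible by $q$ together with the hook-sum constraint; in the bar setting one uses the analogous fact from \cite{Ol} that the number of bars of length divisible by an odd prime $q$ is at most $\lfloor n/q \rfloor$, combined with the bar-length sum constraint controlled by $b_i + b_j$. Once this tool is in place the induction runs essentially verbatim, and Theorem~\ref{thm:bound} delivers the numerical conclusion.
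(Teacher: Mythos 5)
Your proposal is correct and matches the paper's approach; the paper itself gives no explicit proof of Proposition~\ref{barsqueeze}, merely asserting that the preparatory variants of \cite[Prop.~2.5]{Bessenrodt-Olsson} and \cite[Lemma~2.6]{Bessenrodt-Olsson} combined with Theorem~\ref{thm:bound} yield the conclusion, and your inductive argument (base case via the variant of Lemma~2.6, inductive step via a bar-analogue of Corollary~\ref{cor-fch}, and the observation that the conjugate-partition dichotomy disappears for bar partitions) is exactly the intended filling-in of that sketch.
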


Now we can get into the {\bf second part of the proof of
Proposition~\ref{prop:nondegree}}, showing the non-existence of spin
irreducible characters of the critical degree for $n\ge 7$. We have
already seen that it suffices to prove
Proposition~\ref{prop:gnonexist}, and that we may assume $n\ge 15$.
Set
$$r=\left\{
\begin{array}{cl}
\big\lceil\frac{l(\la)-2}2\big\rceil&  \text{for $n$ even}
\\
\-
\big\lceil\frac{l(\la)-3}2\big\rceil & \text{for $n$ odd}
\end{array}
\right.
$$
and assume that
$\la$ is a bar partition of $n$ that satisfies $(\dagger)$.

Let $c=n-b_1$.
As seen above, we have $c\le 225$, and hence
$l(\la)\le 23$.
Thus $r\le 11$ in any case,
and hence $\bar g_\la \le 2^{11}(n-1)$.

Now, by \cite[Prop. 2.2]{Bessenrodt-Olsson} we know that $\bar g_\la
\geq \frac12 (n-1)(n-4)$ unless we have one of the following
situations: $\la=(n)$and $\bar g_\la=1$, or $\la=(n-1,1)$ and $\bar
g_\la=n-2$. None of these exceptional cases is relevant here, and
thus we obtain $n-4\le 2^{12}$. But for $n\le 4100$ we already know
that $c\le 7$. Then $l(\la)\le 6$ and $r\le 2$, and hence
$$\frac12 (n-1)(n-4) \le \bar g_{\la} \le 4(n-1)\:.$$
But then $n-4\le 8$, a contradiction.
Thus we have now completed the proof of Proposition~\ref{prop:nondegree}.
\qed


\section{Eliminating simple groups of Lie type in even characteristic}\label{section eliminating even charactristic}

Let $G$ be a finite group whose complex group algebra is isomorphic
to that of $\SSS$. In order to show that $G$ is isomorphic to one of
the two double covers of $\Sy_n$, one has to eliminate the
involvement of all non-abelian simple groups other than $\Al_n$ in
the structure of $G$. The most difficult case turns out to be the
simple groups of Lie type in even characteristic.

For the purpose of the next lemma, let $\mathcal{C}$ be the set
consisting of the following simple groups: \[\{
{}^2F_4(2)',\PSL_4(2),\PSL_3(4),\PSU_4(2),\PSU_6(2),\]
\[\mathrm{P}\Omega_8^+(2),\PSp_6(2),{}^2B_2(8),G_2(4),{}^2E_6(2)\}.\]

\begin{lemma}\label{minproj} If $S$ is a simple group of Lie type in characteristic
$2$ such that $|S|_2\geq 2^4$ and $S\not\in\mathcal{C}$, then
$|S|_2<2^{(e(S)-1)/2}$, where $e(S)$ is the smallest non-trivial
degree of an irreducible projective representation of $S$.
\end{lemma}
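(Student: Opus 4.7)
The plan is to recast the inequality in terms of the standard Lie-theoretic invariants and then verify it family by family, using the known Landazuri--Seitz--Zalesski lower bounds on $e(S)$. Writing $q=2^k$ for the field of definition and $N$ for the number of positive roots of the underlying root system, the Steinberg character has degree $|S|_2=q^N=2^{kN}$, so the target inequality
$$|S|_2 < 2^{(e(S)-1)/2}$$
is equivalent to
$$e(S) > 2kN+1,$$
and this is the form I would verify for each $S\notin\mathcal{C}$ with $|S|_2\geq 2^4$, i.e.\ $kN\geq 4$.

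For each family of simple groups of Lie type in characteristic~$2$, the Landazuri--Seitz--Zalesski bound (as tabulated e.g.\ in~\cite{TiepZales,Lub,Ng}) gives a lower estimate for $e(S)$ which is polynomial in $q$ of degree essentially the untwisted Lie rank~$l$, whereas $2kN+1$ grows only as $kl^2$ for the classical types (and linearly in $k$ for exceptional types of fixed rank). Explicit instances are $e(\PSL_{l+1}(q))\geq (q^{l+1}-q)/(q-1)-1$; for $\PSU_{l+1}(q)$ an analogous bound; for $\PSp_{2l}(q)$ and $\Omega^\pm_{2l}(q)$ with $q$ even and $l\geq 2$, minimal degrees of order $q^{2l-2}$ or larger; and for the exceptional types $G_2,{}^3D_4,F_4,E_6,{}^2E_6,E_7,E_8,{}^2B_2,{}^2F_4$, corresponding polynomial-in-$q$ lower bounds. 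Substituting these into $e(S) > 2kN+1$ shows the inequality holds with room to spare whenever $(l,q)$ is not very small, since the left-hand side is exponential in both $l$ and $k$ while the right-hand side is polynomial.

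It remains to handle the finite small-parameter range. I would enumerate the pairs (type of $S$, $q$) with $kN\geq 4$ but small rank and small $k$, that is, the range where either the generic Landazuri--Seitz estimate is weak or where the actual minimal projective degree is pulled down by an exceptional Schur multiplier or exceptional isomorphism (so that small-dimensional faithful projective representations of the covering group exist). For each such case I would read $e(S)$ directly from the ATLAS~\cite{Atl1} and from L\"ubeck's tables~\cite{Lub}, and check that either $e(S)>2kN+1$ or $S\in\mathcal{C}$. The main obstacle will be precisely this exhaustive small-parameter check: one must enumerate all remaining possibilities and be certain that $\mathcal{C}$ captures exactly the genuine exceptions. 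In practice this is organised as a finite case table, in parallel with the data tables used elsewhere in the paper.
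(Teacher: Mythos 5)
Your proposal is correct and follows essentially the same route as the paper: both rewrite the inequality as $e(S)>2N(S)f+1$ with $|S|_2=q^{N(S)}$, $q=2^f$, and then verify it family by family using the tabulated minimal projective degrees (Tiep--Zalesskii, L\"ubeck), with the small-parameter exceptions absorbed into $\mathcal{C}$. The only difference is cosmetic: the paper writes out the $\PSL$ and ${}^2B_2$ cases in full and asserts the others are similar, while you sketch the asymptotics across all families and defer the finite check to a case table.
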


\begin{proof} Assume that $S$ is defined over a finite field of size $q=2^f$.  If $|S|_2=q^{N(S)}$, then the inequality in the lemma is equivalent to
\begin{equation}\label{eq} e(S)>2N(S)f+1.
\end{equation}

The values of $e(S)$ are available in \cite[Table~II]{TiepZales} for
classical groups and in~\cite{Lub} for exceptional groups. The
arguments for simple classical groups are quite similar. So let us
consider the linear groups. Assume that $S=\PSL_m(q)$ with $m\geq
2$.  We have $N(S)=m(m-1)/2$.  First we assume that $m=2$.  As
$|S|_2=q\geq 2^4$, we deduce that $e(S)=q-1$ and $f\geq 4$.  In this
case, we obtain $e(S)-1=2^f-2$ and $2N(S)f+1=2f+1$.  As $f\geq 4$, we
see that (\ref{eq}) holds. Next we assume that $m\geq 3$ and $S\neq
\PSL_3(4),\PSL_4(2)$.  As $|S|_2\geq 2^4$, we deduce that $S\neq
\PSL_3(2)$.  We have $e(S)={(q^m-q)}/({q-1})$.  Now (\ref{eq}) is
equivalent to
\[\frac{q^m-q}{q-1}>m(m-1)f+1.\] It is routine to check that this
inequality holds for any $m\geq 3$ and $q\geq 2$.

The arguments for exceptional Lie type groups are also similar. For
instance, if $S={}^2B_2(2^{2m+1})$ with $m\geq 1$ then
$|S|_2=2^{2(2m+1)}$ and $e(S)=2^m(2^{2m+1}-1)$.  The inequality can
now be easily checked.
\end{proof}


\begin{proposition}\label{symmetric2bis} Let $G$ be a finite group and $S$ a simple group
of Lie type in characteristic $2$. Suppose that $M\subseteq G'$ is a
normal subgroup of $G$ such that $G'/M\cong S$ and $|G:G'|=2$. Then
$\cd^\ast(G)\neq\cd^\ast(\SSS)$ for every integer $n\geq 10$.
\end{proposition}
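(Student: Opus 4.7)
The plan is to argue by contradiction: assuming $\cd^*(G)=\cd^*(\SSS)$, I would exhibit a character of $G$ of degree $2^{\lfloor(n-2)/2\rfloor}(n-1)$, in contradiction with Proposition~\ref{prop:nondegree} since $n\ge 10$.

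First I would collect structural data. From $\cd^*(G)=\cd^*(\SSS)$ one has $|G|=2\cdot n!$, whence $|G'|=n!$ and $|S|$ divides $n!$. Since $G'/M\cong S$ and $[G:G']=2$, each irreducible character of $S$ inflates to an irreducible character of $G'$ which either extends to two characters of $G$ of the same degree or induces to one character of $G$ of double degree; in either case, for every $d\in\cd(S)$ either $d\in\cd(G)$ or $2d\in\cd(G)$. Applying this to the Steinberg character of $S$ forces the 2-power $|S|_2$ (or $2|S|_2$) to lie in $\cd(\SSS)$, and Lemmas~\ref{prime power degrees of Sn} and~\ref{prime power degrees of Schur cover of Sn} confine $|S|_2$ for $n\ge 10$ to a short list: $n-1$ (when $n=2^r+1$), the basic spin degree $2^{\lfloor(n-1)/2\rfloor}$, or $2^{2^{r-1}+r}$ (when $n=2^r+2$), possibly after a factor of $2$.

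Next I would apply Lemma~\ref{minproj}: for $S\notin\mathcal{C}$ with $|S|_2\ge 16$ we have $e(S)>2\log_2|S|_2+1$, and combined with the bounds on $|S|_2$ this forces $e(S)$ to be at least roughly $n$. On the other hand $e(S)$ or $2e(S)$ lies in $\cd(\SSS)$, and by Lemma~\ref{minimal degrees of Schur cover of An} the only nontrivial character degree of $\SSS$ strictly below $d_2(\SSS)=\min\{n(n-3)/2,\,2^{\lfloor(n-1)/2\rfloor}\}$ is $n-1$. These constraints, together with $|S|\mid n!$, pin $S$ down to very low-rank simple groups of Lie type with $|S|_2$ essentially equal to $2^{\lfloor(n-1)/2\rfloor}$; the finitely many exceptional groups $S\in\mathcal{C}$ and the small cases $|S|_2<16$ are handled by direct inspection of character tables using the same divisibility.

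In each surviving case I would then produce the forbidden degree. If $M=1$ the construction uses the explicit low-degree characters of $S$ together with the extension/induction mechanism; if $M\ne 1$, one takes a $G'$-stable character $\theta\in\Irr(M)$ that extends to $G'$, and for $\chi'\in\Irr(S)$ the Clifford-theoretic product $\tilde\theta\cdot\chi'$ gives a character of $G'$ of degree $\theta(1)\chi'(1)$. A judicious choice of $\theta$ with $\theta(1)$ an appropriate 2-power and $\chi'$ of degree related to $n-1$ yields, after extension or induction to $G$, the forbidden degree $2^{\lfloor(n-2)/2\rfloor}(n-1)$; the parity of $n$ and the edge cases $n=2^r+1$ and $n=2^r+2$ must be treated separately. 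The main obstacle is this final construction: guaranteeing the existence of characters $\theta$ and $\chi'$ with compatible 2-parts and the right $G$-invariance, since in some configurations $M$ carries no character of the required degree and the argument must be completed purely through the character theory of $S$ together with a careful analysis of the tensor products of low-degree characters with the Steinberg.
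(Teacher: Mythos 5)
Your high-level strategy is correct — argue by contradiction, use the Steinberg character to force a $2$-power into $\cd(\SSS)$, invoke Lemma~\ref{minproj}, and ultimately produce the forbidden degree $2^{\lfloor(n-2)/2\rfloor}(n-1)$ of Proposition~\ref{prop:nondegree}. But there are several genuine gaps between your sketch and a working argument, and they are essentially where the paper's proof does its real work.

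First, you never exploit \emph{multiplicity}. The paper observes that $\St_S$ extends to $G/M$, so by Gallagher $G/M$ has \emph{two} irreducible characters of degree $|S|_2$. After showing $|S|_2>n-1$, this multiplicity is what kills the case $n$ odd (the basic spin degree $2^{(n-1)/2}$ of $\SSS$ has multiplicity one, by Wales) and, for $n=2k$ even, forces $|S|_2=2^{k-1}$ exactly rather than any of the other $2$-power candidates. Your proposal lists the candidate $2$-powers but gives no mechanism to select among them or to eliminate $n$ odd; without the multiplicity input this step does not close.

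Second, your application of Lemma~\ref{minproj} runs in the wrong direction. You try to argue that $e(S)$ must be a small degree of $\SSS$ (``at least roughly $n$'' and then squeeze it below $d_2(\SSS)$), which is not a coherent constraint: $e(S)$ need not literally be a character degree of $G/M$, and $\SSS$ has plenty of large degrees, so a lower bound on $e(S)$ by itself forces nothing. What actually works is an \emph{upper} bound on $e(S)$: the paper takes $\psi\in\Irr(G)$ with $\psi(1)=n-1$ (this degree is certainly in $\cd(\SSS)$), restricts to $G'$ (still irreducible since $n-1$ is odd and $|G:G'|=2$), and applies Clifford theory to $\phi\downarrow_M=e(\theta_1+\cdots+\theta_t)$. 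If $et>1$ one gets a nontrivial projective degree of $S$ dividing $n-1$, hence $e(S)\le n-1$, i.e.\ $k-1\ge (e(S)-1)/2$, which against $|S|_2=2^{k-1}$ and Lemma~\ref{minproj} forces $S\in\mathcal{C}$ (finitely many cases, checked directly).

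Third, your proposed construction of the forbidden degree is off target. You want to choose $\theta\in\Irr(M)$ with $\theta(1)$ a $2$-power and pair it with a character of $S$ related to $n-1$ — but there is no reason $M$ should carry such a $\theta$, and you yourself flag this as ``the main obstacle.'' The paper sidesteps this entirely: in the remaining branch $et=1$, the constituent $\theta$ of $\psi\downarrow_M$ (whose degree divides $n-1$ and is odd, not a $2$-power) extends all the way to $\psi\in\Irr(G)$, and then Gallagher with $\tau$ an extension of $\St_S$ to $G/M$ produces $\psi(1)\cdot|S|_2=(n-1)\,2^{k-1}\in\cd(G)=\cd(\SSS)$, contradicting Proposition~\ref{prop:nondegree}. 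No separate case split for $M=1$ versus $M\neq1$ is needed. In short: anchor on the known degree $n-1$ of $G$ and multiply by the Steinberg degree via Gallagher; do not try to manufacture a $2$-power on $M$.
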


\begin{proof} By way of contradiction, assume that
$\cd^\ast(G)=\cd^\ast(\SSS)$ for some $n\geq 10$.  Let $\St_S$ be the
Steinberg character of $G'/M\cong S$.  As $\St_S$ extends to $G/M$
and $|G/M:G'/M|=2$, by Gallagher's Theorem
(see~\cite[Corollary~6.17]{Isaacs} for instance), $G/M$ has two
irreducible characters of degree $\St_S(1)=|S|_2$.  As $n\geq 10$, Lemma~\ref{minimal degrees of Schur cover of An}(1) yields that
$d_1(\SSS)=n-1$.

We claim that $|S|_2>d_1(\SSS)=n-1$.  Suppose for a contradiction
that $|S|_2=d_1(\SSS)$.  If $G/M\cong S\times C_2$, then
$\cd(G/M)=\cd(S)\subseteq \cd(\SSS)$, which implies that $d_1(S)\geq
d_1(\SSS)=|S|_2$.  However, this is impossible since $S$ always has
a non-trivial character degree smaller than $|S|_2$
(see~\cite[Lemma~8]{Tong-Viet3} for instance). Now assume that $G/M$
is almost simple with socle $S$.  If $S\not\cong\PSL_2(q)$ with
$q\geq 4$, then $d_1(G/M)<|S|_2=d_1(\SSS)$ by
\cite[Lemma~2.4]{Tong-Viet2}, which leads to a contradiction as
before since $\cd(G/N)\subseteq \cd(\SSS)$.  Therefore, assume that
$S=\PSL_2(q)$ with $q=2^f\geq 4$.  Then $q=|S|_2=n-1\geq 9$.  If
$q\equiv -1$ (mod $3$), then $d_1(G)=q-1<q=d_1(\SSS)$ by
\cite[Lemma~2.5]{Tong-Viet2}, which is impossible. Hence, $q\equiv
1$ (mod $3$) and $q+1\in\cd(G/M)\subseteq \cd(\SSS)$.  It follows
that $q+1=(n-1)+1=n\in\cd(\SSS)$.  If $n\geq 12$, then
$d_2(\SSS)>2n>n>d_1(\SSS)$, hence $n$ is not a degree of $\SSS$.
Thus $10\leq n\leq 11$.  However, we see that $n-1$ is not a power
of $2$ in either case. The claim is proved.

Assume that $n=2k+1\geq 11$ is odd. By Lemmas \ref{prime power
degrees of Sn} and \ref{prime power degrees of Schur cover of Sn},
$|S|_2=2^k$ is the degree of the basic spin character of $\SSS$.
However, by~\cite[Table~I]{Wales} such a degree has multiplicity
one, which contradicts the fact proved above that $G$ has at least
two irreducible characters of degree $|S|_2$.

Assume $n=2k\geq 10$ is even. By Lemmas~\ref{prime power degrees of
Sn} and~\ref{prime power degrees of Schur cover of Sn} and
\cite[Table~I]{Wales}, $\SSS$ always has the character degree
$2^{k-1}$ with multiplicity $2;$ and if $n=2^r+2$, then it has the
character degree $2^{k-1}(n-2)=2^{2^{r-1}+r}$ with multiplicity $1$.
These are in fact the only non-trivial $2$-power character degrees
of $\SSS$. As in the previous case, by comparing the multiplicity,
we see that $|S|_2\neq 2^{2^{r-1}+r}$.  Thus $|S|_2=2^{k-1}$ is the
degree of the basic spin character of $\SSS$ with multiplicity $2$.
Notice that $k\geq 5$ and hence $|S|_2=2^{k-1}\geq 2^4$.

Now let $\psi\in\Irr(G)$ with $\psi(1)=n-1$.  As $|G:G'|=2$ and
$\psi(1)$ is odd, we deduce that
$\phi=\psi\downarrow_{G'}\in\Irr(G')$ and $\phi(1)=n-1$.  Let
$\theta\in\Irr(M)$ be an irreducible constituent of
$\phi\downarrow_M$.  Then
$\phi\downarrow_M=e(\theta_1+\cdots+\theta_t)$, where
$t=|G':I_{G'}(\theta)|$, and each $\theta_i$ is conjugate to
$\theta\in\Irr(M)$.  If $\theta$ is not $G'$-invariant, then
$\phi(1)=et\theta(1)=n-1\geq \min(S)$, where $\min(S)$ is the
smallest non-trivial index of a maximal subgroup of $S$.  We see
that $\min(S)>d_1(S)\geq e(S)$, where $e(S)$ is the minimal degree
of a projective irreducible representation of $S$, and so $n-1\geq
e(S)$. If $\theta$ is $G'$-invariant and $\phi\downarrow_M=e\theta$
with $e>1$, then $e$ is the degree of a projective irreducible
representation of $S$. It follows that $n-1\geq e\geq e(S)$.  In
both cases, we always have
$$k-1=\frac{n-2}{2}\geq \frac{e(S)-1}{2}\:.$$ Therefore,
\[|S|_2=2^{k-1}\geq 2^{(e(S)-1)/2}.\] By Lemma~\ref{minproj}, we
deduce that $S\in \mathcal{C}$.  Solving the equation
$|S|_2=2^{(n-2)/2}$, we get the degree $n$.  However, by using
\cite{Atl1} and Lemma~\ref{minimal degrees of Schur cover of An}, we
can check that $\cd(G/M)\nsubseteq \cd(\SSS)$ in any of these cases.
For example, assume that $S\cong \ta E_6(2)$.  Then
$|S|_2=2^{36}=2^{(n-2)/2}$, so $n=74$.  By Lemma~\ref{minimal
degrees of Schur cover of An}, we have $d_1(\SSS)=n-1=73$ and
$d_2(\SSS)=n(n-3)/2=2627$.  Using \cite{Atl1}, we know that
$\cd(G/M)$ contains the degree $1938$.  Clearly,
$d_1(\SSS)<1938<d_2(\SSS)$, so $1938\not\in\cd(\SSS)$, hence
$\cd(G/M)\nsubseteq\cd(\SSS)$, a contradiction.

Finally we assume that $et=1$.  Then $\theta$ extends to
$\phi\in\Irr(G')$ and to $\psi\in\Irr(G)$.  Hence
$\phi\downarrow_M=\theta$ and so by Gallagher's Theorem, we have
$\psi\tau\in\Irr(G)$ for every $\tau\in\Irr(G/M)$.  In particular,
\[2^{k-1}(n-1)=\psi(1)|S|_2\in\cd(G)=\cd(\SSS),\] which is impossible by
Proposition~\ref{prop:nondegree}.
\end{proof}


\section{Eliminating simple groups other than $\Al_n$}\label{section eliminating simple groups}

We continue to eliminate the involvement of simple groups other than
$\Al_n$ in the structure of $G$ with $\CC G\cong\CC \SSS$.

\begin{proposition}\label{symmetric2} Let $G$ be an almost simple
group with non-abelian simple socle $S$.  Suppose that
$\cd^\ast(G)\subseteq\cd^\ast(\SSS)$ for some $n\geq 10$.  Then
$S\cong \Al_n$ or $S$ is isomorphic to a simple group of Lie type in
characteristic $2$.
\end{proposition}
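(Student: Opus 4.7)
The plan is to argue by contradiction via the classification of finite simple groups, paralleling the strategy of Proposition~\ref{proposition_alternating}, and to rule out every possibility for $S$ other than $\Al_n$ and simple groups of Lie type in characteristic~$2$. Throughout, two facts are used repeatedly: first, the multiset inclusion $\cd^*(G)\subseteq\cd^*(\SSS)$ combined with $|G|=\sum_{\chi\in\Irr(G)}\chi(1)^2$ gives $|G|\leq 2\cdot n!$; second, $d_1(\SSS)=n-1$ for $n\geq 10$ by Lemma~\ref{minimal degrees of Schur cover of An}, so every non-trivial character of $G$ has degree at least $n-1$.

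For $S=\Al_m$ with $m\neq n$, assume first $m\neq 6$. The $\Al_m$-character associated to the non-self-conjugate partition $(m-1,1)$ has degree $m-1$, is $\Sy_m$-invariant (being the restriction of a $\Sy_m$-character), and extends to every intermediate group $G$ with $\Al_m\leq G\leq\Sy_m$ because $G/S$ is cyclic. Thus $m-1\in\cd(G)$, which forces $m\geq n$. The order bound $m!/2=|S|\leq 2\cdot n!$ then excludes $m\geq n+1$ once $n\geq 10$, so $m=n$. For $m=6$ one checks directly using \cite{Atl1}: every almost simple group with socle $\Al_6$ has a non-trivial character of degree at most $10$, forcing $n\leq 11$, and the remaining finitely many possibilities can be eliminated by inspection.

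For the sporadic and Tits-type socles, the bounds $|G|\leq 2n!$ and $d_1(G)\geq n-1$ confine $n$ to a finite range for each $S$. As in step~(iii) of the proof of Proposition~\ref{proposition_alternating}, one selects an index $i\in\{2,\dots,7\}$ for which the value $d_i(\SSS)$ supplied by Lemmas~\ref{minimal degrees of Schur covers} and~\ref{minimal degrees of Schur cover of An} exceeds some small non-trivial degree of $G$ listed in \cite{Atl1}. The resulting Diophantine equations for $n$ either admit no integer solutions or force an irreducible character of $G$ of degree not appearing in $\cd(\SSS)$.

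For simple groups of Lie type in odd characteristic $p$, the Steinberg character of $S$ has degree $|S|_p$, is fixed by $\Aut(S)$, and therefore extends to $G$, so $|S|_p\in\cd(G)\subseteq\cd(\SSS)$. Since $|S|_p$ is an odd prime power and every prime-power spin degree of $\SSS$ listed in Lemma~\ref{prime power degrees of Schur cover of Sn} is a power of~$2$, we must have $|S|_p\in\cd(\Sy_n)$. Lemma~\ref{prime power degrees of Sn} together with $n\geq 10$ then forces $|S|_p=n-1$, so $|S|_p$ equals the minimal non-trivial degree of $\SSS$. If $S\not\cong\PSL_2(q)$, \cite[Lemma~2.4]{Tong-Viet2} supplies a non-trivial character of $G$ of degree strictly less than $|S|_p$, contradicting $d_1(G)\geq n-1$. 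For $S=\PSL_2(q)$ with $q=n-1$, an analysis of the degrees of every almost simple extension of $\PSL_2(q)$ (using that the two characters of $S$ of degree $(q\pm 1)/2$ fuse precisely under $\mathrm{PGL}_2$-level outer automorphisms; cf.~\cite[Lemma~2.5]{Tong-Viet2}) again produces a non-trivial degree of $G$ strictly below $q=n-1$. The main obstacle is this $\PSL_2(q)$ subcase, which requires careful tracking of outer automorphisms and congruences of $q$ modulo small primes; the sporadic case is tedious but amounts to a finite computation.
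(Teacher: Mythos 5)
Your proposal follows essentially the same three-pronged CFSG argument as the paper: alternating socle via the character attached to $(m-1,1)$, sporadic/Tits socle via finitely many Diophantine checks against the $d_i(\SSS)$ values, and Lie type in odd characteristic via the Steinberg character degree $|S|_p$. The minor stylistic difference — you treat $\Al_6$ directly by Atlas inspection, whereas the paper identifies $\Al_6\cong\PSL_2(9)$ and absorbs it into the Lie-type case — is immaterial.

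However, the final sentence of your $\PSL_2(q)$ analysis is materially wrong as stated. You claim that the analysis ``again produces a non-trivial degree of $G$ strictly below $q=n-1$''. This is not true in all cases, and if it were, the subcase would not be the obstacle you correctly flag it to be. The actual dichotomy (following \cite[Lemma~2.5]{Tong-Viet2}) is: either $G$ has a degree below $q$ (immediate contradiction), or $q$ satisfies one of the congruence conditions $p\neq 3$, $q\equiv 1\pmod 3$ or $p=3$, $q\equiv 1\pmod 4$, in which case $q+1$ is a degree of $G$. In the latter branch there is no degree below $q$; instead one observes that $q+1=n\in\cd(\SSS)$ is impossible for $n\ge 12$ because $d_1(\SSS)=n-1<n<2n<d_2(\SSS)$ by Lemma~\ref{minimal degrees of Schur cover of An}(2), while the stray possibility $n=10$, $q=9$ is killed by direct inspection of $\PSL_2(9)\cong\Al_6$ in the Atlas. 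You would need to replace your claimed ``degree below $q$'' with this separate contradiction mechanism to make the subcase correct.
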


\begin{proof} We make use of the classification of finite simple
groups.

\medskip

(i) $S$ is a sporadic simple group or the Tits group. Using
\cite{GAP4}, we can assume that $n\geq 19$.  By Lemma~\ref{minimal
degrees of Schur cover of An}(2), we have $d_2(\SSS)=n(n-3)/2\geq
152$.  Since $d_2(G)\geq d_2(\SSS)\geq 152$, using \cite{Atl1}, we
only need to consider the following simple groups:
\[J_3,Suz,McL,Ru,He,Co_1,Co_2,Co_3,Fi_{22},O'N, HN, Ly, Th, Fi_{23},J_4,Fi_{24}',B,M.\]
To eliminate these groups, we first observe that $n\geq p(S)$, the
largest prime divisor of $|S|$, and $d_i(G)\geq d_i(\SSS)$ for all
$i\geq 1$.  Now with the lower bound $n\geq
\textrm{max}\{19,p(S)\}$, we can find the lower bounds for
$d_i(\SSS)$ with $1\leq i\leq 7$ using Lemmas~\ref{minimal degrees
of Schur covers} and \ref{minimal degrees of Schur cover of An}.
Choose $i\in\{2,3,\cdots, 7\}$ such that $d_i(\SSS)>d_j(G)$ for some
$j\geq 1$ such that $|i-j|$ is minimal. If $i\geq j$, then we obtain
a contradiction. Otherwise, $d_j(G)\in \{d_k(\SSS)\}_{k=j}^{i-1}$.
Solving these equations for $n$, we then obtain that either these
equations have no solution, or that, for each solution of $n$, we
can find some $k\geq 1$ with $d_k(\SSS)>d_k(G)$.

For an example of such a demonstration, assume that $S=O'N$.  In
this case, we have $|{\rm Out}(S)|=2$, so $G=S$ or $G=S.2$.  Since
$p(S)=31$, we have $n\geq 31$.  Assume first that $G=S=O'N$.  Then
$d_4(O'N)=26752$ and, since $n\geq 31$, by Lemma~\ref{minimal
degrees of Schur covers}, $d_7(\SSS)\geq 26970> d_4(O'N)$.  It
follows that $d_4(O'N)\in\{d_i(\SSS)\}_{i=4}^6$.  However, we can
check that these equations are impossible. Now assume $G=O'N.2$.
Then $d_2(G)=26752<26970\leq d_7(\SSS)$ so that $d_2(G)\in
\{d_i(\SSS)\}_{i=2}^6$.  As above,  these equations cannot hold for
any $n\geq 31$.  Thus $\cd(G)\nsubseteq \cd(\SSS)$.

For another example, let $S=M$.  Since $|{\rm Out}(S)|=1$, we have
$G=S$ so that $p(S)=71\in \pi(\SSS)$ and hence $n\geq 71$.  As
$d_1(M)=196883<914480\leq d_7(\SSS)$, we deduce that $d_1(M)\in
\{d_i(\SSS)\;|\;i=1,\cdots,6\}$.  Solving these equations, we obtain
$n=196884$.  But then $d_2(\SSS)>21296,876=d_2(M)$.  Thus
$\cd(M)\nsubseteq \cd(\SSS)$.

\medskip

(ii) $S=\Al_m$ with $m\geq 7$. Note that we consider
$\Al_5\cong\PSL_2(5)$ and $\Al_6\cong\PSL_2(9)$ as groups of Lie
type. Let $\lambda=(m-1,1)$, a partition of $m$. Since $m\geq 7$,
$\lambda$ is not self-conjugate, hence the irreducible character
$\chi_\lambda$ of $\Sy_m$ is still irreducible upon restriction to
$\Al_m$.  Note that ${\rm Aut}(\Al_m)=\Sy_m$ as $m\geq 7$.  Then
$G\in\{\Al_m,\Sy_m\}$ and $G$ has an irreducible character of degree
$m-1$.  Since ${\cd}(G)\subseteq \cd(\SSS)$, we have $m-1\geq
d_1(\SSS)=n-1$, so $m\geq n$.  If $m=n$ then we are done. On the
other hand, if $m>n$ then
\[|G|\geq |S|=|\Al_m|>4|\Al_n|=|\SSS|\] and this violates the
hypothesis $\cd^\ast(G)\subseteq\cd^\ast(\SSS)$.

\medskip

(iii) $S$ is a  simple group of Lie type in odd characteristic.
Suppose that $S=G(p^k)$, a simple group of Lie type defined over a
field of $p^k$ elements with $p$ odd. Let $\St_S$ be the Steinberg
character of $S$. Then, as $\St$ extends to $G$ and
$\St_S(1)=|S|_p$, we have $|S|_p\in\cd(\SSS)$. Using
Lemma~\ref{prime power degrees of Schur cover of Sn}, which says
that all possible prime power degrees of spin characters of $\Sy_n$
are even, we deduce that $|S|_p\in\cd(\Sy_n)$. By Lemma~\ref{prime
power degrees of Sn}, we then obtain that  $|S|_p=n-1$ since $n\geq
10$.  By Lemma~\ref{minimal degrees of Schur cover of An},
$n-1=d_1(\SSS)$ is the smallest non-trivial degree of $\SSS$. Assume
first that $S\neq \mathrm{PSL}_2(q)$. Then $d_1(G)<|S|_p=d_1(\SSS)$
by~\cite[Lemma~2.4]{Tong-Viet2}, which is a contradiction as
$\cd(G)\subseteq \cd(\SSS)$. Now it remains to consider the case
$S=\mathrm{PSL}_2(q)$. We have $q=n-1\geq 9$.  If $G$ has a
character degree which is smaller than $|S|_p=q$, then we obtain a
contradiction as before. So, by~\cite[Lemma~2.5]{Tong-Viet2}, we
have $p\neq3$ and $q\equiv1~(\bmod~3)$ or $p=3$ and
$q\equiv1~(\bmod~4)$.  In both cases, $G$ has an irreducible
character of degree $q+1=n=d_1(\SSS)+1$.  If $n\geq 12$, then
$d_2(\SSS)\geq 2n>n>d_1(\SSS)$ by Lemma~\ref{minimal degrees of
Schur cover of An}, so that $n$ is not a character degree of $\SSS$.
Assume that $10\leq n\leq 11$. Then $n=10$ and $q=9$.  However,
using \cite{Atl1}, we can check that $\cd(G)\nsubseteq \cd(\SSS)$
for every almost simple group $G$ with socle $\PSL_2(9)\cong \Al_6$.
\end{proof}


Combining Propositions~\ref{symmetric2} and~\ref{symmetric2bis}, we
obtain the following results, which will be crucial in the proof of
Theorem~\href{theorem_alternating}{C}.

\begin{proposition}\label{proposition main}
Let $G$ be a finite group and let $M\subseteq G'$ be a normal subgroup of~$G$
such that $G/M$ is an almost simple group with socle $S\neq \Al_n$, where $|G:G'|=2$ and $G'/M\cong S$.  Then
$\cd^\ast(G)\neq\cd^\ast(\SSS)$.
\end{proposition}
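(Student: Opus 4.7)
The plan is to prove Proposition~\ref{proposition main} as a short contradiction argument that merely packages Propositions~\ref{symmetric2} and~\ref{symmetric2bis}. Suppose for contradiction that $\cd^\ast(G) = \cd^\ast(\SSS)$. Since $M$ is a normal subgroup of $G$, every irreducible character of $G/M$ inflates to an irreducible character of $G$, so we obtain the multiset inclusion $\cd^\ast(G/M) \subseteq \cd^\ast(G) = \cd^\ast(\SSS)$.

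Next I would apply Proposition~\ref{symmetric2} to the almost simple group $G/M$ with socle $S$. Under the inclusion just derived (and the standing hypothesis $n \geq 10$ inherited from both earlier propositions), that result forces $S \cong \Al_n$ or $S$ to be a simple group of Lie type in characteristic~$2$. The hypothesis $S \neq \Al_n$ rules out the first alternative, so $S$ must be a simple group of Lie type in characteristic~$2$.

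At this point the data in the proposition — namely $M \subseteq G'$, $|G:G'| = 2$, and $G'/M \cong S$ with $S$ of Lie type in characteristic~$2$ — are exactly the hypotheses of Proposition~\ref{symmetric2bis}. Invoking that proposition yields $\cd^\ast(G) \neq \cd^\ast(\SSS)$, contradicting our standing assumption and finishing the argument.

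There is no genuine new obstacle here; the only thing to verify is that the hypotheses of the two invoked propositions line up precisely with the given structural data on $G$, $G'$, and $M$. In particular, the inclusion of character-degree multisets from $G$ to the quotient $G/M$ is immediate, the almost-simple structure of $G/M$ with socle $S$ is given, and the conditions $|G:G'| = 2$ and $G'/M \cong S$ transfer verbatim into Proposition~\ref{symmetric2bis}. Thus the proof reduces to this routine verification and a one-line contradiction.
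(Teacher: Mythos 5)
Your reduction for $n\geq 10$ is exactly the paper's: inflate to get $\cd^\ast(G/M)\subseteq\cd^\ast(G)=\cd^\ast(\SSS)$, apply Proposition~\ref{symmetric2} to force $S$ to be a simple group of Lie type in characteristic $2$ (the alternative $S\cong\Al_n$ being excluded by hypothesis), and then contradict Proposition~\ref{symmetric2bis}. However, there is a genuine gap: $n\geq 10$ is \emph{not} a standing hypothesis of Proposition~\ref{proposition main}. The paper's blanket convention is only $n\geq 5$, and this proposition is invoked in the proof of Theorem~C for every $n\geq 5$; the bound $n\geq 10$ occurs only inside the hypotheses of Propositions~\ref{symmetric2} and~\ref{symmetric2bis}, so neither result can be applied when $5\leq n\leq 9$. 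Your argument therefore leaves the cases $5\leq n\leq 9$ entirely unproved, and they do require work, since the two cited propositions give no information there.

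The paper closes these small cases by a separate argument: if $\cd^\ast(G)=\cd^\ast(\SSS)$, then $|G|=2n!$, hence $|S|$ divides $2n!$ and $\pi(S)\subseteq\pi(\SSS)\subseteq\{2,3,5,7\}$; the Huppert--Lempken classification of simple groups whose order is divisible by at most four primes then produces a short explicit list of candidates for $S$ (various $\PSL$, $\PSU$, $\PSp$, alternating and sporadic groups), and for each candidate other than $\Al_n$ one checks directly that $\cd(G/M)\nsubseteq\cd(\SSS)$. You would need to supply an argument of this kind to make your proof complete.
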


\begin{proof} If $n\geq 10$, then the result follows from Propositions~\ref{symmetric2}
and~\ref{symmetric2bis}. It remains to assume that $5\leq n\leq 9$
and suppose by contradiction that $\cd^\ast(G)=\cd^\ast(\SSS)$. Then
$|G|=2n!$ and so $|S|\mid 2n!$, hence $\pi(S)\subseteq
\pi(\SSS)\subseteq \{2,3,5,7\}$.  By
\cite[Theorem~III]{Huppert-Lempken}, one of the following holds:

\begin{enumerate}
  \item If $\pi(S)=\{2,3,5\}$, then $S\cong \Al_5,\Al_6$ or $\PSp_4(3)$.
  \item If $\pi(S)=\{2,3,7\}$, then $S\cong \PSL_2(7),\PSL_2(8)$ or $\PSU_3(3)$.
  \item If $\pi(S)=\{2,3,5,7\}$, then $S\cong \Al_k$  with $7\leq k\leq
  10$, $J_2$, $\PSL_2(49),\PSL_3(4)$, $\PSU_3(5)$, $\PSU_4(3),\PSp_4(7),\PSp_6(2)$ or $\textrm{P}\Omega_8^+(2)$.
\end{enumerate} Now it is routine to check that $\cd(G/M)\nsubseteq\cd(\SSS)$ unless $S\cong\Al_n$, where $G/M$ is almost simple with socle $S$.
\end{proof}

\begin{proposition}\label{symmetric3} Let $G$ be a finite group and let $M\subseteq G'$ be a normal subgroup of~$G$.
Suppose that $\cd^\ast(G)=\cd^\ast(\SSS)$ and $G/M\cong G'/M\times
C_2\cong S^k\times C_2$ for some positive integer $k$ and some
non-abelian simple group $S$. Then $k=1$ and $S\cong \Al_n$.
\end{proposition}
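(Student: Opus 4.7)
The strategy is first to reduce to $k=1$ via a multiplicity count on the smallest nontrivial character degree of $G/M$, and then to invoke the almost-simple classification from Propositions~\ref{symmetric2} and~\ref{symmetric2bis} to identify $S$ as $\Al_n$.

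Since each irreducible character of $G/M$ inflates to a distinct irreducible character of $G$, we have the multiset inclusion $\cd^\ast(G/M)\subseteq\cd^\ast(G)=\cd^\ast(\SSS)$. As $G/M\cong S^k\times C_2$, the smallest nontrivial degree of $G/M$ is $d_1(S)$, and Lemma~\ref{minimal degrees of Schur cover of An} yields $d_1(S)\geq d_1(\SSS)=n-1$ for $n\geq 8$ (the range $5\leq n\leq 7$ is handled by direct ATLAS inspection). Moreover, a character $\chi\in\Irr(S)$ of degree $d_1(S)$ can be placed in each of the $k$ tensor slots of $S^k$ (with the other factors trivial) and further tensored with either character of $C_2$, producing $2k$ distinct irreducible characters of $G/M$ of degree $d_1(S)$; hence the multiplicity of $d_1(S)$ in $\cd^\ast(\SSS)$ is at least~$2k$.

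To conclude $k=1$: when $d_1(S)=n-1$, Lemmas~\ref{prime power degrees of Sn} and~\ref{kleshchev-tiep} show that for $n\geq 8$ the only irreducible characters of $\SSS$ of degree $n-1$ are $\chi_{(n-1,1)}$ and its twist by the sign character (no spin character has such a small degree), so $n-1$ has multiplicity exactly~$2$ in $\cd^\ast(\SSS)$; hence $2k\leq 2$, forcing $k=1$. When $d_1(S)>n-1$, no irreducible character of $G/M$ has degree $n-1$, so the two characters of $G$ of degree $n-1$ must be non-trivial on $M$; a Clifford-theoretic analysis, using that any non-trivial $G'$-orbit on $\Irr(M)\setminus\{1_M\}$ has size bounded below by the minimal permutation index of $S^k$ together with the constraint that projective character degrees of $S^k$ dividing $n-1$ are severely restricted, produces a contradiction. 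This second case is the main technical obstacle.

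Once $k=1$, we have $G/M\cong S\times C_2$ and hence $\cd(S)\subseteq\cd(\SSS)$. Viewing $S$ itself as almost simple, Proposition~\ref{symmetric2} forces either $S\cong\Al_n$ or $S$ to be a simple group of Lie type in characteristic~$2$ (for $n\geq 10$; the cases $5\leq n\leq 9$ mirror the argument of Proposition~\ref{proposition main}, eliminating each prime-spectrum candidate by character-table inspection). If $S$ were of Lie type in characteristic~$2$, the hypothesis $G/M\cong G'/M\times C_2$ yields $G'/M\cong S$ and $[G:G']=2$, so Proposition~\ref{symmetric2bis} would give $\cd^\ast(G)\neq\cd^\ast(\SSS)$, contradicting the hypothesis. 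Hence $S\cong\Al_n$.
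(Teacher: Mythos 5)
Your first case ($d_1(S)=n-1$, multiplicity count forcing $2k\le 2$) is a correct and nice observation, and your endgame once $k=1$ is established (apply Proposition~\ref{symmetric2} to $S$ itself, then Proposition~\ref{symmetric2bis} for the Lie-type-in-characteristic-$2$ possibility) matches the paper. But the second case, $d_1(S)>n-1$, is a genuine gap, not a routine technicality: you assert that a Clifford-theoretic analysis of the two degree-$(n-1)$ characters of $G$ ``produces a contradiction,'' but no such argument is given, and it is exactly this situation ($S$ of Lie type in even characteristic with all degrees large) that is the hard core of Sections~7--8 of the paper. Note that the paper's own Clifford analysis of the degree-$(n-1)$ character (in the proof of Proposition~\ref{symmetric2bis}) takes $k=1$ as a standing hypothesis and feeds into Lemma~\ref{minproj} and Proposition~\ref{prop:nondegree}; it cannot be quoted to \emph{establish} $k=1$, and bounding orbit sizes by the minimal permutation index of $S^k$ (which equals that of $S$, so does not grow with $k$) gives you no leverage on $k$.

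The paper avoids this trap by reordering the steps: it first applies Proposition~\ref{symmetric2} to $S$ alone (using $\cd^\ast(S)\subseteq\cd^\ast(\SSS)$) to conclude $S\cong\Al_n$ or $S$ is of Lie type in characteristic $2$, and only then proves $k=1$ separately in each case. For $S\cong\Al_n$ a simple order comparison $|M|(n!)^{k-1}=2^k$ kills $k\ge 2$. For $S$ of Lie type in characteristic $2$ the Steinberg character gives $G/M$ the distinct nontrivial $2$-power degrees $|S|_2^k>|S|_2^{k-1}>\cdots$, while by Lemmas~\ref{prime power degrees of Sn} and~\ref{prime power degrees of Schur cover of Sn} the group $\SSS$ has at most two distinct nontrivial $2$-power degrees, with known multiplicities; comparing degrees and multiplicities rules out $k\ge 2$. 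You should replace your second case by an argument of this kind (the Steinberg/$2$-power count is the natural substitute for your $d_1$ count when $d_1(S)>n-1$); as written, the proof of $k=1$ is incomplete precisely where it matters most.
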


\begin{proof} Since $\cd^\ast(S)\subseteq \cd^\ast(S^k)$, the hypotheses imply that
$$\cd^\ast(S)\subseteq\cd^\ast(\SSS)\:.$$

Assume first that $5\leq n\leq 9$.  Since $|S^k|=|S|^k$ divides
$|\SSS|=2n!$, we deduce that $\pi(S)\subseteq \pi(\SSS)$ and, in
particular, $\pi(S)\subseteq \{2,3,5,7\}$. The possibilities for $S$
are listed in the proof of Proposition~\ref{proposition main} above.
Observe that $|S|$ is always divisible by  a prime $r$ with $r\geq
5$.  Hence, $r^k\mid |\SSS|$, which implies that $k=1$ as $|\SSS|$
divides $2\cdot 9!$.  Now the fact that $S\cong \Al_n$ follows
easily.

From now on we can assume that $n\geq 10$.  Using
Proposition~\ref{symmetric2}, we obtain $S=\Al_n$ or $S$ is a simple
group of Lie type in even characteristic. It suffices to show that
$k=1$ and then the result follows from
Proposition~\ref{symmetric2bis}.

Assume that the latter case holds. Then $S$ is a simple group of Lie
type in characteristic $2$.  By Lemmas~\ref{prime power degrees of
Sn} and \ref{prime power degrees of Schur cover of Sn}, $\SSS$ has
at most two distinct non-trivial $2$-power character degrees, which
are $n-1$ and $2^{\lfloor (n-1)/2\rfloor}$, or $2^{\lfloor
(n-1)/2\rfloor}$ and $2^{2^{r-1}+r}$ with $n=2^r+2$.  By way of
contradiction, assume that $k\geq 2$.  If $k\geq 3$, then $G/M\cong
S^k\times C_2$ has irreducible characters of degrees
\[|S|_2^k>|S|_2^{k-1}>|S|_2^{k-2}>1.\] Obviously, this is impossible
as $\cd(G/M)\subseteq \cd(\SSS)$.  Therefore, $k=2$.  In this case,
$G/M$ has character degree $|S|_2^2$ with multiplicity at least $2$
and $|S|_2$ with multiplicity at least $4$.   It follows that either
$2^{\lfloor (n-1)/2\rfloor}=|S|_2^2$ and $n-1=|S|_2$, or
$2^{2^{r-1}+r}=|S|_2^2$ and $2^{\lfloor (n-1)/2\rfloor}=|S|_2$.
However, both cases are impossible by comparing the multiplicity.

It remains to eliminate the case $S\cong \Al_n$ and $k\geq 2$. By
comparing the orders, we see that
$$2(n!/2)^k|M|=2n!\:.$$ After simplifying, we obtain
\[|M|(n!)^{k-1}=2^k.\] Since $n\geq 10$, we see that, if $k\geq 2$, then the left side is divisible by $5$ while
the right side is not. We conclude that $k=1$  and the proof is now
complete.
\end{proof}


\section{Completion of the proof of Theorem~\href{theorem_symmetric}{C}}\label{section theorem C}

We need one more result before proving Theorem~C.

\begin{proposition}\label{symmetric1} Let $G$ be a finite group and let $S$ be a non-abelian simple group. Suppose
that $|G:G'|=2$ and $G'\cong S^2$ is the unique minimal normal
subgroup of $G$.  Then $\cd(G)\nsubseteq\cd(\SSS)$.
\end{proposition}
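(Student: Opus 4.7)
The plan is to use Clifford theory to pin down $\cd(G)$, and then combine with the non-existence result of Proposition~\ref{prop:nondegree} together with the preceding structural results to exhibit a forbidden character degree of $G$.

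First I would observe that since $G'\cong S\times S$ is the unique minimal normal subgroup of $G$ and $|G:G'|=2$, any representative $\sigma$ of $G\setminus G'$ must swap the two simple factors of $G'$; otherwise $S\times\{1\}$ would be a proper nontrivial normal subgroup of $G$, contradicting the minimality of $G'$. Applying Clifford theory to $\Irr(G')=\{\alpha\otimes\beta : \alpha,\beta\in\Irr(S)\}$, the characters fixed by $\sigma$ are exactly those with $\beta=\alpha^\tau$, where $\tau\in\Aut(S)$ is the automorphism induced by $\sigma$ (in particular $\alpha(1)=\beta(1)$); each such fixed character extends in two ways to $G$, yielding irreducible characters of degree $\alpha(1)^2$. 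The remaining characters of $G'$ form $\langle\sigma\rangle$-orbits of size two, each inducing to an irreducible of $G$ of degree $2\alpha(1)\beta(1)$. In particular,
\[
\{\alpha(1)^2 : \alpha\in\Irr(S)\}\cup\{2\alpha(1) : \alpha\in\Irr(S)\setminus\{1_S\}\}\subseteq\cd(G).
\]

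Assuming for contradiction that $\cd(G)\subseteq\cd(\SSS)$, the inclusion $2d_1(S)\in\cd(\SSS)$ together with $d_1(\SSS)=n-1$ (Lemma~\ref{minimal degrees of Schur cover of An}) yields $d_1(S)\geq (n-1)/2$; more generally each square $\alpha(1)^2$ must lie in $\cd(\SSS)$, which sharply restricts the possible degrees of $S$. The classification of finite simple groups combined with arguments parallel to those in the proofs of Propositions~\ref{symmetric2} and~\ref{symmetric2bis} narrows the options for $S$ to alternating groups $\Al_m$ (with $m$ close to $n$) or groups of Lie type, and these residual cases are handled by choosing $\alpha,\beta\in\Irr(S)$ suitably (typically taking $\alpha$ of a prime-power degree controlled by Lemmas~\ref{prime power degrees of Schur cover of Sn} and~\ref{prime power degrees of Schur cover of An}) so that the degree $2\alpha(1)\beta(1)$ equals the forbidden value $2^{\lfloor(n-2)/2\rfloor}(n-1)$, contradicting Proposition~\ref{prop:nondegree}. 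The small-$n$ cases are handled by direct inspection using~\cite{Atl1} and~\cite{GAP4}, together with the fact that $\pi(S)\subseteq\pi(\SSS)$ constrains $S$ to a short list as in the proof of Proposition~\ref{proposition main}.

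The main obstacle is producing such a specific forbidden degree uniformly across all non-abelian simple $S$, particularly for groups of Lie type in even characteristic, where the Steinberg-type $2$-power degree $|S|_2$ interacts in a delicate way with the basic spin degree $2^{\lfloor(n-1)/2\rfloor}$ of $\SSS$, mirroring the difficulty met in Proposition~\ref{symmetric2bis}. The proof likely handles this case by combining the restriction $d_1(S)\geq (n-1)/2$ with the short classification of prime-power character degrees in Lemmas~\ref{prime power degrees of Sn}--\ref{prime power degrees of Schur cover of An} to show that both the Steinberg-square $|S|_2^2$ and the induced degree $2|S|_2\cdot(n-1)$ cannot simultaneously lie in $\cd(\SSS)$, invoking Proposition~\ref{prop:nondegree} to rule out the latter.
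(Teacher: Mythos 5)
Your Clifford-theoretic setup is correct and matches the paper: since $G'\cong S\times S$ is the unique minimal normal subgroup, any $\sigma\in G\setminus G'$ swaps the two factors; taking $\theta=\alpha\otimes 1_S$ gives an induced irreducible of degree $2\alpha(1)$, and taking $\varphi=\alpha\otimes\alpha$ (which is $G$-invariant and extends since $G/G'$ is cyclic) gives a degree of $\alpha(1)^2$. Thus $2a,a^2\in\cd(\SSS)$ for every $a\in\cd(S)\setminus\{1\}$, which is exactly the paper's key observation. You also correctly flag that Lie type in even characteristic is the delicate case.

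However, your proposed completion does not work and diverges from the paper. You speculate that Proposition~\ref{prop:nondegree} is invoked here, by arranging $\alpha,\beta\in\Irr(S)$ so that $2\alpha(1)\beta(1)$ hits the forbidden value $2^{\lfloor(n-2)/2\rfloor}(n-1)$, or alternatively that $2|S|_2(n-1)$ arises as a degree. There is no reason $S$ should have irreducible characters of the required degrees (e.g., $S$ need not have a character of degree $n-1$ at all), so this plan cannot be carried out uniformly; and in fact the paper does not use Proposition~\ref{prop:nondegree} in this proof at all. What the paper actually does, after establishing the two inclusions $2a,a^2\in\cd(\SSS)$, is: first apply the It\^o--Michler theorem to an odd prime $r$ dividing $|S|$ to get $r\mid a$ for some $a\in\cd(S)$, whence $r^2\mid 2n!$ and $n\geq 2r$ (a step you omit). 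Then for the case $S$ of Lie type, it feeds $a=\St_S(1)=|S|_p$ into the inclusions to get $2|S|_p$ and $|S|_p^2$ in $\cd(\SSS)$; using a smaller degree $1<x<|S|_p$ it shows both of these exceed $d_1(\SSS)=n-1$, so by the classification of prime-power degrees (Lemmas~\ref{prime power degrees of Sn} and~\ref{prime power degrees of Schur cover of Sn}) they are forced to be $2$-powers and indeed the two specific $2$-power degrees of $\SSS$ with $n=2^r+2$. Writing $|S|_p=2^N$ yields the simultaneous equations $2^{r-1}+r=2N$ and $2^{r-1}=N+1$, which force $2^{r-1}=r+2$ with no solution for $r\geq 3$. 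This is a self-contained arithmetic contradiction, not an appeal to the non-existence result of Section~\ref{section:nondegree}. The alternating and sporadic cases are likewise resolved by direct comparison of $2a$ and $a^2$ against the known $d_i(\SSS)$, not by constructing a forbidden degree. So the main gap in your write-up is that the mechanism you conjecture for finishing the proof is both unsubstantiated and not the one used; you would need to replace it with the Steinberg-character/$2$-power-degree argument just described.
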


\begin{proof} Assume, to the contrary, that ${\cd}(G)\subseteq {\cd}(\SSS)$.
Let $\alpha\in {\rm Irr}(S)$ with $\alpha(1)>1$ and put
$\theta=\alpha\otimes 1\in {\rm Irr}(G')$.  Observe that $\theta$ is
not $G$-invariant, so that $I_G(\theta)=G'$; hence $\theta^G\in {\rm
Irr}(G)$ and so $\theta^G(1)=2\alpha(1)\in \cd(\SSS)$.  On the other
hand, if $\varphi=\alpha\otimes \alpha\in {\rm Irr}(G')$, then
$\varphi$ is $G$-invariant and, since $G/G'$ is cyclic, we deduce
that $\varphi$ extends to $\psi\in {\rm Irr}(G)$, so
$\psi(1)=\alpha(1)^2\in \cd(\SSS)$.  Thus, we conclude that
\begin{equation}\label{equation-proof7.2 1}\text{if } a\in \cd(S)\backslash\{1\} \text{ then }
2a,a^2\in \cd(\SSS).
\end{equation}

Let $r$ be an odd prime divisor of $|S|$. The Ito-Michler theorem then
implies that $r$ divides some character degree, say $a$, of $S$.
Since $a^2\in {\rm cd}(\SSS)$ by \eqref{equation-proof7.2 1}, we
have $r^2\mid 2n!$ and hence  $n\geq 2r$ as $r>2$.  Thus, we have
shown that
\begin{equation}\label{equation-proof7.2}\text{if } r\in\pi(S)-\{2\},  \text{ then } r^2\mid 2n! \text{ and } n\geq
2r.\end{equation}

Using the classification of finite simple groups, we consider the
following cases.

\medskip

(i) $S=\Al_m$, with $m\geq 7$.  As $7\in\pi(S)$, it follows from
\eqref{equation-proof7.2} that $n\geq 14$.  Since $m-1\in \cd(S)$,
both $2(m-1)$ and $(m-1)^2$ are in $\cd(\SSS)$ by
\eqref{equation-proof7.2 1}. As $m\geq 7$,  we also have that
$m(m-3)/2,(m-1)(m-2)/2\in \cd(S)$ and so $m(m-3),(m-1)(m-2)\in
\cd(\SSS)$.  We claim that $m<n$.  Suppose for a contradiction that
$m\geq n$.  As $n\geq 14$, by Lemma~\ref{prime number between n and
d2}, there exists a prime $r$ such that $n/2<r\leq n$.  Hence, the
$r$-part of $2n!$ is just $r$.  However, as $r\leq n\leq m$, $r$
divides $|\Al_m|$ and so $r^2\mid 2n!$ by \eqref{equation-proof7.2},
a contradiction. Thus $m<n$ as claimed.

Since $m\geq 7$, we obtain that
\begin{equation}\label{equation-proof7.2 2}1<2(m-1)<m(m-3)<(m-1)(m-2)<(m-1)^2.\end{equation} By Lemma \ref{minimal
degrees of Schur cover of An}(1), we have  $d_1(\SSS)=n-1$, so
$2(m-1)\geq n-1$ and thus $n\leq 2m-1$.  As $n\geq 14$, we deduce
that $m\geq 8$.

Assume first that $m\in\{8,9,10\}$.  Then $(m-1)^2\in\cd(\SSS)$ is a
prime power. As $3^3\neq (m-1)^2> d_1(\SSS)$, Lemmas \ref{prime
power degrees of Sn} and \ref{prime power degrees of Schur cover of
Sn} yield that $(m-1)^2$ is a power of $2$ and thus $m=9$.  Since
$\{2^3,3^3\}\subseteq \cd(\Al_9)$, we have $\{2^4,2^6,3^6\}\subseteq
\cd(\SSS)$ by \eqref{equation-proof7.2}. As $n\geq 14$, we have
$2^{\lfloor (n-1)/2\rfloor}>2^4$ and $n(n-3)/2>2^4$, so
$d_2(\SSS)>2^4$ by Lemma~\ref{minimal degrees of Schur cover of
An}(2). This forces $2^4=d_1(\SSS)=n-1$ or, equivalently, $n=17$.
But then Lemmas \ref{prime power degrees of Sn} and \ref{prime power
degrees of Schur cover of Sn} yield $3^6=d_1(\SSS)$,  which is
impossible.

Assume next that $m\geq 11$.  Then $n\geq 22$ by
\eqref{equation-proof7.2}. By Lemma \ref{minimal degrees of Schur
cover of An}(2), we have $d_2(\SSS)>2n>2m$. In particular,
$2(m-1)<d_2(\SSS)$. By~\eqref{equation-proof7.2 1}, we have
$2(m-1)\in\cd(\SSS)$, hence $2(m-1)=d_1(\SSS)=n-1$, which implies
that $n=2m-1$.  By Lemma \ref{minimal degrees of Schur cover of
An}(3), we have $d_3(\SSS)=(n-1)(n-2)/2$ and thus by
\eqref{equation-proof7.2 2}, we obtain that
\[(m-1)(m-2)\geq d_3(\SSS)=(n-1)(n-2)/2=(m-1)(2m-3).\] After
simplifying, we have $m-2\geq 2m-3$ or, equivalently, $m\leq 1$, a
contradiction.

\medskip

(ii) $S$ is a finite simple group of Lie type in characteristic $p$,
with $S\neq {}^2F_4(2)'$.  As $|S|$ is always divisible by an odd
prime $r\geq 5$, we have $n\geq 2r\geq 10$ by
\eqref{equation-proof7.2}.  Let $\St_S$ denote the Steinberg
character of $S$.  We can check that $\St_S(1)=|S|_p\geq 4$.  Since
$\St_S(1)\in {\cd}(S)$,  $2\St_S(1)$ and $\St_S(1)^2$ are character
degrees of $\SSS$ by~\eqref{equation-proof7.2 1}. As
$2\St_S(1)<\St_S(1)^2$, we have $\St(1)^2>d_1(\SSS)=n-1$.  Since
$n\geq 10$, Lemmas~\ref{prime power degrees of Sn} and~\ref{prime
power degrees of Schur cover of Sn} yield that $\St_S(1)^2$ is a
$2$-power. Hence, $2\St(1)$ is also a $2$-power. By
\cite[Lemma~8]{Tong-Viet3}, there exists a non-trivial character
degree $x$ of $S$ such that $1<x<\St_S(1)$.  It follows that
$2x<2\St_S(1)$ is also a character degree of $\SSS$.  Therefore,
$2\St_S(1)>d_1(\SSS)=n-1$.  Hence, $\SSS$ has two distinct
non-trivial $2$-power character degrees, neither of which is $n-1$.
It follows that $n=2^r+2\geq 10$ and furthermore
$$2\St_S(1)=2^{\lfloor (n-1)/2\rfloor} \text{ and }
\St_S(1)^2=2^{2^{r-1}+r}$$ by Lemma~\ref{prime power degrees of
Schur cover of Sn}. We write $\St_S(1)=2^N$. Then $2^{r-1}+r=2N$ and
$2^{r-1}=N+1$ since $\lfloor (n-1)/2\rfloor=2^{r-1}$.  Solving these
equations, we have $r=N-1$ and $2^{r-1}=r+2$.  As $n\geq10$, we
deduce that $r\geq 3$.  In this case, it is easy to check that the
equation $2^{r-1}=r+2$ has no integer solution.

\medskip

(iii) $S$ is a sporadic simple group or the Tits group. Since the
arguments are fairly similar, we consider just the case $S=J_3$ as
an example. Recall that $p(S)$ is the largest prime divisor of
$|S|$.
By~\eqref{equation-proof7.2}, we have $n\geq 2p(S)$. 
Since $n\geq 2p(S)\geq 22$, we have $d_2(\SSS)=n(n-3)/2\geq
p(S)(2p(S)-3)$ by applying Lemma \ref{minimal degrees of Schur cover
of An}(3). For $i=1,2$, we have $2d_i(S)\in \cd(G)\subseteq
\cd(\SSS)$ with $1<2d_1(S)<2d_2(S)$. For each possibility for $S$,
we can check using \cite{Atl1} that $p(S)(2p(S)-3)>2d_2(S)$, hence
$d_2(\SSS)>2d_2(S)>2d_1(S)$, which is a contradiction.
\end{proof}


We are now ready to prove the main
Theorem~\href{theorem_symmetric}{C}, which we restate below for the
reader's convenience.

\begin{theoremC}\label{theorem_symmetric-1} Let $n\geq5$. Let $G$ be a finite group
and $\SSS^\pm$ the double covers of $\Sy_n$. Then $\CC G\cong \CC
\SSS^+$ \textup{(}or equivalently $\CC G\cong \CC\SSS^-$\textup{)}
if and only if $G\cong \SSS^+$ or $G\cong\SSS^-$.
\end{theoremC}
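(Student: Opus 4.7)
\emph{Proof proposal.} Suppose $G$ is a finite group with $\CC G\cong\CC\SSS$, equivalently $\cd^\ast(G)=\cd^\ast(\SSS)$. Since $\SSS$ has exactly two linear characters, $|G:G'|=2$. My first goal is to establish $G'=G''$. If this fails, $G/G''$ is a metabelian group with abelian normal subgroup $G'/G''$ of index~$2$, so Ito's theorem forces every $\chi\in\Irr(G/G'')$ to satisfy $\chi(1)\in\{1,2\}$. Combined with the two linear characters of $G/G''$, the sum-of-squares identity gives $|G/G''|=2+4b$, and if $G'\neq G''$ then $b\geq 1$, producing an irreducible character of $G$ of degree~$2$. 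This contradicts $d_1(\SSS)\geq n-1\geq 4$ (from Lemma~\ref{minimal degrees of Schur cover of An} for $n\geq 8$, and direct check for $n\leq 7$). Hence $G'=G''$.

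Next I select $M\lhd G$ maximal subject to $M\subsetneq G'$, so that $G'/M$ is a chief factor of $G/M$. Perfection of $G'$ makes this chief factor nonabelian, hence $G'/M\cong S^k$ for some nonabelian simple $S$. The generator of $G/G'\cong C_2$ permutes the $k$ simple factors; minimality of the chief factor forces either $k=1$, or $k=2$ with the two factors swapped (any swap action on $k\geq 3$ factors leaves a proper invariant subproduct). If $k=2$, then $G'/M$ is the unique minimal normal subgroup of $G/M$ (otherwise a second minimal normal subgroup would be a central $C_2$, forcing $G/M\cong S^2\times C_2$ with trivial swap, a contradiction), and Proposition~\ref{symmetric1} applied to $G/M$ yields $\cd(G/M)\nsubseteq\cd(\SSS)$, contradicting $\cd(G/M)\subseteq\cd^\ast(G)=\cd^\ast(\SSS)$. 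Thus $k=1$, and then $G/M$ is either $S\times C_2$ or almost simple with socle $S$; Propositions~\ref{symmetric3} and~\ref{proposition main} respectively force $S\cong\Al_n$.

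Comparing orders gives $|G'|=n!$ and $|G'/M|=n!/2$, so $|M|=2$, whence $M\leq Z(G)$ and $G'$ is a perfect central extension of $\Al_n$ by $C_2$; the only such extension is the double cover, so $G'\cong\AAA$. Hence $G$ is an extension of $\AAA$ by $C_2$. For $n\geq 5$, the candidate isomorphism types for $G$ with $G'\cong\AAA$ and $|G:G'|=2$ are $\SSS^+$, $\SSS^-$, and $\AAA\times C_2$. The last is ruled out by class-counting: $|\cd^\ast(\AAA\times C_2)|=2k(\AAA)>k(\SSS)=|\cd^\ast(G)|$ via Lemma~\ref{comparing number of classes of An and Sn}. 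The exceptional small cases $n=5,6,7$, where additional covers of $\Al_n$ could in principle produce extra candidate extensions, are verified directly from the ATLAS. Therefore $G\cong\SSS^\pm$.

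The main obstacle is the chief-factor analysis: ruling out $k=2$ via Proposition~\ref{symmetric1} and forcing $S\cong\Al_n$ in the almost-simple subcase via Proposition~\ref{proposition main} both ultimately depend on the nonexistence Proposition~\ref{prop:nondegree} for the critical degree $2^{[(n-2)/2]}(n-1)$. That nonexistence result requires the delicate arithmetic-combinatorial argument of Theorem~\ref{thm:bound} together with a simultaneous hook-length and bar-length analysis handling ordinary and spin characters of $\SSS$ in tandem, which is the hardest technical ingredient of the whole proof.
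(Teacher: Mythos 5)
Your argument tracks the paper's proof closely through the chief-factor analysis (the variation using Ito's theorem to show $G'=G''$ is fine, as is the reorganization of the $k\le 2$ case analysis), but the final step has a real gap: the asserted classification ``the candidate isomorphism types for $G$ with $G'\cong\AAA$ and $|G:G'|=2$ are $\SSS^+$, $\SSS^-$, and $\AAA\times C_2$'' is both unjustified and incorrect. There is a fourth candidate, the central product $\AAA *_{C_2} C_4$ (glue a cyclic group of order $4$ to $\AAA$ along the common central $C_2$). This group has $G'=\AAA$, index $2$, and center $C_4$ --- so it is not isomorphic to $\SSS^\pm$ (center $C_2$) or to $\AAA\times C_2$ (center $C_2\times C_2$). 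To see it must be on the list, note that if the generator of $G/G'$ acts on $G'$ by an inner automorphism, one can adjust it to centralize $G'$, and then its square lies in $\bZ(G')=M\cong C_2$; whether that square is trivial or not gives $\AAA\times C_2$ or $\AAA*_{C_2}C_4$, while the outer-action case yields the two double covers of $\Sy_n$.

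The paper sidesteps this by never attempting the abstract classification. It splits on $C/M:=\bC_{G/M}(G'/M)$: when $C=M$ it gets $G/M\cong\Sy_n$ and reads off that $G$ is a stem extension of $\Sy_n$, hence one of $\SSS^\pm$; when $C>M$ it gets $G/M\cong\Al_n\times C_2$, shows $G$ is the central product of $G'\cong\AAA$ with $C\cong C_4$ or $C_2\times C_2$ over $M$, and kills \emph{both} shapes at once by the class-count bound $4k(\AAA)=k(G'\times C)\le k(M)\,k(G)=2k(\SSS)$, contradicting Lemma~\ref{comparing number of classes of An and Sn}. Your own class-counting would in fact also dispose of $\AAA*_{C_2}C_4$ (one checks $k(\AAA*_{C_2}C_4)=2k(\AAA)$ as well), so the gap is repairable; but as written you have silently dropped a case, and the classification claim itself would still need an argument ($\Out(\AAA)\cong C_2$, behaviour of the extension class, etc.) rather than a bare assertion. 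You should either supply that classification with the missing fourth group, or adopt the paper's centralizer dichotomy, which avoids classifying extensions of $\AAA$ altogether.
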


\begin{proof} By the hypothesis that $\CC G\cong\CC \SSS$, we have $|G|=2n!$
and as $\SSS$ has two linear characters, we also have $|G:G'|=2$.

First we claim that $G'=G''$. Assume not. Then $H:=G/G'$ is a group
whose commutator subgroup $H'$ is non-trivial abelian of index 2.
Now the induction of any non-principal (linear) character of $H'$ to
$H$ must be irreducible and $2$-dimensional. This is not possible
since $\SSS$ with $n\geq 5$ does not have any irreducible character
of degree 2. Thus $G'=G''$.

As $G'=G''$ and $G'$ is non-trivial, one can choose a normal
subgroup $M$ of $G$ such that $M< G'$ and $$G'/M\cong S^k,$$ where
$S$ is a non-abelian simple group and $S^k$ is a chief factor of
$G$. Let
$$C/M:=\bC_{G/M}(G'/M)\:.$$

(A) First we consider the case $C=M$. Then $G'/M$ is the unique
minimal normal subgroup of $G/M$. Therefore $G/M$ permutes the
direct factors of $G'/M$ (which is isomorphic to $S^k$). It then
follows that $k\leq 2$ as $|G:G'|=2$. Invoking
Proposition~\ref{symmetric1}, we deduce that $k=1$ and thus
$G'/M\cong S$. Therefore, $G'/M$ is the socle of $G/M$. As
$\cd^\ast(G/M)\subseteq\cd^\ast(\SSS)$, Proposition~\ref{proposition
main} then implies that $G'/M\cong \Al_n$. Thus $G/M\cong \Sy_n$ and
also $|M|=2$. In particular, $M$ is central in $G$ and therefore
$M\subseteq \bZ(G)\cap G'$. We conclude that $G$ is one of the two
double covers of $\Sy_n$, as desired.

\medskip

(B) It remains to consider the case $C>M$. Since $C/M
\vartriangleleft G/M$ and $\bZ(G'/M)=1$, it follows that $G'$ is a
proper subgroup of $G'C$. As $|G:G'|=2$, we then deduce that $G=G'C$
and hence
$$G/M=G'/M\times C/M \text{ where } C/M\cong C_2\:.$$
Applying Proposition~\ref{symmetric3}, we obtain that $k=1$ and $S$
is isomorphic to $\Al_n$. In other words, $G'/M\cong \Al_n$. So
$G/M\cong \Al_n\times C_2$. Comparing the orders, we get $|M|=2$ and
so $M\subseteq \bZ(G)$. As $M\leq G'=G''$, it follows that $M\leq
\bZ(G')\cap G''$, which in turn implies that $G'$ is the double
cover of $\Al_n$. We have proved that
\begin{equation}\label{equation2}G'\cong\AAA.\end{equation} Moreover, as $C/M\cong
C_2$ and $|M|=2$, we have \begin{equation}\label{equation3}C\cong
C_4 \text{ or } C_2\times C_2.\end{equation}

Now we claim that $G$ is an (internal) central product of $G'$ and
$C$ with amalgamated central subgroup $M$. To see this, let $x,y\in
G'$ and $c\in C$. Then the facts $C/M=\bC_{G/M}(G'/M)$ and $M\leq
\bZ(G)$ imply
$$[x,y]^c=[x^c,y^c]=[xm_1,ym_2]=[x,y],$$ for some $m_1,m_2\in M$.
Therefore, $C$ centralizes $G'=G''$ and the claim follows. This
claim together with~\eqref{equation2} and~\eqref{equation3} yield
$$4k(\AAA)=4k(G')=k(G'\times C)\leq k(M)k(G)=2k(G),$$ where the inequality
comes from the well-known result that $k(X)\leq k(N)k(X/N)$ for $N$
a normal subgroup of $X$ (see~\cite{Nagao} for instance). Since
$\cd^\ast(G)=\cd^\ast(\SSS)$, we have $k(G)=k(\SSS)$. It follows
that $2k(\AAA)\leq k(\SSS)$. This however contradicts
Lemma~\ref{comparing number of classes of An and Sn} and the theorem
is now completely proved.
\end{proof}


\end{document}